\newcommand{\Z}{\mathbb{Z}}
\newcommand{\R}{\mathbb{R}}
\newcommand{\C}{\mathbb{C}}
\newcommand{\F}{\mathcal{F}}
\newcommand{\X}{\mathcal{X}}
\newcommand{\FF}{\mathcal{F}}
\newcommand{\RR}{\mathbb{R}}
\numberwithin{equation}{section}
\newtheorem{thm}{Theorem}[section]
\newtheorem{defn}[thm]{Definition}
\newtheorem{prop}[thm]{Proposition}
\newtheorem{lemma}[thm]{Lemma}
\date{}
\begin{document}

\title{Nonhomogeneous  Boundary Value Problems of Nonlinear Schr\"odinger Equations in a Half Plane\footnotetext{2010 {\it Mathematics Subject Classification}.
Primary 35Q55.}}

\author{ Yu Ran \\
{\small  Department of Mathematics} \\ {\small China Jiliang University}
\\ {\small
258 Xueyuan Road, Hangzhou 310018, China}
\\
\\Shu-Ming  Sun
\\ {\small  Department of Mathematics} \\ {\small Virginia
Polytechnic Institute and State University}
\\ {\small
Blacksburg, Virginia 24061, USA}\\
\\
Bing-Yu Zhang \\ {\small Department of
Mathematical Sciences} \\ {\small University of Cincinnati}
\\ {\small Cincinnati,
Ohio 45221, USA}
}

\baselineskip=14.5pt

\maketitle



\begin{abstract}
This paper discusses the initial-boundary-value problems (IBVPs)  of  nonlinear Schr\"odinger equations posed  in a half plane $\RR\times \RR^+$ with nonhomogeneous
Dirichlet boundary conditions. For any given $s \ge 0$,
if the initial data $\varphi (x, y)$ are in Sobolev space $H^s(\RR\times \R^+) $
with the boundary data $ h ( x, t) $ in an optimal space ${\cal H}^s(0,T)$  as defined in the introduction, which is slightly weaker than the space $$H^{(2s+1)/4}_{t} \big ([0, T]; L_x^2(\RR ) \big ) \cap L^2_t \big ( [ 0, T]; H^{s+ 1/2} _x ( \RR ) \big ),$$ the local well-posedness of the IBVP in
$ C ( [0, T] ; H^s ( \RR\times \RR^+ ) )$ is proved.
The global well-posedness is also discussed for $s = 1$.
The main idea of the proof is to derive a boundary integral operator for the corresponding nonhomogeneous boundary
condition and obtain the Strichartz estimates for this operator. The results presented in the paper hold also  for  the IBVP posed in a half
space $ \RR^{n}\times \RR^+$ with any $n>1$.

\end{abstract}

\allowdisplaybreaks

\section{Introduction}

\setcounter{equation}{0}

The paper concerns an
initial-boundary-value problems (IBVP) of the nonlinear Schr\"odinger
(NLS) equation posed in a half plane $( x, y)\in \mathbb{R} \times \mathbb{R}^+$ (or a half space $( x, y)\in \mathbb{R}^n \times \mathbb{R}^+$),
\begin{equation}\label{1.1}
\left \{ \begin{array}{l}
           iu_t + u_{xx} + u_{yy}  + \lambda |u|^{p-2} u =0, \qquad x\in \mathbb{R}, y \in \mathbb{R}^+ \, ,  \ t\in \mathbb{R
           },\\
           u(x,y, 0) =\varphi (x, y ), \qquad
           u(x, 0,t) =h (x, t),
         \end{array} \right.
         \end{equation}
where $p\geq 3$ is a  constant (note that we only study the case for $p \geq 3$ although many of the results in the paper still hold for $p > 2$; also in $\RR^n \times \RR^+$, $u_{xx}$ is replaced by the Laplacian $\Delta_x u$ in $\RR^n$). The NLS equation has many applications and
 is derived from various research areas ranging from theory of optics to theory of water waves. In particular,
the NLS equation is a  popular model used recently to explain the formation of rogue waves observed in sea or oceans
 \cite{CHA2011, pere1983}. Our motivation to study the problem  posed in a half plane originated from the water-wave problems. Let the free surface occupy the region $\{ (x, y ) \ | \ y > 0\} $ and a wave-maker be placed at $y = 0$. To predict the wave motion generated by the wave-maker, it is necessary to specify the initial and boundary conditions that make the surface waves propagate into the region $ y > 0$, which gives an initial boundary value problem in a half plane $ y > 0$.

Here, we only consider the mathematical problem of the NLS equations (\ref{1.1})  and  concentrate
on its  \emph{well-posedness} in the classical Sobolev space $H^s(\mathbb{R} \times \mathbb{R}^+)$ (or $H^s(\mathbb{R}^n \times \mathbb{R}^+)$).
If $(x, y ) \in \mathbb{R}^n \times \mathbb{R}$, i.e., $x\in \mathbb{R}^n$ for simplicity, this is a
pure initial value problem (IVP),  i.e.,
\begin{equation}
 \label{1.2}
 iu_t + \Delta_x u  +\lambda |u|^{p-2}u =0, \  u(x,0)=\varphi (x), \qquad x \in \mathbb{R}^n, \ t\in \mathbb{R}\, .
\end{equation}
In the past several decades, the IVPs of
the NLS equations posed on  $\mathbb{R}^{n}$
have been extensively studied for their  well-posedness in the space  $H^s (\mathbb{R}^n )$ (cf. \cite{bourgain-1,
bourgain-2, cazenave, CFH2011,
caz-weiss, gini-velo,kato-1,kato-2,tsu-1, tsu-4} and the
references therein). In particular, significant progresses have been made for
the well-posedness of the problem with lower regularity.
In contrast to the IVP of the NLS equation, the
study of the IBVP (\ref{1.1}) with
nonhomogeneous boundary condition has fallen behind (cf.
\cite{brezis,bu1994,bu2000, bu2001,
bu2005,bc,holmer,kam,strauss,tsu-1,tsu-2,tsu-3} and the references
therein). The low regularity feature for the IBVP (\ref{1.1}) without $x$ (i.e., in a half line) was first studied by Holmer \cite{holmer}. He showed that for $0 \leq s < 1/2$ with $ 3\leq p < \frac{6-2s}{1-2s}$ or for $ 1/2 < s < 3/2$ with $ 3 \leq p < \infty$, there exists a $T > 0 $ such that if $\varphi \in H^s ( \RR^+ ) $ and $ h \in H^{\frac{2s+1}{4}}_{loc} (\RR^+ )$, then the IBVP (\ref{1.1}) in a half line
has a solution in $C([0, T]; H^s (\RR^+ ))$. The well-posedness of the IBVP (\ref{1.1}) in a half line  with solutions
in the space $C([0, T]; H^s (\RR^+ ))$ for $s \geq 0$ is also discussed in \cite{Sun_NSE_1d} using boundary integral  operator method (see also \cite{ET} for the  recent  study  of the IBVP (\ref{1.1}) in a half line).  For higher dimensional cases, Audiard \cite{Audiard-1} investigated the non-homogeneous boundary value problem for some general linear dispersive equations and obtained a-priori estimates and well-posedness for the pure boundary value problems
associated to such class of equations or the IBVPs of the linear Schr\"odinger
equations in a half-space, while \cite{Audiard-2, Audiard-3} study the corresponding IBVPs for the nonlinear Schr\"odinger
equations in spatial domains that are the exterior of non-trapping compact or star-shaped obstacles. Here, we remark that the results obtained here are for the IBVPs of nonlinear Schr\"odinger
equations in a half-space with optimal boundary data and the method used is the boundary integral operator method, which are totally different from those in \cite{Audiard-1, Audiard-2, Audiard-3}.

In this paper, we study the IBVP (\ref{1.1}) for its local and global
well-posedness. Here, the same idea and method can be used to study  (\ref{1.1}) for $ x\in \RR^n$.
Our goal is to advance the study of the IBVP (\ref{1.1}) to the same level as that for the IVP (\ref{1.2}).
 In order to have the solution of (\ref{1.1}) in  the space $C([0, T]; H^s (\RR\times \RR^+ ))$ with $s \geq 0$,  while the initial  the initial  value $\varphi ( x, y) $ is chosen from  $H^s (\RR\times \RR^+ )$,  the  optimal choice of the function space for the boundary data $h ( x, t)$ needs some discussion.  Based upon the scaling argument,   it seems natural  (see \cite{Audiard-1,Audiard-2}) to choose  $h$ from the space
\begin{equation}\label{1.2.1}
 {\cal W}^s (0,T):=H^{\frac{2s+1}{4}}_{t} \left (0,T; L^2_x (\R)\right )\cap L^2 _{t} \left (0,T; H^{s+\frac12}_x(\R)\right ).
\end{equation}
It turns out, however,
 that the space ${\cal W}^s (0,T)$  is not the optimal choice. Indeed, as  the trace of the solution  $v$ of the linear Schr\"odinger equation in $\RR^2$,
$$
i v_t + v_{xx} + v_{yy} = 0 \, ,\quad v ( x, y, 0 ) = \varphi ( x, y) , \quad (x, y ) \in \RR^2 , \quad t \in \RR\, ,
$$
 most likely does not belong to the space ${\cal W}^s (0,T)$  (see the discussion in Section 2),
we will show that the solution $v$  possesses the  following
 trace property (see Lemma \ref{lemm2.2} in Section 2):

\smallskip
\emph{If $\varphi \in H^s (\RR^2) $, then  the trace of the solution $v\in C(\R;H^s(\RR^2))$,  $ v_b ( x, t) = v (x, 0, t) $,  belongs to the
space
$$
{\cal H}^s (\R^2) :=  \left \{ w\in L^2 (\R^2): \quad  \big ( 1+ | \lambda| + |\xi| ^2 |\big )^{s /2}\big | \lambda + |\xi| ^2 \big |^{1/4} \FF[ w] ( \lambda, \xi ) \in L^2(\RR^2 )\right \}
$$
with
\[ \|w\|_{{\cal H}^s(\R^2)}:= \left \| \big ( 1+ | \lambda| + |\xi| ^2 |\big )^{s /2}\big | \lambda + |\xi| ^2 \big |^{1/4} \FF[ w] ( \lambda, \xi ) \right \|_{L^2(\RR^2 )},\]
  where  $\FF[ w]$ stands for the Fourier transform of $w$ with respect to both $x$ and $t$.}
   The choose of $h$ from ${\cal H}^s (\R^2)$ is optimal and the following definition of the well-posedness for the IBVP (\ref{1.1}) is  then natural.

\begin{defn} \label{defi1.1}(Well-posedness) For any given $ s \in \RR, T > 0$, the IBVP (\ref{1.1}) is locally well-posed in $H^s ( \RR \times \RR^+ ) $
if for any constant $ \mu > 0$ there is a $ T^* \in (0, T ] $ such that for $\varphi \in H^s ( \RR \times \RR^+ ) $ and $ h \in {\cal H}^s (0,T)$
satisfying
$$
\| \varphi \|_{ H^s ( \RR \times \RR^+ )} +  \|  h \|_{{ \cal H}^s (0,T)} \leq \mu
$$
and some compatibility conditions, the IBVP (\ref{1.1}) has a unique solution in $C ( [0, T^*]; H^s ( \RR \times \RR^+ ))$, which continuously depends upon $( \varphi , h)$ in the corresponding spaces. If  $T^*$ can be chosen independent of $r$, then the IBVP (\ref{1.1}) is globally well-posed. Here, ${\cal H}^s(0,T)$ denotes the restriction space of ${\cal H}^s(\R^2)$ to the domain $\R\times (0,T)$.
\end{defn}

For small $s\geq 0 $, we need to address the meaning of solutions of the IBVP (\ref{1.1})
satisfying the initial and boundary conditions. We use the following definition \cite{Sun_CUC_Evo}.

\begin{defn} \label{defi1.2}For given $ s < 2$ and $T > 0$, let  $\varphi \in H^s ( \RR \times \RR^+ ) $ and $ h \in{ \cal H}^s (0,T)$.  Then $u (x, y, t) $ is called a mild solution of  the IBVP (\ref{1.1}) if there is a sequence
$$
u_n \in C \left ( [0, T]; H^2 ( \RR \times \RR^+ )\right ) \cap C^1 \left ( [0, T]; L^2 ( \RR \times \RR^+ )\right )\, ,\quad n = 1,2,\dots ,
$$
satisfying the following properties:
\begin{enumerate}
\item $u _n $ is a solution of (\ref{1.1}) in $L^ 2 ( \RR \times \RR^+ )$ for $ 0 \leq t \leq T$;
\item $u_n\rightarrow u $ in $C ( [0, T]; H^s ( \RR \times \RR^+ ))$ as $n \rightarrow \infty$;
\item as $n\rightarrow \infty$, $u_n ( x, y , 0) \rightarrow \varphi (x, y)$ in $H^s ( \RR \times \RR^+ )$ and
$
u _n ( x, 0 , t) \rightarrow h ( x, t)\ \mbox{in}\ {\cal H}^s (0,T)\, .
$
\end{enumerate}
\end{defn}

To study the well-posedness of the IBVP (\ref{1.1}), we first note that if $h = 0$, i.e.,
the homogeneous boundary condition, then the problem can be reduced to a special case of the IVP (\ref{1.2}) using an odd extension of the solution to $ y < 0$. Thus, the well-posedness result of (\ref{1.1}) with $h =0$ follows from the known result for (\ref{1.2}).
In this paper, we consider the IBVP of (\ref{1.1}) with $ h \not \equiv 0$.
We say that $(q, r) $ is an admissible pair if $ (1/q) + ( 1/r) = (1/2)$. Also, denote
$W^{s, p}_{xy} ( \RR \times \RR^+ ) $ as the classical Sobolev space in $L^p$-norm.
Thus, $W^{ s, 2 }_{xy}  = H^s_{xy}$.
The main result obtained in this paper can be summarized as follows.

\begin{thm}\label{theo1.3}
For given $ s \geq 0$, $T > 0$ and $\mu >0$, assume $\varphi \in H^s (\R\times \R^+) $ and $h\in {\cal H}^s(0,T)$ satisfying
$$
\| \varphi \|_{ H^s ( \RR \times \RR^+ )} +  \|  h \|_{ {\cal H}^s (0,T)} \leq \mu
$$
and  some natural  compatibility conditions.
\begin{itemize}
\item[(i)]
For $0 \le s <1$ and $ 3 \leq p < (4-2s)/(1-s) $, there exist a $T^*>0$ and a suitable admissible pair $(q,r)$ such that (\ref{1.1}) is locally well-posed for
$$
u \in C_t \left([0,T^*]; \, H^s_{xy}(\R \times \R^+)\right) \bigcap L^q_t \left([0,T^*]; \, W^{s,r}_{xy}(\R \times \R^+)\right) \, .
$$
Moreover, if $ p = (4-2s)/(1-s)$, $\mu $ must be small.
\item[(ii)]
For $s=1$, there exists a $ T^* >0$ such that  for any admissible pair $( q, r)$ with $ 2 < q < \infty$, (\ref{1.1}) is locally well-posed for $u \in  C_t \left([0,T^*]; \, H_{xy}^1(\R \times \R^+)\right) \bigcap L^q_t \left([0,T^*]; \, W^{1,r}_{xy}(\R \times \R^+)\right)$.
\item[(iii)]
If $s > 1$ and $3 \le p < \infty$ (assume $1<s \le p-1$ for $s\in\Z$ or $1<[s]\le p-2$ for $s\notin\Z$  when $p$ is not an even integer), there exists a $ T^* >0$ such that (\ref{1.1}) is locally well-posed for $u \in  C_t \left([0,T^*]; \, H^s_{xy}(\R \times \R^+)\right)$, where
$[s]$ is the largest integer that is less than or equal to $s$.
\end{itemize}
\end{thm}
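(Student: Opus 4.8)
The plan is to recast (\ref{1.1}) as a fixed-point equation built from an explicit representation of the associated linear IBVP. First I would solve the linear problem $iv_t+\Delta v=f$, $v(\cdot,0)=\varphi$, $v|_{y=0}=h$ by superposing three pieces: (a) the free $\RR^2$-evolution $W_{\R}(t)\varphi^{*}$, where $\varphi^{*}\in H^s(\RR^2)$ is a bounded extension of $\varphi$; (b) the Duhamel term $\int_0^t W_{\R}(t-\tau)f(\tau)\,d\tau$; and (c) a \emph{boundary carrier operator} $W_b(t)g$, the solution of the linear Schr\"odinger equation on $\RR\times\RR^+$ with zero initial data, zero forcing and Dirichlet datum $g$. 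Taking the Fourier transform in $(x,t)$ and selecting the solution of the resulting $y$-ODE that stays bounded as $y\to+\infty$ gives $\widehat{W_b g}(\xi,y,\lambda)=e^{-\omega(\xi,\lambda)y}\,\FF[g](\lambda,\xi)$, where $\omega^2=\lambda+|\xi|^2$, $\re\,\omega\ge0$, and the sign of $\im\,\omega$ on $\{\lambda+|\xi|^2<0\}$ is fixed by a radiation (limiting-absorption) condition matching the Schr\"odinger group; the set $\{\lambda+|\xi|^2>0\}$ is the \emph{elliptic} region (exponential $y$-decay) and $\{\lambda+|\xi|^2<0\}$ the \emph{dispersive} region (oscillation in $y$). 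The NLS solution is then sought as a fixed point of
\[
\Gamma(u):=\Big(W_{\R}(t)\varphi^{*}+i\lambda\!\int_0^t\! W_{\R}(t-\tau)\big(|u|^{p-2}u\big)(\tau)\,d\tau\Big)\Big|_{y>0}+W_b(t)\big(h-v_b[u]\big),
\]
where $v_b[u]$ is the trace at $y=0$ of the bracketed function; any fixed point solves (\ref{1.1}), and the initial and boundary conditions hold once $h-v_b[u]$ obeys the compatibility conditions of Definition \ref{defi1.1}, which is exactly what makes $W_b(t)(h-v_b[u])$ have zero initial data and the construction well defined.

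The contraction is run on a ball of $Z:=C_t([0,T^*];H^s_{xy})\cap L^q_t([0,T^*];W^{s,r}_{xy})$ (for $s>1$ one takes $Z=C_t([0,T^*];H^s_{xy})$), using: (1) the classical homogeneous and inhomogeneous Strichartz estimates for $W_{\R}$ on $\RR^2$; (2) Lemma \ref{lemm2.2} and its Duhamel analogue, giving $\|v_b[u]\|_{{\cal H}^s(0,T)}\lesssim \|\varphi\|_{H^s(\R\times\R^+)}+\big\||u|^{p-2}u\big\|_{L^{q'}_tW^{s,r'}_{xy}}$; (3) a \emph{Strichartz estimate for the boundary operator}, $\|W_b g\|_{Z}\lesssim\|g\|_{{\cal H}^s(0,T)}$; and (4) the nonlinear estimate $\big\||u|^{p-2}u\big\|_{L^{q'}_tW^{s,r'}_{xy}}\lesssim T^{\theta}\|u\|_{L^q_tW^{s,r}_{xy}}^{\,p-1}$ for some $\theta>0$, which holds for $3\le p<(4-2s)/(1-s)$ when $0\le s<1$ (at the endpoint $p=(4-2s)/(1-s)$ one only gets $\theta=0$, forcing $\mu$ small), for all admissible $(q,r)$ when $s=1$ (via $W^{1,r}_{xy}\hookrightarrow L^m_{xy}$ and H\"older in $t$, two-dimensional energy-subcriticality always leaving room for $\theta>0$), and in the pure $C_tH^s$ setting when $s>1$ (via the algebra property of $H^s(\RR^2)$ for $s>1$ and the fractional chain rule, whose cost is the arithmetic restriction linking $[s]$ and $p$ when $|u|^{p-2}u$ is not smooth). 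The companion Lipschitz bound for $\Gamma(u)-\Gamma(\tilde u)$ follows in the same way, so a standard Banach fixed-point argument gives, for $T^*$ depending only on $\mu$, a unique solution in $Z$ depending continuously on $(\varphi,h)$; approximating $(\varphi,h)$ by smooth compatible data and passing to the limit shows it is a mild solution in the sense of Definition \ref{defi1.2}.

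The main obstacle is estimate (3), and this is where the optimal space ${\cal H}^s(0,T)$ — rather than the scaling space ${\cal W}^s(0,T)$ of (\ref{1.2.1}) — is forced on us: by Lemma \ref{lemm2.2} the datum $h-v_b[u]$ fed to $W_b$ lies in ${\cal H}^s$ but in general not in the smaller space ${\cal W}^s$, so $W_b$ must be bounded on all of ${\cal H}^s(0,T)$. On the elliptic region the multiplier $e^{-\omega y}$ gains $y$-decay and $\|W_b g\|_{C_tH^s}$, $\|W_b g\|_{L^q_tW^{s,r}}$ are controlled directly: write the norms in frequency, integrate out $y$ against $e^{-2(\re\,\omega)y}$, and use the weight $\big|\lambda+|\xi|^2\big|^{1/4}$ of ${\cal H}^s$ together with a Sobolev embedding in $t$. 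The delicate part is the dispersive region, where $e^{-\omega y}=e^{\mp i\sqrt{\big|\lambda+|\xi|^2\big|}\,y}$ is a superposition of dispersive plane waves; here I would change variables so that the restriction of $W_b g$ to this region is recognized as a modulation of the free Schr\"odinger propagator applied to a function whose $L^2$-norm, after the Jacobian, is comparable to $\|g\|_{{\cal H}^s}$, and then invoke the $\RR^2$ Strichartz estimates of (1). Carrying the weight $\big|\lambda+|\xi|^2\big|^{1/4}$ and the low-frequency behaviour correctly through this reduction — so that one lands exactly in ${\cal H}^s$, not in a strictly smaller space — is the technical heart, and is the two-dimensional analogue of the half-line computation in \cite{Sun_NSE_1d}. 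Once (3) is secured, parts (ii) and (iii) need no new linear input: they follow by commuting derivatives through the same estimates, the only extra bookkeeping being the algebra/chain-rule analysis of the nonlinearity, which produces the stated conditions on $p$, $s$, $[s]$ in (iii).
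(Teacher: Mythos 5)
Your overall architecture coincides with the paper's (decomposition into free evolution, Duhamel term and a boundary carrier $W_b$, fixed point in $C_tH^s\cap L^q_tW^{s,r}$, the trace lemma forcing ${\cal H}^s$, and the recognition that on the oscillatory region $W_b$ is a reparametrized free propagator), but there is a genuine gap in your treatment of estimate (3), and the difficulty is misallocated. The region you call the ``technical heart'' --- the dispersive region $\lambda+|\xi|^2<0$ --- is in fact the easy half: after exactly the change of variables you describe, that piece is $W_{\R^2}(t)\Phi_h$ for a function $\Phi_h$ with $\|\Phi_h\|_{H^s}\lesssim\|h\|_{{\cal H}^s}$ (the Jacobian produces the weight $|\lambda+|\xi|^2|^{1/4}$), and classical Strichartz finishes it in a few lines. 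The genuinely delicate half is the elliptic (evanescent) piece, with kernel $e^{-y\sqrt{\xi^2-\beta}}$, which you propose to handle ``directly'' by integrating out $y$ against $e^{-2(\re\,\omega)y}$ and using Sobolev embedding in $t$. That procedure is $L^2$-based in $(x,y)$ and can at best give bounds with $r=2$; it does not produce the mixed-norm bounds $\|W_{b_2}[h]\|_{L^q_t W^{s,r}_{xy}}\lesssim\|h\|_{{\cal H}^s}$ with $r>2$ that your contraction (and parts (i), (ii) of the theorem) require, and the evanescent operator is not a semigroup, so these bounds do not follow from any abstract Strichartz machinery either. The paper has to work for them: it writes $W_{b_2}[h]=\mathcal{K}(t)\Psi_h$, runs a $TT^*$ argument with the kernel $\widetilde K_{t}$, proves the dispersive bound $|\widetilde K_t|\lesssim |t|^{-1}$ (which needs the nontrivial oscillatory-integral estimate $\bigl|\int_0^{\infty}e^{i\eta^2t-a\eta}\,d\eta\bigr|\le C t^{-1/2}$ uniformly in $a>0$), an $L^2$ bound via a Laplace-transform-type inequality, interpolation, and the Hardy--Littlewood--Sobolev inequality. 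Without an argument of this strength for the elliptic region, your step (3), and hence the whole fixed-point scheme for $0\le s\le 1$, is not closed.

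A secondary, smaller issue: you invoke a ``Duhamel analogue'' of Lemma \ref{lemm2.2}, namely $\|v_b[u]\|_{{\cal H}^s(0,T)}\lesssim\|\varphi\|_{H^s}+\||u|^{p-2}u\|_{L^{q'}_tW^{s,r'}_{xy}}$, but you neither prove it nor need anything that strong. The paper sidesteps the trace of the Duhamel term entirely: it proves the commutation identity $W_b[\Phi_f|_{y=0}]=i\int_0^t W_b\bigl[(W_{\R^2}f(\tau))|_{y=0}\bigr](t-\tau)\,d\tau$ and then uses Minkowski's inequality together with the homogeneous boundary estimate and Lemma \ref{lemm2.2} to get the bound by $\|f\|_{L^1_tH^s_{xy}}$, which is what the nonlinear estimates (with the H\"older-in-$t$ gain $T^{\theta}$) actually consume. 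If you keep your formulation, you owe a proof of the retarded trace estimate (a Christ--Kiselev-type statement in the ${\cal H}^s$ norm); if you adopt the paper's identity, that burden disappears. The nonlinear estimates, the $s=1$ and $s>1$ bookkeeping, and the continuous-dependence argument in your sketch are otherwise in line with the paper.
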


Here, we note that for $0\leq s \leq 1$, the local well-posedness
result presented in  Theorem \ref{theo1.3} is conditional since the condition on $u$ with
\begin{equation} \label{x-1.1}u\in L^q_t \left([0,T^*]; \, W^{s,r}_{xy}(\R \times \R^+)\right)\end{equation}
is needed to guarantee
the uniqueness. We may ask whether it is possible to remove the condition (\ref{x-1.1}).
If the condition (\ref{x-1.1})  can be removed, the corresponding well-posedness
will be called unconditional.
\begin{thm} {\rm (unconditional well-posedness)}  \label{theo1.4}
If $0\leq s\leq 1$ is given, then the condition (\ref{x-1.1})
can be removed, and therefore the  well-posedness is unconditional.
\end{thm}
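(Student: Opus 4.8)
The strategy is to show that, for $0\le s\le 1$, the auxiliary condition (\ref{x-1.1}) is not an extra requirement at all: every mild solution of (\ref{1.1}) lying in $C([0,T^*];H^s(\R\times\R^+))$ automatically belongs to $L^q_tW^{s,r}_{xy}$ for the admissible pair $(q,r)$ produced by Theorem \ref{theo1.3}, at least on a subinterval whose length depends only on the size of the data. Granting this, if $u_1,u_2\in C([0,T^*];H^s)$ are two mild solutions of (\ref{1.1}) with the same data, then on that subinterval both lie in the space $C_tH^s\cap L^q_tW^{s,r}_{xy}$ in which Theorem \ref{theo1.3} already asserts uniqueness, so $u_1\equiv u_2$ there, and a continuation argument propagates this to all of $[0,T^*]$ — which is exactly Theorem \ref{theo1.4}. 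Thus everything reduces to the following claim: \emph{if $0\le s\le 1$, $u\in C([0,T^*];H^s(\R\times\R^+))$ is a mild solution of (\ref{1.1}) with $\|\varphi\|_{H^s}+\|h\|_{{\cal H}^s(0,T^*)}\le\mu$, and $(q,r)$ is as in Theorem \ref{theo1.3}, then there is $T_0=T_0(\mu)>0$ with $u\in L^q\big([0,\min(T_0,T^*)];W^{s,r}_{xy}(\R\times\R^+)\big)$.}

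To prove the claim I would invoke Definition \ref{defi1.2} to pick strong solutions $u_n\in C([0,T^*];H^2)\cap C^1([0,T^*];L^2)$ with $u_n\to u$ in $C([0,T^*];H^s)$, $u_n(\cdot,\cdot,0)\to\varphi$ in $H^s$ and $u_n(\cdot,0,\cdot)\to h$ in ${\cal H}^s(0,T^*)$; in particular the data of $u_n$ has size at most $\mu+1$ for all large $n$. The proof of Theorem \ref{theo1.3} is carried out by a contraction mapping on a ball $B$ of $C_tH^s\cap L^q_tW^{s,r}_{xy}$ over an interval $[0,T_0]$ with $T_0=T_0(\mu+1)>0$, and its fixed point is the unique element of $B$ solving the associated integral equation (free half-plane evolution $+$ boundary integral operator term $+$ Duhamel term). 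Each $u_n$, being classical, satisfies that same integral equation, and, being smooth, has arbitrarily small $L^q_tW^{s,r}_{xy}$-norm on $[0,\varepsilon]$ for $\varepsilon$ small; hence $u_n$ enters $B$ near $t=0$, and a standard ``uniqueness propagates'' argument forces $u_n$ to coincide with the fixed point on all of $[0,T_0]\cap[0,T^*]$. Consequently $\|u_n\|_{L^q([0,\min(T_0,T^*)];W^{s,r}_{xy})}\le C(\mu+1)$ uniformly in $n$; passing to a weak-$*$ limit along a subsequence and using $u_n\to u$ strongly in $C([0,T^*];H^s)$ to identify the limit gives the claim.

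Given the claim, two mild solutions $u_1,u_2$ with identical data agree on $[0,T_1]$, $T_1:=\min(T_0(\mu),T^*)>0$, by Theorem \ref{theo1.3}. To reach $[0,T^*]$, set $I=\{t\in[0,T^*]:u_1\equiv u_2\text{ on }[0,t]\}$. Then $I$ is closed by continuity of $u_1,u_2$ in $C_tH^s$, and $I$ is open: if $t_*\in I$, then $u_i(\cdot,\cdot,t_*)$ is a common initial datum, the trace of $u_i$ on $y=0$ over $[t_*,T^*]$ is the restriction of $h$ (so the compatibility condition at $t_*$ holds automatically) and has ${\cal H}^s$-norm $\le\mu$, and $\|u_i(\cdot,\cdot,t_*)\|_{H^s}\le M:=\sup_{0\le t\le T^*}\|u_i(t)\|_{H^s}<\infty$; applying the claim and Theorem \ref{theo1.3} to the problem restarted at $t_*$ yields $u_1\equiv u_2$ on $[t_*,t_*+\delta]$ for some $\delta>0$ depending only on $M+\mu$ (in the critical case $p=(4-2s)/(1-s)$ the restart time $\delta$ depends on the profile of $u_i(\cdot,\cdot,t_*)$ rather than its norm, but is still positive). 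Hence $I=[0,T^*]$, $u_1\equiv u_2$, the condition (\ref{x-1.1}) is superfluous, and the well-posedness is unconditional.

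The one nonroutine point is the uniform bound $\|u_n\|_{L^q([0,\min(T_0,T^*)];W^{s,r}_{xy})}\le C(\mu+1)$: one must check that the a priori estimate underlying Theorem \ref{theo1.3} is uniform over data in a bounded set and that it pins down \emph{every} strong solution, not only the one delivered by the fixed-point construction; this is the sole place where the proof of Theorem \ref{theo1.3} has to be reexamined, and it rests on the uniqueness of the fixed point inside $B$ together with the fact that a smooth solution starts inside $B$. Alternatively, one may bypass the uniqueness statement of Theorem \ref{theo1.3} entirely and estimate $w=u_1-u_2$ directly: since $w$ has zero initial and boundary data, odd reflection in $y$ of the \emph{forcing} (only the forcing, not $u_1,u_2$ themselves, need be reflected, so $h\ne0$ causes no trouble) gives $w(t)=i\lambda\int_0^t S_D(t-\tau)\big(|u_1|^{p-2}u_1-|u_2|^{p-2}u_2\big)(\tau)\,d\tau$ with $S_D$ the Dirichlet Schr\"odinger propagator on $\R\times\R^+$; combining the Strichartz estimates of Section 2, the Sobolev embedding $H^s(\R\times\R^+)\hookrightarrow L^{2/(1-s)}_{xy}$ (and $H^1\hookrightarrow L^\rho_{xy}$ for all $\rho<\infty$ when $s=1$), and H\"older in time over a short interval $[0,\delta]$, the strict subcriticality $p<(4-2s)/(1-s)$ produces a gain $\delta^{\theta}$ with $\theta>0$, whence $\|w\|_{L^q([0,\delta];W^{s,r}_{xy})}\le\tfrac12\|w\|_{L^q([0,\delta];W^{s,r}_{xy})}$ and $w\equiv0$ on $[0,\delta]$; one then iterates as above.
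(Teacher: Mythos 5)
Your main argument is essentially sound and, once unwound, runs on the same engine as the paper's proof, which is only a citation: the paper invokes the conditional-to-unconditional framework of \cite{Sun_CUC_Evo} (together with the observation that the regularity theory of \cite{cazenave} persists for the IBVP), and the content of that framework is exactly your key step --- each smooth approximant $u_n$ furnished by Definition \ref{defi1.2} lies in the auxiliary Strichartz class and therefore, by the conditional uniqueness of Theorem \ref{theo1.3} applied to its own data $(\varphi_n,h_n)$, coincides with the conditional solution on a time interval whose length depends only on the size of the data. You correctly identify the only genuinely delicate ingredients there: that an $H^2$-level strong solution satisfies the integral equation (\ref{2.6}), and that the representation can be restarted at a later time (both rest on the linear-uniqueness manipulation the paper uses to establish (\ref{3.40})). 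Where your packaging is weaker than necessary is the second half: after the identification, extracting a weak-$*$ limit in $L^q_tW^{s,r}_{xy}$ and then invoking the uniqueness clause of Theorem \ref{theo1.3} quietly presupposes that the mild solution $u$ itself solves (\ref{2.6}), a limit passage in the nonlinear term that you never carry out. It is cleaner, and is precisely the route of \cite{Sun_CUC_Evo}, to skip the weak limit altogether: since $u_n$ equals the conditional solution for $(\varphi_n,h_n)$ and the data converge, Proposition \ref{prop4.5} gives $u_n\to u^*$ in $\X^s_{T_1}$, where $u^*$ is the conditional solution for $(\varphi,h)$; combined with $u_n\to u$ in $C_tH^s$ this yields $u=u^*$, and the same for any other mild solution with the same data, so uniqueness (and membership in the auxiliary class) follows at once. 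Your continuation argument to cover all of $[0,T^*]$ is fine in the subcritical range, and you are right to flag the critical case separately.

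The closing ``alternative'', however, is not a genuine alternative and should not be presented as one. The inequality $\|w\|_{L^q([0,\delta];W^{s,r}_{xy})}\le\tfrac12\|w\|_{L^q([0,\delta];W^{s,r}_{xy})}$ is vacuous unless one already knows that this norm is finite, i.e. unless your Claim (equivalently, condition (\ref{x-1.1})) is already in force; so it can at best replace the appeal to Theorem \ref{theo1.3}, not the Claim itself. Moreover, a Gronwall/Kato estimate that uses only the $C_tH^s$ bounds of $u_1,u_2$ does not close over the whole range $3\le p<(4-2s)/(1-s)$: placing the factors $|u_i|^{p-2}$ via $H^s\hookrightarrow L^{2/(1-s)}_{xy}$ and $w$ in $L^2_{xy}$ forces the dual Strichartz exponent to satisfy $(p-2)(1-s)<1$, i.e. $p<(3-2s)/(1-s)$ (already at $s=0$, $p=3$ the spatial exponent degenerates to the forbidden endpoint), which is why unconditional uniqueness cannot be obtained by that direct estimate alone and why the detour through the smooth approximants of Definition \ref{defi1.2} is needed.
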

By \cite{Sun_CUC_Evo}, Theorems \ref{theo1.3} and \ref{theo1.4}  show that the solution obtained is a mild
solution defined in Definition \ref{defi1.2}.
Next, we state the global well-posedness result
for (\ref{1.1}).
\begin{thm}\label{theo1.5}
Assume that either
$p \geq 3$ if $\lambda < 0$ or $3 \leq p \leq (10/3)$ if $  \lambda >
0.$ The IBVP (\ref{1.1}) is globally well-posed in
$H^1 (\RR\times \R^+)$ with
$\varphi \in H^1 (\mathbb{R}\times \mathbb{R}^+)$ and $ h\in H^1_{t, loc} \left ( \RR ; L_x^2 (\RR )\right  ) \cap L_t ^2 \left ( \RR; H^{\frac{3}{2}}_{x} (\mathbb{R})\right )$.
\end{thm}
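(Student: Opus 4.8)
The plan is to upgrade the local theory of Theorem~\ref{theo1.3}(ii) to a global statement by establishing an \emph{a priori} bound on $\|u(\cdot,\cdot,t)\|_{H^1(\R\times\R^+)}$ that remains finite on every bounded time interval, and then iterating the local existence result. Observe first that on each finite interval $[0,T']$ the hypothesis $h\in H^1_t(\R;L^2_x)\cap L^2_t(\R;H^{3/2}_x)$ embeds into the space ${\cal H}^1(0,T')$ required by Theorem~\ref{theo1.3}(ii) (comparing Fourier weights, $(1+|\lambda|+|\xi|^2)^{1/2}|\lambda+|\xi|^2|^{1/4}\lesssim 1+|\lambda|+|\xi|^{3/2}$), so the local theory applies and yields, for data of size $\le\mu$, a unique solution $u\in C_t([0,T^*];H^1_{xy})\cap L^q_t([0,T^*];W^{1,r}_{xy})$ on an interval of length $T^*=T^*(\mu)>0$, depending continuously on $(\varphi,h)$. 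Since $T^*$ depends on the data only through $\|\varphi\|_{H^1}+\|h\|_{{\cal H}^1}$, it suffices to prove: for every $T'>0$ there is a finite $M$, depending only on $T'$ and on the norms of $\varphi$ and $h$ over $[0,T']$, with $\sup_{0\le t\le T'}\|u(t)\|_{H^1(\R\times\R^+)}\le M$ for every $H^1$-solution on $[0,T']$. Feeding $M$ back into the local existence time and concatenating the local solutions then produces the global solution on $[0,T]$ for arbitrary $T$, and continuous dependence is inherited from the local statement together with the uniformity of $M$ on bounded sets of data.

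To obtain the \emph{a priori} bound I would first remove the inhomogeneity at $y=0$. Let $\psi=\psi(x,y,t)$ solve the linear Schr\"odinger equation $i\psi_t+\Delta\psi=0$ in $\R\times\R^+$ with $\psi(x,0,t)=h(x,t)$ --- that is, $\psi$ is the boundary integral operator of Section~2 applied to $h$ --- so that, by the Strichartz and trace estimates for that operator, $\psi$ and $\nabla\psi$ lie in the appropriate $C_tH^1_{xy}\cap L^q_tW^{1,r}_{xy}$ classes and the Neumann trace $\psi_y(\cdot,0,\cdot)$ lies in $L^2_{x,t}$, all with norms controlled by $\|h\|$ over $[0,T']$ (here the slightly stronger-in-time input $h\in H^1_tL^2_x\cap L^2_tH^{3/2}_x$ is what is needed). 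Then $v:=u-\psi$ solves the problem with \emph{homogeneous} Dirichlet data,
\begin{equation*}
iv_t+\Delta v+\lambda\,|v+\psi|^{p-2}(v+\psi)=0,\qquad v(x,0,t)=0,\qquad v(x,y,0)=\varphi(x,y)-\psi(x,y,0),
\end{equation*}
which one may study directly on the half-plane or, after an odd reflection across $y=0$ (consistent since $w\mapsto|w|^{p-2}w$ commutes with odd reflection), as an IVP on $\R^2$ with a $\psi$-dependent perturbation of the pure power nonlinearity.

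Next come the mass and energy estimates for $v$. Because $v$ (hence $v_t$) vanishes on $y=0$, differentiating $\|v\|_{L^2}^2$ and the energy $\tfrac12\|\nabla v\|_{L^2}^2-\tfrac{\lambda}{p}\|v+\psi\|_{L^p}^p$ in $t$ and integrating by parts kills all the boundary flux terms; using the equation and $\im\int|w|^{p-2}w\,\bar w=0$ one gets
\begin{gather*}
\frac{d}{dt}\|v\|_{L^2}^2=2\lambda\,\im\!\int\!\!\int\overline{\psi}\,|v+\psi|^{p-2}(v+\psi),\\
\frac{d}{dt}\Big(\tfrac12\|\nabla v\|_{L^2}^2-\tfrac{\lambda}{p}\|v+\psi\|_{L^p}^p\Big)=-\lambda\,\re\!\int\!\!\int\overline{\psi_t}\,|v+\psi|^{p-2}(v+\psi).
\end{gather*}
The right-hand side of the first identity is $\lesssim\|\psi\|_{L^p}\,\|v+\psi\|_{L^p}^{p-1}$; in the second, one writes $\psi_t=i\Delta\psi$ and integrates by parts once more, producing a bulk term $\lesssim\|\nabla\psi\|_{L^4}\,\|v+\psi\|_{L^{4(p-2)}}^{p-2}\,\|\nabla(v+\psi)\|_{L^2}$ and a boundary term $\lesssim\|\psi_y(\cdot,0,t)\|_{L^2_x}\,\|h(\cdot,t)\|_{L^{2(p-1)}_x}^{p-1}$, neither of which involves a derivative of $\psi$ stronger than $\nabla\psi$. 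When $\lambda<0$ the potential energy has the good sign, so $\|\nabla v\|_{L^2}^2$ is controlled by the energy plus $C\|\psi\|_{L^p}^p$; estimating the right-hand sides above by the two-dimensional Gagliardo--Nirenberg inequality and using that the $\psi$-factors belong to $L^4_t$ (hence to $L^1_t$) on $[0,T']$, a Gronwall argument closes the bound for every $p\ge3$ (for large $p$ one additionally invokes that the defocusing energy-subcritical $H^1$-theory on $\R^2$ is itself globally well posed, the boundary being only a perturbation). When $\lambda>0$ the potential energy must instead be absorbed into the kinetic energy via $\|v\|_{L^p}^p\lesssim\|v\|_{L^2}^{4-p}\|\nabla v\|_{L^2}^{p-2}$ together with the (finite, Gronwall-controlled) growth of $\|v\|_{L^2}$; tracking exponents, the power of the energy on the right-hand side works out to $(3p-2)/8$, which is $\le 1$ precisely when $p\le 10/3$ --- this is where the focusing restriction $3\le p\le 10/3$ comes from. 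Undoing $u=v+\psi$ (and the reflection) then gives the desired bound on $\|u(t)\|_{H^1(\R\times\R^+)}$.

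The main obstacle is precisely the control of these boundary-driven corrections --- equivalently, of the Neumann trace $u_y(\cdot,0,\cdot)$ of the solution, or of $\psi$, $\psi_t$ and $\psi_y|_{y=0}$ after the lift. They must be bounded by the data plus quantities whose growth in $\sup_{[0,T']}\|u(t)\|_{H^1}$ is mild enough for a Gronwall inequality to close, and this is exactly where the sharp linear trace (Kato-smoothing) estimate for the boundary integral operator is used --- the linear prototype being the trace property of the free evolution recorded in the introduction, together with the half-derivative gain for the Neumann trace --- where the slightly stronger-in-time hypothesis $h\in H^1_tL^2_x$ enters, and where the threshold $p\le 10/3$ for $\lambda>0$ is forced. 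Once these linear boundary estimates and the local theory are in place, the remainder is the bookkeeping of the mass--energy inequality just sketched.
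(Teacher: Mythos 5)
Your overall scaffolding (local theory plus an a priori $H^1$ bound plus iteration) is the same as the paper's, but the mechanism you propose for the a priori bound is genuinely different from Section \ref{sect5}, and as written it has two real gaps. The paper never lifts the boundary data: it works with $u$ itself, derives the three identities of Lemma \ref{lemm5.1}, and uses the third, Rellich-type identity \eqref{5.3} -- integrated over $x,y,t$ -- to bound the Neumann trace $\|u_y(\cdot,0,\cdot)\|_{L^2_{xt}}$ by $\|h_x\|_{L^2_{xt}}^2$ plus terms like $\|u\|_{L^2}\|u_y\|_{L^2}$ and boundary-data norms; this is what controls the boundary fluxes in the mass identity \eqref{5.1} and the energy identity \eqref{5.2} (Proposition \ref{prop5.2}). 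Your route instead subtracts $\psi=W_b[h]$ and needs the \emph{linear} hidden-regularity (Kato-smoothing) estimate $\psi_y(\cdot,0,\cdot)\in L^2_{xt}$ with norm controlled by the data. You assert this ``by the Strichartz and trace estimates for that operator,'' but Proposition \ref{prop3.2} gives no Neumann-trace bound for $W_b$, and nothing in the paper supplies one; you yourself flag it as ``the main obstacle'' and then treat it as available. That estimate would have to be proved (it is not a consequence of \eqref{3.9}), whereas the paper's identity \eqref{5.3} sidesteps it entirely by producing the Neumann trace of the nonlinear solution from quantities already under control.

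The second gap is quantitative and concerns the defocusing range. Because you treat $\lambda|v+\psi|^{p-2}(v+\psi)$ perturbatively, your energy balance contains bulk coupling terms such as $\int|\nabla\psi|\,|u|^{p-2}|\nabla u|$ and, in the mass balance, $\int|\psi|\,|u|^{p-1}$. Even when $\lambda<0$, so that the energy controls both $\|\nabla u\|_{L^2}$ and $\|u\|_{L^p}$, Gagliardo--Nirenberg turns these terms into powers of the energy of order $(3p-2)/8$ (your own exponent), which exceeds $1$ once $p>10/3$; choosing other H\"older splittings does not help for large $p$. So your Gronwall argument cannot close for all $p\ge 3$ in the defocusing case, which is precisely what Theorem \ref{theo1.5} claims, and the remark that one may ``invoke global well-posedness of defocusing NLS on $\R^2$, the boundary being only a perturbation'' is not an argument: the $\psi$-coupling terms are neither small nor sign-definite, and controlling them \emph{is} the problem. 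The paper avoids this because in \eqref{5.2} the nonlinear term enters as an exact time derivative of $\tfrac{2\lambda}{p}\|u\|_{L^p}^p$ with no bulk remainder, so for $\lambda<0$ no restriction on $p$ appears and the threshold $p\le 10/3$ arises only in the focusing case (with the endpoint $p=10/3$ handled by partitioning $[0,T]$ so that $\|h\|_{L^2_{xt}}$ is small on each piece). A minor further point: the odd-reflection remark is inconsistent in your setting, since $\psi$ neither vanishes at $y=0$ nor is odd, though this is harmless because you also allow working directly on the half-plane.
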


The idea of proof for the local well-posedness is based on the method introduced in \cite{Sun_KdV_qp,co-kenig}
for studying the IBVP of the Korteweg-de Vries equation (see also \cite{holmer} for  the study of the IBVP of the nonlinear Schr\"odinger equation).
The key step is to study  the following nonhomogeneous boundary value problem:
\begin{equation}\label{1.1-1}
\left \{ \begin{array}{l}
           iu_t + u_{xx} + u_{yy}  =0, \qquad x\in \mathbb{R}, \quad y \in \mathbb{R}^+ \, ,  \quad  t\in \mathbb{R},\\
           u(x,y, 0) =0, \qquad
           u(x, 0,t) =h (x, t).
         \end{array} \right.
         \end{equation}
Applying the Laplace transform to the linear equation  with respect to $t$  leads us to an explicit integral representation of the solution $u$ in terms
of the boundary data $h$, called  the boundary integral operator,
\[ u(x,y,t):= [W_{b}h](x,y,t) .\]
It will be shown that for any given $s\geq 0$, $T>0$  and $h\in {\cal H}^s(0,T)$,  the IBVP (\ref{1.1-1}) admits a unique solution $u\in C([0,T]; H^s(\R\times \R^+))$ and, moreover,
\begin{equation}\label{1.1-2}  \| u\|_{L^q (0,T; W^{s,r} (\R\times \R^+))} \leq C\| h\|_{{\cal H}^s (0,T)} \end{equation}
for any admissible  pair $(q, r)$ with $1/q+1/r=1/2$, $2\leq r< \infty$.
 With the boundary integral operator  in hand, we  then convert  the IBVP (\ref{1.1}) into an equivalent  nonlinear  integral equation whose (local)
 well-posedness can be established by using
 contraction mapping  principle thanks to the Strichartz estimate  (\ref{1.1-2})  of the boundary integral operator $W_b$.
The unconditional result is obtained in a similar way as that in \cite{Sun_CUC_Evo}. Here, again we emphasize that the same idea and method can be applied for (\ref{1.1}) with $ x\in \RR^n$.

The paper is organized as follows. Section 2 gives the formulation of the problem.
Section 3 deals with the representations of the solution operators and various Strichartz
estimates for these operators.
 The local well-posedness of the IBVP is proved in Section 4.
The global well-posedness of (\ref{1.1}) is provided in Section 5.

\section{Formulation of the problem}

To consider the well-posedness of (\ref{1.1}), i.e.,
\begin{equation}
\left\{
\begin{array}{lrl}
i u_t + u_{xx} + u_{yy} + g = 0\, , \quad & \qquad (x,y,t) & \in \R \times \R^+ \times (0,T) ; \\
u(x,y,0) = \varphi(x,y)\, , \quad & \qquad (x,y) & \in \R \times \R^+ \, ;\\
u(x,0,t) = h(x,t)\, , \quad & \qquad (x,t) & \in \R \times (0,T)\, ,
\end{array}
        \right. \label{2.1}
\end{equation}
where $g(x,y,t) := \lambda |u(x,y,t)|^{p-2} u(x,y,t)$ with  $ p \ge 3$,  we will reformulate it into an integral equation.
If $T> 0 $ is small, the well-posedness of (\ref{2.1}) is local and the solution $u$ in $C \left([0,T]; H^s (\R \times \R^+) \right)$ will be discussed.

First, we decompose (\ref{2.1}) into three relatively simple problems:
\[ u(x,y,t)= \tilde \mu (x,y,t)+ w(x,y,t)+ z(x,y,t)\, ,\]
where  $\tilde \mu $ solves
\begin{equation}
\left\{
\begin{array}{lrl}
i \tilde \mu_t + \tilde \mu_{xx} + \tilde \mu_{yy}  = 0\, , \quad & \qquad (x,y,t) & \in \R \times \R^+ \times (0,T) \, , \\
\tilde \mu(x,y,0) =0\, , \quad & \qquad (x,y) & \in \R \times \R^+ \, ,\\
\tilde \mu(x,0,t) = h(x,t)\, , \quad & \qquad (x,t) & \in \R \times (0,T),
\end{array}
        \right. \label{2.1-1}
\end{equation}
$w$ solves
\begin{equation}
\left\{
\begin{array}{lrl}
i w_t + w_{xx} + w_{yy}  = 0\, , \quad & \qquad (x,y,t) & \in \R \times \R^+ \times (0,T) \, , \\
w(x,y,0) = \varphi(x,y)\, , \quad & \qquad (x,y) & \in \R \times \R^+ \, ,\\
w(x,0,t) = 0\, , \quad & \qquad (x,t) & \in \R \times (0,T),
\end{array}
        \right. \label{2.1-2}
\end{equation}
and
$z$ solves
\begin{equation}
\left\{
\begin{array}{lrl}
i z_t + z_{xx} + z_{yy} + g = 0\, , \quad & \qquad (x,y,t) & \in \R \times \R^+ \times (0,T) \, , \\
z(x,y,0) = 0\, , \quad & \qquad (x,y) & \in \R \times \R^+ \, ,\\
z(x,0,t) = 0\, , \quad & \qquad (x,t) & \in \R \times (0,T)\, .
\end{array}
        \right. \label{2.1-3}
\end{equation}
The solution $\tilde \mu $ of the IBVP (\ref{2.1-1})  is denoted by
\[ \tilde \mu  (x,y,t):=W_b [h](x,y,t),\]
where the operator  $W_b$ is called the {\em boundary integral operator}.
Let $\phi: \, \R^2 \to \C$ be an extension of $\varphi$ such that
$\phi(x,y)=\varphi(x,y)$ for $y\ge0$ preserving the function spaces used later and the relative norms.
Let the solution of the linear initial value problem
\begin{equation}
            \left\{
            \begin{array}{lrl}
                i v_t + v_{xx} + v_{yy} = 0 \, ,\quad & \qquad (x,y,t) & \in \R^2 \times (0,T)\, ; \\
                v(x,y,0) = \phi(x,y) \, ,\quad & \qquad (x,y) & \in \R^2\, ,
            \end{array}
            \right. \label{2.2}
\end{equation}
be $v = W_{\R^2} (t) \phi$. Here, $W_{\R^2}(t)$ is a $C_0$-semigroup for the infinitesimal generator $i\Delta$ in $\R^2$. If $v_b $ is the trace of $v=W_{\R^2}(t) \phi $ at $y=0$,
 the solution $w$ of the IBVP (\ref{2.1-2}) can be expressed as
\[  w= W_{\R^2} (t) \phi  - W_b [v_b]   .\]
Also, if $f: \, \R^2 \times [0,T] \to \C$ is an extension of $g$  such that $f(x,y,t) = g(x,y,t)$ for $y\ge0$ preserving the function spaces and the relative norms, the  solution of the initial value problem
        \begin{equation}
            \left\{
            \begin{array}{lrl}
                i q_t + q_{xx} + q_{yy} + f = 0 \, ,\quad & \qquad (x,y,t) & \in \R^2 \times (0,T) \, ; \\
                q(x,y,0) = 0 \, ,\quad & \qquad (x,y) & \in \R^2\, ,
            \end{array}
            \right. \label{2.3}
        \end{equation}
can be found by $\displaystyle q= \Phi_f := i \int_0^t W_{\R^2}(t-\tau) f(\tau) \, d\tau$ as that in \cite{cazenave}.  If
$ q_b $ is the trace of $   i \int_0^t W_{\R^2}(t-\tau) f(\tau) \, d\tau$ at $  y=0$,
the solution $z$ of the IBVP (\ref{2.1-3}) can be expressed as
\[  z=i \int_0^t W_{\R^2}(t-\tau) f(\tau) \, d\tau  - W_b[ q_b ]  .\]
Consequently,  the IBVP  (\ref{2.1}) is  then transformed to  the following integral
equation,
    \begin{eqnarray}
       u(x,y,t) &=& W_b \left[h - W_{\R^2}(\cdot) \phi\, \Big |_{y=0} - i \left(\int_0^{\cdot} W_{\R^2}(\cdot-\tau) f(\tau) \, d\tau \right)\bigg | _{y=0} \right](x,y,t) \nonumber \\
       && \qquad + W_{\R^2}(t) \phi (x,y) + \, i \left(\int_0^t W_{\R^2}(t-\tau) f(\tau) \, d\tau \right) (x,y)\, . \label{2.6}
    \end{eqnarray}
From now on, we study the solution of (\ref{2.6}). In general, the solution of (\ref{2.6}) is a solution of (\ref{2.1}) in the sense of distribution.
If the initial and boundary data are smooth enough with compatibility conditions (here, the compatibility conditions are already in the operator $W_b$ if the part in \eqref{2.6} with $W_b$ is smooth at $t=y=0$) and the solutions of (\ref{2.6}) are in $C([0, T]; H^2 (\mathbb{R}\times \mathbb{R}^+)) \cap
C^1([0, T]; L^2 (\mathbb{R}\times \mathbb{R}^+))$, then it is straightforward to check that such solutions of (\ref{2.6}) are strong solutions of
(\ref{2.1}). For general initial and boundary data, we will only consider the solutions of (\ref{2.6}), which is consistent with the mild solutions of (\ref{2.1}) defined in Definition \ref{defi1.2}.

\medskip
One of the advantages to study the integral equation (\ref{2.6}) is that one only needs  to study the boundary integral operator $W_b$ since the operator $W_{\R^2}$
has been well-studied in literature.  However,   a necessary step to make this approach work is to find an appropriate function space ${\cal G}^s (0,T) $ such that
\begin{itemize}
\item[(i).] for any $h\in {\cal G}^s ( 0,T)$, $W_b[h] \in C([0,T]; H^s (\R\times \R^+)) $ and
\begin{equation}\label{x-1} \| W_b [h]\|_{C([0,T]; H^s (\R\times \R^+))}\leq C\|h\|_{{\cal G}^s (0,T)}; \end{equation}
\item[(ii).] for any $\phi \in H^s(\R^2)$, the trace $v_b$ of $v=W_{\R^2} (t) \phi $ at $y=0$ belongs to the space ${\cal G}^s (0,T)$ and
\begin{equation}\label{x-2} \|v_b\|_{{\cal G}^s (0,T)} \leq C\|\phi \|_{H^s \left (\R^2\right )} .\end{equation}
\end{itemize}

Based upon scaling argument  and the study of the IBVP of nonlinear Schr\"odinger equation on a half line \cite{holmer, Audiard-1},  it seems  reasonable to  choose
$ {\cal G}^s  (0,T)={\cal W}^s (0,T) $ defined in \eqref{1.2.1}.
Indeed,  it has been shown  in \cite{Audiard-1} that (\ref{x-1}) holds  for $s=1$  with
$ {\cal G}^s  (0,T)= {\cal W}^s (0,T).$
However, {\it does the trace of $W_{\R^2} \phi $ belong to the space ${\cal W}^s (0,T)?$} Next lemma shows that the trace
of $W_{\R^2}(t)\phi $ belongs to the space ${\cal H}^s (0,T)$.
\begin{lemma} \label{lemm2.2}   Let $s\geq 0$ be given.  For any $\phi \in H^s(\R^2)$,  if $v_b:=\left. W_{\R^2} (t)\phi \right |_{y=0}$, then
 there exists a constant $C>0$ such  that
$\| v_b\|_{{\cal H}^s(\R^2)} \leq C\|\phi \|_{H^s (\R^2)} $.
\end{lemma}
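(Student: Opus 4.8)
The plan is to compute the space--time Fourier transform of the trace $v_b$ explicitly and then to read off the weighted $L^2$ bound by an elementary change of variables; the degenerate weight $\big|\lambda+|\xi|^2\big|^{1/4}$ in the definition of ${\cal H}^s(\R^2)$ is exactly what makes this work. First I would reduce to $\phi\in{\cal S}(\R^2)$: the asserted inequality is linear in $\phi$ and ${\cal S}(\R^2)$ is dense in $H^s(\R^2)$, while for Schwartz data $v=W_{\R^2}(t)\phi$ is smooth and Schwartz in $(x,y)$ for every $t$, so the trace $v_b(x,t)=v(x,0,t)$ is classically defined, is a tempered distribution in $(x,t)$, and the Fourier manipulations below are legitimate; the general case then follows by density once the estimate is established.

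Write $\widehat v(\xi,\eta,t)=e^{-it(\xi^2+\eta^2)}\widehat\phi(\xi,\eta)$ for the Fourier transform of $v$ in the space variables, so that
\[
 v_b(x,t)=\frac{1}{(2\pi)^2}\int_{\R}e^{ix\xi}\left(\int_{\R}e^{-it(\xi^2+\eta^2)}\widehat\phi(\xi,\eta)\,d\eta\right)d\xi .
\]
Taking the Fourier transform in $(x,t)$ and carrying out the $x$- and $t$-integrations, which produce the delta measures $\delta(\xi-\xi')$ and $\delta(\lambda+\xi^2+\eta^2)$ (equivalently, the substitution $\eta\mapsto\eta^{2}$ identifies the inner integral as an inverse $t$-Fourier transform), one obtains, with $\kappa:=\big|\lambda+|\xi|^2\big|^{1/2}$,
\[
 \FF[v_b](\lambda,\xi)=\frac{\widehat\phi(\xi,\kappa)+\widehat\phi(\xi,-\kappa)}{2\kappa}\,\mathbf{1}_{\{\lambda+|\xi|^2\le 0\}} .
\]
On the support of this function $|\lambda|=|\xi|^2+\kappa^2$ and $\big|\lambda+|\xi|^2\big|=\kappa^2$, so that $1+|\lambda|+|\xi|^2$ is comparable to $1+|\xi|^2+\kappa^2$.

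Next I would insert this into the definition of $\|\cdot\|_{{\cal H}^s(\R^2)}$ and, for each fixed $\xi$, change variables via $\lambda=-|\xi|^2-\kappa^2$, $\kappa\in(0,\infty)$, with $|d\lambda|=2\kappa\,d\kappa$. The weight $\big|\lambda+|\xi|^2\big|^{1/2}=\kappa$, the factor $(2\kappa)^{-2}$ from $|\FF[v_b]|^2$, and the Jacobian $2\kappa$ combine to a harmless constant, leaving
\[
 \|v_b\|_{{\cal H}^s(\R^2)}^2\lesssim\int_{\R}\int_0^\infty\big(1+|\xi|^2+\kappa^2\big)^s\,\big|\widehat\phi(\xi,\kappa)+\widehat\phi(\xi,-\kappa)\big|^2\,d\kappa\,d\xi .
\]
Applying $|a+b|^2\le 2|a|^2+2|b|^2$ and folding the $\kappa$-integral back over all of $\R$ (the weight is even in $\kappa$) bounds the right-hand side by $C\int_{\R^2}\big(1+|\xi|^2+|\eta|^2\big)^s|\widehat\phi(\xi,\eta)|^2\,d\xi\,d\eta=C\|\phi\|_{H^s(\R^2)}^2$, which is the claim.

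The step I expect to require the most care is the Fourier computation of the trace: for general $\phi\in H^s(\R^2)$ the inner $\eta$-integral need not converge absolutely and $\FF[v_b]$ carries a $\kappa^{-1}$ singularity along the characteristic set $\lambda+|\xi|^2=0$ (i.e. $\eta=0$), which is why one first works with Schwartz data, and also why the space ${\cal W}^s(0,T)$ of \eqref{1.2.1} is too small: it is the vanishing factor $\big|\lambda+|\xi|^2\big|^{1/4}$ in ${\cal H}^s$ that absorbs this singularity. Finally, the same argument applies verbatim when $x\in\R^n$, with $|\xi|^2$ read as the squared Euclidean norm and $\eta$ as the variable dual to $y$.
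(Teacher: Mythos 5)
Your proof is correct and takes essentially the same route as the paper: both compute the space--time Fourier transform of the trace, obtain the factor $\big|\lambda+|\xi|^2\big|^{-1/2}$ along the characteristic set $\lambda+|\xi|^2=0$, and undo the change of variables $\lambda=-(\xi^2+\kappa^2)$ so that the weight $\big|\lambda+|\xi|^2\big|^{1/4}$ in ${\cal H}^s$ exactly absorbs this singularity and the bound reduces to $\|\phi\|_{H^s(\R^2)}$. The only differences are cosmetic (your explicit reduction to Schwartz data and the delta-function formalism, versus the paper's direct substitution $w=\xi^2+\eta^2$ in the oscillatory integral).
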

\begin{proof}
Since $u(x,y,t)=W_{\R^2} (t) \phi$ can be expressed as
\[ u(x,y,t)=\int _{\R} \int _{\R} e^{i (\xi ^2 +\eta ^2)t} e^{ix\xi+iy\eta} \hat{\phi} (\xi, \eta ) d\eta d\xi, \]
thus, the Fourier transform of $v_b (x,t)$ respect to $x$ is
\begin{align*}
\hat {v}_b (\xi ,t)  = &  \int _{\R} e^{i (\xi ^2 +\eta ^2)t }\hat{\phi } (\xi , \eta)d\eta = \int ^\infty_{\xi^2} e^{i w t } \frac{\hat{\phi } \left (\xi , \sqrt{w -\xi^2} \right ) }{  2\sqrt{w -\xi^2}}d w
 + \int ^\infty_{\xi^2} e^{i w t }\frac{\hat{\phi } \left (\xi , - \sqrt{w -\xi^2} \right )}{ 2\sqrt{w -\xi^2}} d w \\
= & \int ^\infty_{-\infty} e^{-i \lambda t }\zeta_{[\xi^2, \infty)}(-\lambda ) \frac{\hat{\phi } \left (\xi , \sqrt{|\lambda +\xi^2|} \right )}{ 2\sqrt{|\lambda +\xi^2|}} d\lambda+ \int ^\infty_{-\infty} e^{- i \lambda t }\zeta_{[\xi^2, \infty)}(-\lambda)\frac{\hat{\phi } \left (\xi , - \sqrt{|\lambda +\xi^2|} \right ) }{ 2\sqrt{|\lambda +\xi^2|}} d\lambda
\end{align*}
satisfying
\begin{align*}
\|v_b \|_{{\cal H}^s(\R^2)}^2  \leq & \int _{\R}\int_{\R} (1+|\lambda|+\xi ^2) ^s
\zeta_{[\xi^2, \infty)}(-\lambda ) \frac{\left | \hat{\phi } \left (\xi , \sqrt{|\lambda +\xi^2|} \right )\right |^2}{ 4\sqrt{|\lambda +\xi^2|}} d\lambda d\xi\\
& + \int _{\R}\int_{\R} (1+|\lambda|+\xi ^2) ^s
\zeta_{[\xi^2, \infty)}(-\lambda ) \frac{\left | \hat{\phi } \left (\xi , - \sqrt{|\lambda +\xi^2|} \right )\right |^2}{ 4\sqrt{|\lambda +\xi^2|}} d\lambda d\xi\\
\leq &
\int _{\R}\int_{\xi^2}^\infty (1+|\lambda|+\xi ^2) ^s
 \frac{\left | \hat{\phi } \left (\xi , \sqrt{|-\lambda +\xi^2|} \right )\right |^2}{ 4\sqrt{|-\lambda +\xi^2|}} d\lambda d\xi\\
& + \int _{\R}\int_{\xi^2}^\infty (1+|\lambda|+\xi ^2) ^s
\frac{\left | \hat{\phi } \left (\xi , - \sqrt{|-\lambda +\xi^2|} \right )\right |^2}{ 4\sqrt{|-\lambda +\xi^2|}} d\lambda d\xi\\
\leq & C \int _{\R}\int_{\R }   (1+|\xi^2 +\eta^2 |+\xi ^2) ^s  {|\hat{\phi} (\xi, \eta )|^2} d\eta d \xi
 \leq   C  \int _{\R}\int_{\R} (1+\eta ^2+\xi ^2) ^s |\hat{\phi }(\xi, \eta )|^2 d\eta d\xi\, ,
\end{align*}
where $\zeta$ is the characteristic function.
\end{proof}
Proposition \ref{prop3.2}  in  Section  3 shows that (\ref{x-1}) holds with ${\cal G}^s(0,T)={\cal H}^s (0,T)$. Thus the space ${\cal H}^s (0,T)$ is indeed the optimal choice
of $h$.
Obviously, the space ${\cal W}^s (0,T)$ is a subspace of ${\cal H}^s (0,T)$.


\section{Representations and estimates of boundary integral operators}

In this section, we will derive the formulas of the solutions in (\ref{2.1-1})-(\ref{2.1-3}) and obtain the corresponding solution operators and their relevant estimates.
We begin with considering the following nonhomogeneous boundary value problem:
\begin{equation}
        \left\{
        \begin{array}{lrl}
            i u_t + u_{xx} + u_{yy} = 0\, , \quad & \qquad (x,y,t) & \in \R \times \R^+ \times (0,T)\, ; \\
            u(x,y,0) = 0\, , \quad & \qquad (x,y) & \in \R \times \R^+ \, ,\\
            u(x,0,t) = h(x,t) \quad & \qquad (x,t) & \in \R \times (0,T)\, .
        \end{array}
        \right. \label{3.1}
    \end{equation}
We will show that if the boundary condition  $h (x, t) $  is in the space ${\cal H}^s (0,T)$,  the solution $W_b [h]$ is  in $ L^q_t\left([0,T]; \, W^{s,r}_{xy}(\R \times \R^+)\right) \bigcap L^{\infty}_t \left([0,T]; \, H^s_{xy}(\R \times \R^+)\right)$, where $(q,r)$ is an admissible pair, i.e., $\frac{1}{q} + \frac{1}{r} = \frac{1}{2}$.
In the sequel, we use $ \lesssim $ or $\eqsim $ to denote $\leq $ or $ = $ with a difference of a factor of a generic constant.

First, we derive an  explicit formula of solution operator $W_b [h] $.
\begin{prop}\label{prop3.1} 
The solution of (\ref{3.1}) can be expressed by
\begin{eqnarray}
            u(x,y,t) = W_b [h] (x,y,t) &=& \dfrac{1}{\pi^2}\left(\int_{-\infty}^{\infty} e^{i\xi x}\left(\int_0^{\infty} e^{-i(\xi^2-\eta^2)t-y\eta}\eta\widetilde{h}(\xi, -i(\xi^2-\eta^2)) \, d\eta \right.\right. \nonumber \\
            && \left.\left. + \int_0^{\infty} e^{-i(\xi^2+\eta^2)t+iy\eta}\eta\widetilde{h}(\xi, -i(\xi^2+\eta^2)) \, d\eta \right) \, d\xi \right)  \nonumber \\
            &\eqsim& W_{b_1}[h](x,y,t) + W_{b_2}[h](x,y,t) \, , \label{3.2}
\end{eqnarray}
where
\begin{eqnarray}
W_{b_1}[h](x,y,t) &=& \int_{-\infty}^{\infty} \int_0^{\infty} e^{-i(\xi^2+\eta^2)t+i(y\eta+x\xi)}\eta\widetilde{h}(\xi, -i(\xi^2+\eta^2)) \, d\eta \, d\xi \, ,\label{3.3} \\
W_{b_2}[h](x,y,t) &=& \int_{-\infty}^{\infty} \int_0^{\infty} e^{-i(\xi^2-\eta^2)t+ix\xi-y\eta}\eta\widetilde{h}(\xi, -i(\xi^2-\eta^2)) \, d\eta \, d\xi \, . \label{3.4}
\end{eqnarray}
\end{prop}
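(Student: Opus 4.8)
The plan is to produce the representation (\ref{3.2}) by the Laplace transform method and then to verify directly that the resulting expression solves (\ref{3.1}). First I would apply the Fourier transform in $x$ and the Laplace transform in $t$ to (\ref{3.1}). Denoting by $\widetilde{u}(\xi,y,s)$ the combined transform and using $u(x,y,0)=0$, the equation $iu_t+u_{xx}+u_{yy}=0$ turns into the ODE $\widetilde{u}_{yy}=(\xi^2-is)\,\widetilde{u}$ for $y>0$, with $\widetilde{u}(\xi,0,s)=\widetilde{h}(\xi,s)$. Fixing the branch of $w\mapsto\sqrt{w}$ cut along $(-\infty,0]$, one checks that $\mathrm{Re}\sqrt{\xi^2-is}>0$ whenever $\mathrm{Re}\,s>0$, so the transformed solution that stays bounded as $y\to+\infty$ is $\widetilde{u}(\xi,y,s)=\widetilde{h}(\xi,s)\,e^{-\sqrt{\xi^2-is}\,y}$. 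Inverting both transforms gives
\[
u(x,y,t)=\frac{1}{2\pi}\int_{-\infty}^{\infty}e^{i\xi x}\left(\frac{1}{2\pi i}\int_{\gamma-i\infty}^{\gamma+i\infty}e^{st}\,\widetilde{h}(\xi,s)\,e^{-\sqrt{\xi^2-is}\,y}\,ds\right)d\xi
\]
for any $\gamma>0$ exceeding the abscissa of convergence of $\widetilde{h}(\xi,\cdot)$.

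The key step is to deform the inner contour from the line $\{\mathrm{Re}\,s=\gamma\}$ onto the imaginary axis. For each fixed $\xi$ the integrand (for, say, $h$ compactly supported in $t$) is analytic in $\C$ off the induced cut of $\sqrt{\xi^2-is}$, which is the ray $\{s=ib:\ b\le-\xi^2\}$ emanating from the branch point $s=-i\xi^2$; in particular it is analytic in $\mathrm{Re}\,s>0$. Using the decay of the transform $\widetilde h(\xi,\cdot)$ at infinity along horizontal lines, the contour may be pushed to $\mathrm{Re}\,s\to0^+$ (indenting around $s=-i\xi^2$, whose contribution vanishes since the integrand stays bounded there). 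On $s=ib$ with $b>-\xi^2$ one has $\sqrt{\xi^2-is}=\sqrt{\xi^2+b}>0$; on $s=ib$ with $b<-\xi^2$ the limit from the right lies just below the negative real axis, so $\sqrt{\xi^2-is}\to -i\sqrt{-(\xi^2+b)}$ and hence $e^{-\sqrt{\xi^2-is}\,y}\to e^{i\sqrt{-(\xi^2+b)}\,y}$. Substituting $b=\eta^2-\xi^2$, $\eta>0$ (so $s=-i(\xi^2-\eta^2)$, $db=2\eta\,d\eta$) on the first arc and $b=-(\xi^2+\eta^2)$, $\eta>0$ (so $s=-i(\xi^2+\eta^2)$, $db=-2\eta\,d\eta$) on the second produces exactly the two oscillatory integrals $W_{b_2}[h]$ and $W_{b_1}[h]$ of (\ref{3.3}) and (\ref{3.4}), the change-of-variables Jacobian supplying the weight $\eta$; collecting the constants from the two inversions and the Jacobian yields the prefactor, and the $\eqsim$ in (\ref{3.2}) absorbs its exact value.

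Finally I would confirm that the right-hand side of (\ref{3.2}) genuinely solves (\ref{3.1}). Differentiating under the integral sign, each phase $e^{-i(\xi^2+\eta^2)t+i(y\eta+x\xi)}$ and $e^{-i(\xi^2-\eta^2)t+ix\xi-y\eta}$ satisfies $iu_t+u_{xx}+u_{yy}=0$, so the PDE holds; setting $y=0$ and reversing the substitutions (back to the variable $\sigma$ with $s=i\sigma$) reassembles $\tfrac{1}{2\pi}\int_{\R}e^{i\xi x}\big(\tfrac{1}{2\pi}\int_{\R}e^{i\sigma t}\widetilde h(\xi,i\sigma)\,d\sigma\big)\,d\xi$, which is just the Fourier inversion returning $h(x,t)$; and $u(x,y,0)=0$ follows by closing the $s$-contour in the pre-deformation representation to the right, where the integrand is analytic and decaying. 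The main obstacle is the rigorous justification of the contour deformation and of the term-by-term differentiation: this needs quantitative decay of $\widetilde h(\xi,\cdot)$ in the right half-plane and control of the integrals near the branch point $s=-i\xi^2$ (equivalently near $\eta=0$, where the weight $\eta$ fortunately vanishes). I would first establish everything for $h$ smooth, compactly supported in $t$ and Schwartz in $x$, where all integrals converge absolutely and the manipulations are legitimate, and then extend the identity (\ref{3.2}) to general $h\in{\cal H}^s(0,T)$ by density, interpreting the right-hand side through the boundedness estimates of Proposition \ref{prop3.2}.
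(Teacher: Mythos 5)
Your proposal is correct and follows essentially the same route as the paper: Fourier transform in $x$, Laplace transform in $t$, selection of the decaying root of the $y$-ODE, passage of the Bromwich contour to the imaginary axis (the paper's $\gamma\to 0^+$ limit is exactly your contour deformation, with the same splitting at the branch point $s=-i\xi^2$), and the substitutions $b=\eta^2-\xi^2$, $b=-(\xi^2+\eta^2)$ supplying the weight $\eta$ in \eqref{3.3}--\eqref{3.4}. Your added verification of the PDE, boundary, and initial conditions and the smooth-compactly-supported-then-density caveat only strengthen what the paper presents more formally.
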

\begin{proof}
For any fixed $t > 0 $, we apply Fourier transform on both sides of the equation in (\ref{3.1}) with respect to $x\in\R$ so that the equation of $u(x,y,t)$ is converted to the following equation of $\widehat{u^x}(\xi,y,t)$ (here, $\widehat{u^x}(\xi,y,t)$ is the Fourier transform of $u$ with respect to $x$),
\begin{equation}
            \left\{
            \begin{array}{lrl}
                i \widehat{u^x}_t - \xi^2\widehat{u^x} + \widehat{u^x}_{yy} = 0\, , & \qquad (\xi,y,t) & \in \R \times \R^+ \times (0,\infty) \, ;\\
                \widehat{u^x}(\xi,y,0) = 0 \, ,\\
                \widehat{u^x}(\xi,0,t) = \widehat{h^x}(\xi,t)\, .
            \end{array}
            \right. \label{3.5}
\end{equation}
Then, apply Laplace transform on both sides of (\ref{3.5}) with respect to $t>0$ (here $\widehat{u^x}(\xi,y,0) = 0$) with $\widetilde{u}(\xi,y,\omega) = \mathcal{L}_t[\widehat{u^x}](\xi,y,\omega)$ and $\widetilde{h}(\xi,\omega) = \mathcal{L}_t[\widehat{h^x}](\xi,\omega)$, which gives
\begin{equation} 
            \left\{
            \begin{array}{lrl}
                (i\omega-\xi^2)\widetilde{u} + \widetilde{u}_{yy} = 0\, , & \qquad (\xi,y,\omega) & \in \R \times \R^+ \times \C\, ; \\
                \widetilde{u}(\xi,0,\omega) = \widetilde{h}(\xi,\omega)\, .
            \end{array}
            \right. \label{3.6}
\end{equation}
The characteristic equation of (\ref{3.6}) is $z^2+\omega i-\xi^2=0 $ or $z^2 = \xi^2-\eta-i\gamma $, where $\omega=\gamma-\eta i$ with $\gamma>0$ is required in the form of inverse Laplace transform. If $z=re^{i\theta}$ with $\theta=\arg z$, then
$r^2\sin(2\theta)=-\gamma<0$, which implies $ \frac{\pi}{2}< \theta< \pi$ or $\frac{3\pi}{2} < \theta < 2\pi$. Therefore,
 two roots of the characteristic equation are $z_1=re^{i\theta}$ and $z_2=re^{i(\theta+\pi)}$ with $\frac{\pi}{2}<\theta<\pi$.
Since $\tilde{u}\rightarrow 0$ as $y\rightarrow \infty$, $\tilde{u}=c(\xi, \omega)e^{re^{i\theta}y}=c(\xi, \omega)e^{ry(\cos\theta+i\sin\theta)}$ where
$$
r=\sqrt[4]{(\xi^2-\eta)^2+\gamma^2},   \quad \cos2\theta=\frac{\xi^2-\eta}{\sqrt{(\xi^2-\eta)^2+\gamma^2}}\, ,\quad  \sin2\theta=\frac{-\gamma}{\sqrt{(\xi^2-\eta)^2+\gamma^2}}.
$$
If $\gamma\rightarrow0^+$, then $r\rightarrow \sqrt{|\xi^2-\eta|}$. We divide it into two cases.
For $\eta<\xi^2$, $\cos2\theta\rightarrow1$ and $\sin2\theta\rightarrow0^-$ with $\frac{\pi}{2}<\theta<\pi$, which yields $ \cos\theta\rightarrow-1 \, ,  \sin\theta\rightarrow0^+ $ or $ z\rightarrow -\sqrt{\xi^2-\eta}$. Thus,  $\tilde{u}(\xi,y,\omega)=\widetilde{h}(\xi, \omega)e^{-y\sqrt{\xi^2-\eta}}$.
For $\eta>\xi^2$, $\cos2\theta\rightarrow-1$ and $\sin2\theta\rightarrow0^-$, which implies that  $\cos\theta\rightarrow0^- \, , \, \sin\theta\rightarrow1 $ or $ z\rightarrow i\sqrt{\eta-\xi^2}$. Thus,  $\tilde{u}(\xi,y,\omega)=\widetilde{h}(\xi, \omega)e^{iy\sqrt{\eta-\xi^2}}$.

Perform the inverse Laplace transform on $\widetilde{u^x}$ with respect to $\omega$.
If $\gamma\rightarrow 0^+$ with $\omega=\gamma-i\eta$, then
\begin{align*}
            \widehat {u^x}(\xi, y, t) &= \dfrac{1}{2\pi i} \int_{\gamma-i\infty}^{\gamma+i\infty} e^{\omega t}e^{zy}\widetilde{h}(\xi, \omega) \, d\omega \nonumber\\
            &\overset{\gamma\rightarrow 0}{=} \, \, \dfrac{-1}{2\pi}\left(\int_{\infty}^{\xi^2} e^{-i\eta t}e^{iy\sqrt{\eta-\xi^2}}\widetilde{h}(\xi, -i\eta) \, d\eta + \int_{\xi^2}^{-\infty} e^{-i\eta t}e^{-y\sqrt{\xi^2-\eta}}\widetilde{h}(\xi, -i\eta) \, d\eta \right) \nonumber \\
            &= \dfrac{1}{\pi}\left(\int_0^{\infty} e^{-i(\xi^2+\eta^2)t+iy\eta}\eta\widetilde{h}\big (\xi, -i(\xi^2+\eta^2)\big ) \, d\eta + \int_0^{\infty} e^{-i(\xi^2-\eta^2)t-y\eta}\eta\widetilde{h}\big (\xi, -i(\xi^2-\eta^2)\big ) \, d\eta \right)\, ,
\end{align*}
where we have replaced $\eta$ by $(\xi^2+\eta^2)$ in the first integral and $\eta$ by $(\xi^2-\eta^2)$ in the second integral.
Finally, we can take the inverse Fourier transform on $\xi$ to find
        \begin{eqnarray*}
            u(x,y,t) &=& \dfrac{1}{\pi^2}\left(\int_{-\infty}^{\infty} e^{i\xi x} \left(\int_0^{\infty} e^{-i(\xi^2-\eta^2)t-y\eta}\eta\widetilde{h}\big (\xi, -i(\xi^2-\eta^2)\big ) \, d\eta \right.\right. \\
            && \left.\left. + \int_0^{\infty} e^{-i(\xi^2+\eta^2)t+iy\eta}\eta\widetilde{h}\big (\xi, -i(\xi^2+\eta^2)\big ) \, d\eta \right) \, d\xi \right) \\
            &=& W_{b_1}[h](x,y,t) + W_{b_2}[h](x,y,t)\, ,
        \end{eqnarray*}
or $W_b[h](x,y,t) = W_{b_1}[h](x,y,t) + W_{b_2}[h](x,y,t)$ as the solution of (\ref{3.1}).
\end{proof}

To study $W_{b_1}[h](x,y,t)$ and $W_{b_2}[h](x,y,t)$,  rewrite them to
more convenient forms. For $W_{b_1}[h](x,y,t)$, we let $\boldsymbol{x} = (x,y)$ and $\boldsymbol{\xi} = (\xi,\eta) \in \R^2$ and define
\begin{equation*}
        \widehat{\Phi_h}(\boldsymbol{\xi}) = \nu_1(\xi,\eta) =
        \left\{
        \begin{array}{lcl}
            \eta \widetilde{h}\big (\xi, -i(\xi^2+\eta^2)\big ) &, & \eta\ge 0 \, ,\\
            0 &, & \eta<0\, .
        \end{array}
        \right.
\end{equation*}
Then,
\begin{equation}
        W_{b_1}[h](x,y,t) = \int_{-\infty}^{\infty} \int_0^{\infty} e^{-i(\xi^2+\eta^2)t+i(y\eta+x\xi)} \widehat{\Phi_h}(\xi,\eta) \, d\eta \, d\xi = \int_{\R^2} e^{-i|\boldsymbol{\xi}|^2t + i \boldsymbol{\xi} \cdot \boldsymbol{x}}\widehat{\Phi_h}(\boldsymbol{\xi}) \, d \boldsymbol{\xi} \, ,\label{3.7}
\end{equation}
which is exactly the integral solution formula of initial value problem (\ref{2.2}) for the linear Schr\"odinger equation over the whole plane $\R^2$, i.e., $W_{b_1}[h](x,y,t) = W_{\R^2}(t)\Phi_h (x,y)$.
Similarly, if
    \begin{equation*}
        \widehat{\Psi_h}(\xi,\eta) = \nu_2(\xi,\eta) =
        \left\{
        \begin{array}{lcl}
            \eta \widetilde{h}\big (\xi, -i(\xi^2-\eta^2)\big ) &, & \eta\ge 0\, , \\
            0 &, & \eta<0\, ,
        \end{array}
        \right.
    \end{equation*}
then
\begin{equation}
    W_{b_2}[h](x,y,t) = \int_{\R^2} e^{-i(\xi^2-\eta^2)t + ix\xi-y\eta} \widehat{\Psi_h}(\xi,\eta) \, d\eta \, d\xi\, .
    \label{3.8}
\end{equation}
Note that $\widehat{\Psi_h}(\xi,\eta)$ is continuous in $\eta$ at $\eta=0$.

Next, we derive the estimates for the operator $W_b [h]$.
\begin{prop}\label{prop3.2}
For $\frac{1}{q}+\frac{1}{r} = \frac{1}{2}$ with $r\in [2, \infty)$, the following estimates hold,
\begin{align}
\|W_b [h]  & \|_{L^q_t([0,T]; \, W^{s,r}_{xy} (\R \times \R^+))}^2  \nonumber \\
 & \lesssim\int_{-\infty}^{\infty} \int_{-\infty}^{\infty} (1+|\beta|+\xi^2)^s \sqrt{|\beta|} \left(\int_0^{\infty} e^{i(\beta+\xi^2)t} \int_{-\infty}^{\infty} e^{-i\xi x} h(x,t) \, dx \,
 dt \right)^2 \,d\beta \, d\xi \, \nonumber \\ &  \lesssim  \|h\|_{{\cal H}^s(\R^2)}^2 \, .\label{3.9}
 \end{align}
\end{prop}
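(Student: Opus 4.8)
The plan is to use the decomposition $W_b[h]=W_{b_1}[h]+W_{b_2}[h]$ from Proposition~\ref{prop3.1}, to estimate the two pieces separately, and in each case to reduce the bound to a weighted $L^2$ quantity that, after a change of variable, is a constant multiple of the middle integral in \eqref{3.9}.

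For the evolution part $W_{b_1}$, recall from \eqref{3.7} that $W_{b_1}[h]=W_{\R^2}(t)\Phi_h$ with $\widehat{\Phi_h}(\xi,\eta)=\eta\,\widetilde h\big(\xi,-i(\xi^2+\eta^2)\big)$ for $\eta\ge0$ and $0$ otherwise. Since $W_{\R^2}(t)$ is the free Schr\"odinger group on $\R^2$, the derivatives commute with the evolution, and $W^{s,r}_{xy}(\R\times\R^+)$ is a restriction of $W^{s,r}(\R^2)$, the classical Strichartz estimates (see, e.g., \cite{cazenave}) give $\|W_{b_1}[h]\|_{L^q_t([0,T];W^{s,r}_{xy}(\R\times\R^+))}\lesssim\|\Phi_h\|_{H^s(\R^2)}$ for every admissible pair with $2\le r<\infty$. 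It then remains to evaluate $\|\Phi_h\|_{H^s(\R^2)}^2=\int_\R\int_0^\infty(1+\xi^2+\eta^2)^s\,\eta^2\,\big|\widetilde h(\xi,-i(\xi^2+\eta^2))\big|^2\,d\eta\,d\xi$; with the substitution $\beta=\eta^2$ (so that $\eta\,d\eta=\tfrac12\,d\beta$) and the identity $\widetilde h(\xi,-i(\xi^2+\beta))=\int_0^\infty e^{i(\xi^2+\beta)t}\int_\R e^{-i\xi x}h(x,t)\,dx\,dt$, this becomes a constant multiple of the part of the middle integral in \eqref{3.9} with $\beta>0$.

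For the evanescent part $W_{b_2}$, by \eqref{3.8} we have $W_{b_2}[h](x,y,t)=\int_{\R^2}e^{-i(\xi^2-\eta^2)t+ix\xi-y\eta}\,\widehat{\Psi_h}(\xi,\eta)\,d\xi\,d\eta$, with $\widehat{\Psi_h}(\xi,\eta)=\eta\,\widetilde h(\xi,-i(\xi^2-\eta^2))$ for $\eta\ge0$ and $0$ otherwise. This is the restriction to $\{y>0\}$ of the analytic continuation into $\{\im y>0\}$ of the solution of the non-elliptic Schr\"odinger equation $iv_t+v_{xx}-v_{yy}=0$ with datum $\Psi_h$; because the factor $e^{-y\eta}$ is not oscillatory one cannot simply invoke the two-dimensional Strichartz estimates, and separating the evanescent frequency $\eta$ by Minkowski's inequality only recovers the energy bound ($r=2$, $q=\infty$). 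Instead I would argue by a $TT^*$ argument, with $W_{b_2}$ regarded as the operator $\Psi_h\mapsto W_{b_2}[h]$: the Schwartz kernel of $W_{b_2}W_{b_2}^*$ factors as $K=\Big(\int_\R e^{i(x-x')\xi-i\xi^2(t-t')}\,d\xi\Big)\Big(\int_0^\infty e^{-(y+y')\eta+i(t-t')\eta^2}\,d\eta\Big)$; the first factor is a one-dimensional Fresnel integral of modulus $\lesssim|t-t'|^{-1/2}$, while the second is a Fresnel integral damped by $e^{-(y+y')\eta}$, which for $y,y'>0$ has uniformly bounded variation and only improves the convergence, so it too is $\lesssim|t-t'|^{-1/2}$ by van der Corput's lemma; hence $|K|\lesssim|t-t'|^{-1}$, the same pointwise kernel bound as for the free group on $\R^2$. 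The standard $TT^*$ and Hardy--Littlewood--Sobolev machinery then yields $\|W_{b_2}[h]\|_{L^q_t([0,T];W^{s,r}_{xy}(\R\times\R^+))}\lesssim\|\Psi_h\|_{H^s(\R^2)}$ for admissible $(q,r)$ with $2\le r<\infty$, the $s$ derivatives being handled by the remark that applying $\partial_x^a\partial_y^b$ to $W_{b_2}[h]$ amounts to inserting the factor $(i\xi)^a(-\eta)^b$ inside the integral, so that $\|W_{b_2}[h]\|_{W^{s,r}_{xy}}$ is controlled by $\|(1+\xi^2+\eta^2)^{s/2}\widehat{\Psi_h}\|_{L^2}=\|\Psi_h\|_{H^s(\R^2)}$. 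Finally, the substitution $\beta=-\eta^2$ identifies $\|\Psi_h\|_{H^s(\R^2)}^2$ with a constant multiple of the part of the middle integral in \eqref{3.9} with $\beta<0$.

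Adding the two contributions gives the first inequality in \eqref{3.9}. For the second, in the middle integral I would substitute, for each fixed $\xi$, $\lambda=-(\beta+\xi^2)$, so that $e^{i(\beta+\xi^2)t}=e^{-i\lambda t}$, $\sqrt{|\beta|}=|\lambda+\xi^2|^{1/2}$, and $1+|\beta|+\xi^2\eqsim1+|\lambda|+\xi^2$ (since $|\lambda+\xi^2|$ and $|\lambda|$ differ by at most $\xi^2$); recognizing $\int_0^\infty e^{-i\lambda t}\int_\R e^{-i\xi x}h(x,t)\,dx\,dt$ as $\FF[h](\lambda,\xi)$, the middle quantity becomes a constant multiple of $\int_\R\int_\R(1+|\lambda|+\xi^2)^s|\lambda+\xi^2|^{1/2}|\FF[h](\lambda,\xi)|^2\,d\lambda\,d\xi=\|h\|_{\mathcal{H}^s(\R^2)}^2$, which is precisely the reverse of the computation carried out in the proof of Lemma~\ref{lemm2.2}. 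I expect the main obstacle to be the Strichartz bound for the evanescent operator $W_{b_2}$: since $e^{-y\eta}$ is not oscillatory there is no reduction to the known $\R^2$ estimates, and everything rests on the kernel bound $|K|\lesssim|t-t'|^{-1}$, which in turn hinges on estimating the damped Fresnel integral in the normal variable uniformly with respect to $y+y'>0$.
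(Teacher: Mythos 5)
Your proposal follows the paper's proof in all essentials: the same splitting $W_b=W_{b_1}+W_{b_2}$, the classical Strichartz bound for $W_{b_1}=W_{\R^2}(t)\Phi_h$ with the substitution $\beta=\eta^2$, a $TT^*$/duality argument for $W_{b_2}$ resting on the factored kernel bound $|K|\lesssim|t-t'|^{-1}$ followed by the Riesz potential (Hardy--Littlewood--Sobolev) inequality, insertion of $(i\xi)^a(-\eta)^b$ plus interpolation to handle $s>0$, and the change of variable $\lambda=-(\beta+\xi^2)$ with $1+|\lambda+\xi^2|+\xi^2\lesssim 1+|\lambda|+\xi^2$ for the final inequality. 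The only cosmetic differences are that the paper controls the damped Fresnel factor by quoting an estimate from its one-dimensional companion paper where you invoke van der Corput, and that it spells out the $L^2\to L^2$ bound for the damped operator $\widetilde{\mathcal{K}}(t)$ (via $\left\|\int_0^{\infty}e^{-y\eta}f(\eta)\,d\eta\right\|_{L^2(\R^+)}\le\|f\|_{L^2(\R^+)}$), a step your ``standard machinery'' implicitly needs before interpolating to the $L^{r'}\to L^r$ dispersive estimate.
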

\begin{proof}
We need derive the estimates for both $W_{b_1}[h]$ and $W_{b_2}[h]$. We first prove (\ref{3.9}) for $W_{b_1}[h]$.
From the form of $W_{b_1}[h]$ in (\ref{3.3}) and the definition of $\Phi_h$ in (\ref{3.7}), we have
\begin{align}
                    \|\Phi_h\|^2_{H^s} &= \left \|\left(1+\xi^2+\eta^2\right)^{\frac{s}{2}}\widehat{\Phi_h}(\xi,\eta)\right \|^2_{L^2} \nonumber \\
                    &= \int_{-\infty}^{\infty} \int_0^{\infty} \left(1+\xi^2+\eta^2\right)^s \eta^2 \left|\widetilde{h}\big (\xi,-i(\xi^2+\eta^2)\big )\right|^2 \, d\eta \, d\xi \nonumber\\
                    &= \int_{-\infty}^{\infty} \int_0^{\infty} \left(1+\xi^2+\eta^2\right)^s \eta^2 \left|\int_0^{\infty} e^{i(\xi^2+\eta^2)t}\widehat{h^x}(\xi,t) \, dt\right|^2 \, d\eta \, d\xi \nonumber\\
                    &\eqsim \int_{-\infty}^{\infty} \int_0^{\infty} \left(1+\xi^2+\eta^2\right)^s \eta^2 \left|\int_0^{\infty} e^{i(\xi^2+\eta^2)t} \int_{-\infty}^{\infty} e^{-i\xi x} h(x,t) \, dx \, dt\right|^2 \, d\eta \, d\xi \nonumber\\
                    &= \frac{1}{2} \int_{-\infty}^{\infty} \int_0^{\infty} (1+\beta+\xi^2)^s \dfrac{\beta}{\sqrt{\beta}} \left|\int_0^{\infty} e^{i(\beta+\xi^2)t} \int_{-\infty}^{\infty} e^{-i\xi x} h(x,t) \, dx \, dt \right|^2 \, d\beta \, d\xi \nonumber\\
                    &= C \int_{-\infty}^{\infty} \int_0^{\infty} (1+\beta+\xi^2)^s \sqrt{\beta} \left|\int_0^{\infty} \int_{-\infty}^{\infty} e^{i\beta t} e^{i\xi^2t} e^{-i\xi x} h(x,t) \, dx \, dt \right|^2 \, d\beta \, d\xi\label{3.11.5}
\end{align}
(replacing $\eta$ by $\sqrt{\beta}$), and
\begin{equation*}
                    \|W_{b_1}[h]\|_{L^q_t\left(\R^+; \, W^{s,r}_{xy}(\R \times \R^+)\right)} \lesssim \|\Phi_h\|_{H^s_{xy}(\R \times \R^+)}
\end{equation*}
for any admissible pair $(q,r)$ with $r\in[2,\infty)$, by
the fact that $W_{b_1}[h] = W_{\R^2}(t) \Phi_h$ solves (\ref{2.2}) and the Strichartz estimate from \emph{Remark 2.3.8} in \cite{cazenave}.
By combining these two estimates, it is deduced that
\begin{align}
                     \|W_{b_1}[h] & \|_{L^q_t \left([0,T]; \, W^{s,r}_{xy}(\R \times \R^+)\right)} \nonumber \\
                    &\lesssim \left\{\int_{-\infty}^{\infty} \int_0^{\infty} (1+|\beta|+\xi^2)^s \sqrt{|\beta|} \left(\int_0^{\infty} \int_{-\infty}^{\infty} e^{i\left(\beta+\xi^2\right)t} e^{-i\xi x} h(x,t) \, dx \, dt \right)^2 \, d\beta \, d\xi \right\}^{\frac{1}{2}}\, .\label{3.11}
\end{align}

For $W_{b_2}[h]$, we first consider the case $s=0$, i.e., in the space $L^2$.
Several steps are needed to prove that
\begin{equation}
                    \|W_{b_2}[h]\|_{L^q_t \left([0,T]; \, L^r_{xy}(\R \times \R^+)\right)} \lesssim \|\Psi_h\|_{L^2(\R \times \R^+)}\, .\label{3.12}
\end{equation}
We rewrite $W_{b_2}[h]$ from (\ref{3.8}) by
\begin{align*}
 W_{b_2}[h](x,y,t)
                    &= \int_{-\infty}^{\infty} \int_0^{\infty} e^{-i(\xi^2-\eta^2)t+ix\xi-y\eta} \widehat{\Psi}_h(\xi, \eta) \, d\eta \, d\xi = \int_{-\infty}^{\infty} \int_{-\infty}^{\infty} e^{-i(\xi^2-\eta^2)t+ix\xi-y|\eta|} \widehat{\Psi}_h(\xi, \eta) \, d\eta \, d\xi \\
                    &= \int_{-\infty}^{\infty} \int_{-\infty}^{\infty} e^{-i(\xi^2-\eta^2)t+ix\xi-y|\eta|} \left[\int_{-\infty}^{\infty} \int_{-\infty}^{\infty} \Psi_h(\rho, \tau) \cdot e^{-i(\xi\rho+\eta\tau)} \, d\rho \, d\tau \right] \, d\eta \, d\xi \\
                    &= \int_{-\infty}^{\infty} \int_{-\infty}^{\infty} \Psi_h(\rho, \tau) \cdot \left[\int_{-\infty}^{\infty} \int_{-\infty}^{\infty} e^{-i(\xi^2-\eta^2)t+ix\xi-y|\eta|-i(\xi\rho+\eta\tau)} \, d\eta \, d\xi \right] \, d\rho \, d\tau\, .
\end{align*}
After denoting
\begin{align}
                     K_t (x,y,\rho,\tau) & = \int_{-\infty}^{\infty} \int_{-\infty}^{\infty} e^{-i(\xi^2-\eta^2)t+ix\xi-y|\eta|-i(\xi\rho+\eta\tau)} \, d\eta \, d\xi \nonumber \\
                    & = \left(\int_{-\infty}^{\infty} e^{-i\xi^2t+ix\xi-i\xi\rho} \, d\xi \right) \cdot \left(\int_{-\infty}^{\infty} e^{i\eta^2t-y|\eta|-i\eta\tau} \, d\eta \right)\, ,\label{3.13}
\end{align}
we can write $W_{b_2}[h]$ as
\begin{equation} 
                    W_{b_2}[h](x,y,t) := \mathcal{K}(t)\Psi_h(x,y) = \int_{-\infty}^{\infty} \int_{-\infty}^{\infty} \Psi_h(\rho, \tau) \cdot K_t (x,y,\rho,\tau) \, d\rho \, d\tau\, ,\label{3.14}
\end{equation}
where $\mathcal{K}(t)$ is an operator acting on each reasonable choice of $\Psi_h \in L^2(\R \times \R^+)$ with an integral kernel $K_t$.

The next task is to prove that
\begin{equation} 
                    \int_0^T \left <\mathcal{K}(t)\psi, \phi(t)\right >_{L^2_{xy} (\R \times \R^+)} \, dt \lesssim \|\psi\|_{L^2(\R \times \R^+)} \cdot \|\phi\|_{L^{q'}_t \left([0,T]; \, L^{r'}_{xy}(\R \times \R^+)\right)}\label{3.15}
\end{equation}
for $\phi \in L_t^{q'}\left([0,T]; \, L_{xy}^{r'}(\R \times \R^+)\right)$ and $\psi \in L^2_{xy}$, where $q'$ and $r'$ are conjugate indices of $q$ and $r$, respectively.

The reason of making efforts to (\ref{3.15}) is that its left hand side $\displaystyle \int_0^T \left<\mathcal{K}(t)\psi, \phi(t)\right>_{L^2_{xy} (\R \times \R^+)} \, dt$, which is given by the inner product of $\mathcal{K} (\cdot)\psi$ and $\phi$ over $L^2_{xyt} (\R \times \R^+ \times [0,T])$, corresponds to a linear functional on $L_t^{q'}\left([0,T]; \, L_{xy}^{r'}(\R \times \R^+)\right)$. Since $\|\mathcal{K} (\cdot)\psi\|_{L_t^{q'}\left([0,T]; \, L_{xy}^{r'}(\R \times \R^+)\right)}$ is the norm of this functional, if (\ref{3.15}) can be proven, then (\ref{3.12}) is proved according to the definition of functional norm.
Thus, it is necessary to define another operator $\mathcal{K}_T$ that maps each $\phi \in L_t^{q'}\left([0,T]; \, L_{xy}^{r'}(\R \times \R^+)\right)$ to a function without $t$, that is,
\begin{equation} 
                    \mathcal{K}_T\phi (\rho,\tau) := \int_0^T \int_{-\infty}^{\infty} \int_0^{\infty} K_t (x,y,\rho,\tau) \overline{\phi(x,y,t)} \, dy \, dx \, dt\, .\label{3.16}
\end{equation}
Moreover, we have
\begin{align*}
                    \int_0^T & \left < \mathcal{K}(t)\psi,  \phi(\cdot, \cdot, t) \right >_{L^2_{xy} (\R \times \R^+)} \, dt
                    = \int_0^T \int_{-\infty}^{\infty} \int_0^{\infty} \mathcal{K}(t)\psi \, (x,y) \cdot \overline{\phi(x,y,t)} \, dx \, dy \, dt \\
                    &= \int_0^T \int_{-\infty}^{\infty} \int_0^{\infty} \int_{-\infty}^{\infty} \int_{-\infty}^{\infty} \psi(\rho, \tau) \cdot K_t (x,y,\rho,\tau) \, d\rho \, d\tau \cdot \overline{\phi(x,y,t)} \, dx \, dy \, dt \\
                    &= \int_{-\infty}^{\infty} \int_{-\infty}^{\infty} \psi(\rho, \tau) \, \underbrace{\int_0^T \int_{-\infty}^{\infty} \int_0^{\infty} K_t (x,y,\rho,\tau) \overline{\phi(x,y,t)} \, dy \, dx \, dt}_{\mathcal{K}_T\phi (\rho,\tau)} \, d\rho \, d\tau\\
                    &\le C \|\psi\|_{L^2_{\rho,\tau}(\R \times \R^+)} \cdot \|\mathcal{K}_T\phi\|_{L^2_{\rho\tau}(\R \times \R^+)}\, ,
\end{align*}
which yields
\begin{equation} 
                    \int_0^T \left<\mathcal{K}(t)\psi, \phi(t)\right>_{L^2_{xy} (\R \times \R^+)} \, dt \lesssim \|\psi\|_{L^2_{\rho\tau}(\R \times \R^+)} \cdot \|\mathcal{K}_T\phi\|_{L^2_{\rho\tau}(\R \times \R^+)}\, .\label{3.17}
\end{equation}

Now we need one more step from (\ref{3.17}) to (\ref{3.15}) and then can achieve the expected result. The following arguments mainly focus on the connection between $\|\phi\|_{L^{q'}_t \left([0,T]; \, L^{r'}_{xy}(\R \times \R^+)\right)}$ and $\|\mathcal{K}_T\phi\|_{L^2_{\rho\tau}(\R \times \R^+)}$.
Recall $K_t$ in (\ref{3.13}) and let
                \begin{equation} 
                        K_{s,\sigma} (x,y,z,w) := \int_{-\infty}^{\infty} \int_{-\infty}^{\infty} K_s (x,y,\rho,\tau) \overline{K_{\sigma} (z,w,\rho,\tau)} \, d\rho \, d\tau\, .\label{3.18}
                \end{equation}
Then, it is obtained that
\begin{align*}
\|\mathcal{K}_T\phi\|^2_{L^2}
=& \int_{-\infty}^{\infty} \int_{-\infty}^{\infty} \left(\int_0^T \int_{-\infty}^{\infty} \int_0^{\infty} K_s (x,y,\rho,\tau) \overline{\phi(x,y,s)} \, dy \, dx \, ds \right)  \\
& \qquad \times \left(\int_0^T \int_{-\infty}^{\infty} \int_0^{\infty} \overline{K_{\sigma} (z,w,\rho,\tau)} \phi(z,w,\sigma) \, dw \, dz \, d\sigma \right) \, d\rho \, d\tau \\
=& \int_0^T \int_0^T \bigg [ \int_{-\infty}^{\infty} \int_{-\infty}^{\infty} \int_0^{\infty} \int_{-\infty}^{\infty} \int_0^{\infty} \int_{-\infty}^{\infty} \overline{\phi(x,y,s)} \phi(z,w,\sigma) \\
& \qquad \times K_s (x,y,\rho,\tau) \overline{K_{\sigma} (z,w,\rho,\tau)} dx \, dy \, dz \, dw \, d\rho \, d\tau \bigg ] \, ds \, d\sigma \\
=& \int_0^T \int_0^T \bigg [\int_{-\infty}^{\infty} \int_{-\infty}^{\infty} \int_0^{\infty} \int_0^{\infty} \overline{\phi(x,y,s)} \phi(z,w,\sigma)  \\
& \qquad \times \left(\int_{-\infty}^{\infty} \int_{-\infty}^{\infty} K_s (x,y,\rho,\tau) \overline{K_{\sigma} (z,w,\rho,\tau)} \, d\rho \, d\tau \right) dw \, dy \, dz \, dx \bigg ] \, ds \, d\sigma \\
=& \int_0^T \int_0^T \left(\int_{-\infty}^{\infty} \int_{-\infty}^{\infty} \int_0^{\infty} \int_0^{\infty} \overline{\phi(x,y,s)} \phi(z,w,\sigma) K_{s,\sigma}(x,y,z,w) dw \, dy \, dz \, dx \right) \, ds \, d\sigma\, .
\end{align*}
For $y$, $w\ge0$,
\begin{align*}
                    K_{s,\sigma} &= \int_{-\infty}^{\infty} \int_{-\infty}^{\infty} K_s (x,y,\rho,\tau) \overline{K_{\sigma} (z,w,\rho,\tau)} \, d\rho \, d\tau \\
                    &= \int_{-\infty}^{\infty} \int_{-\infty}^{\infty} \left(\int_{-\infty}^{\infty} \int_{-\infty}^{\infty} e^{-i(\xi^2-\eta^2)s+ix\xi-y|\eta|-i(\xi\rho+\eta\tau)} \, d\eta \, d\xi \right) \\
                    & \qquad \times \left(\int_{-\infty}^{\infty} \int_{-\infty}^{\infty} e^{i(\tilde{\xi}^2-\tilde{\eta}^2)\sigma-iz\tilde{\xi}-w|\tilde{\eta}|+i(\tilde{\xi}\rho+\tilde{\eta}\tau)} \, d\tilde{\eta} \, d\tilde{\xi} \right) \, d\rho \, d\tau \\
                    &= (2\pi)^2 \int_{-\infty}^{\infty} \int_{-\infty}^{\infty} e^{-i(\xi^2-\eta^2)s+ix\xi-y|\eta|} \bigg [ \int_{-\infty}^{\infty} \int_{-\infty}^{\infty} e^{-i(\xi\rho+\eta\tau)}  \\
                    & \qquad \times \left(\dfrac{1}{(2\pi)^2} \int_{-\infty}^{\infty} \int_{-\infty}^{\infty} e^{i(\tilde{\xi}^2-\tilde{\eta}^2)\sigma-iz\tilde{\xi}-w|\tilde{\eta}|} e^{i(\tilde{\xi}\rho+\tilde{\eta}\tau)} \, d\tilde{\eta} \, d\tilde{\xi} \right) \, d\rho \, d\tau \bigg ] \, d\eta \, d\xi \\
                    &= (2\pi)^2 \int_{-\infty}^{\infty} \int_{-\infty}^{\infty} e^{i(-\xi^2-\eta^2)s+ix\xi-y|\eta|} \mathcal{F}_{\rho, \tau} \circ \mathcal{F}^{-1}_{\tilde{\xi}, \tilde{\eta}} \left[ e^{i(\tilde{\xi}^2-\tilde{\eta}^2)\sigma-iz\tilde{\xi}-w\tilde{\eta}} \right] \, d\eta \, d\xi \\
                    &= (2\pi)^2 \int_{-\infty}^{\infty} \int_{-\infty}^{\infty} e^{-i(\xi^2-\eta^2)(s-\sigma)+i\xi(x-z)-|\eta|(y+w)} \, d\eta \, d\xi \\
                    &= C \left(\int_{-\infty}^{\infty} e^{-i\xi^2(s-\sigma)+i\xi(x-z)} \, d\xi \right) \cdot \left(\int_{-\infty}^{\infty} e^{i\eta^2(s-\sigma)-|\eta|(y+w)} \, d\eta \right)\, .
\end{align*}
If we define
                \begin{equation}
                    \widetilde{K}_t (x,y,z,w) = \left(\int_{-\infty}^{\infty} e^{-i\xi^2t+i\xi(x-z)} \, d\xi \right) \cdot \left(\int_{-\infty}^{\infty} e^{i\eta^2t-|\eta|(y+w)} \, d\eta \right) \qquad \text{\ for $y>0$ \ and \ $w>0$}\, ,\label{3.19}
                \end{equation}
 then $K_{s,\sigma} (x,y,z,w) = \widetilde{K}_{s-\sigma} (x,y,z,w)$.
Now, we use $\widetilde{K}_{s-\sigma}$ to study the operator
\begin{equation}
                    \widetilde{\mathcal{K}}(t)\varphi (x,y) = \int_{-\infty}^{\infty} \int_0^{\infty} \varphi(z,w) \widetilde{K}_t (x,y,z,w) \, dz \, dw
                    \label{3.20}
\end{equation}
for  $\varphi \in C_0^{\infty} (\R \times \R^+)$.
To estimate the kernel $\widetilde{K}_t$, for $(x,y,z,w) \in \R \times \R^+ \times \R \times \R^+$, it is derived that
\begin{eqnarray*}
                    \left|\int_{-\infty}^{\infty} e^{-i\xi^2t+i(x-z)\xi} \, d\xi \right| &\overset{\xi\sqrt{t} \rightarrow \xi_0}{=}& \dfrac{1}{\sqrt{t}} \left| \int_{-\infty}^{\infty} e^{-i\xi_0^2+i\frac{(x-z)}{\sqrt{t}}\xi_0} \, d\xi_0 \right| \\
                    &\overset{\left(\xi_0-\frac{x-z}{2\sqrt{t}} \right) \rightarrow \xi_1}{=}& \dfrac{1}{\sqrt{t}} \left| \int_{-\infty}^{\infty} e^{-i\xi_1^2} \, d\xi \right| = \dfrac{C}{\sqrt{t}}\, .
\end{eqnarray*}
An estimate from \cite{Sun_NSE_1d} yields
                \begin{equation*}
                    \left|\int_0^{\infty} e^{i\eta^2t-a\eta-i\eta b} \, d\eta \right| \le \dfrac{C}{\sqrt{t}} \quad \text{for any } a \in \R^+ \, ,  b \in \R\, .
                \end{equation*}
By substituting $y+w$ for $a$ and $0$ for $b$, we obtain
                \begin{equation*}
                    \left|\int_{-\infty}^{\infty} e^{i\eta^2t-|\eta|(y+w)} \, d\eta \right| \le \dfrac{C}{\sqrt{t}}\, .
                \end{equation*}
Hence
\begin{eqnarray}
                    \left|\widetilde{K}_t (x,y,z,w) \right| = \left|\int_{-\infty}^{\infty} e^{-i\xi^2t+ix\xi-i\xi z} \, d\xi \right| \cdot \left|\int_{-\infty}^{\infty} e^{i\eta^2t-|\eta|(y+w)} \, d\eta \right|  \le \dfrac{C}{\sqrt{t}} \cdot \dfrac{C}{\sqrt{t}}
                    \le \dfrac{C}{t} \, .\label{3.21}
\end{eqnarray}
For  some fixed $t> 0 $,   (\ref{3.20}) and (\ref{3.21}) imply
                \begin{eqnarray}
                    \left\|\widetilde{\mathcal{K}}(t)\varphi \right\|_{L^{\infty}_{xy}} &\le& \int_{-\infty}^{\infty} \int_0^{\infty} \left|\varphi(z,w) \widetilde{K}_t (x,y,z,w)\right| \, dw \, dz \nonumber \\
                    &\le& \dfrac{C}{|t|} \int_{-\infty}^{\infty} \int_0^{\infty} |\varphi(z,w)| \, dw \, dz
                    = C \cdot |t|^{-1} \|\varphi\|_{L^1_{xy}} \label{3.22}
                \end{eqnarray}
for each fixed $t>0$.
On the other hand, for $t > 0 $, we can also prove that $\|\widetilde{\mathcal{K}}(t)\varphi\|_{L^2 (\R \times \R^+)} \le \|\varphi\|_{L^2 (\R \times \R^+)}$ by an estimate $ \left\|\int_0^{\infty} e^{-y\eta} f(\eta) \, d\eta \, \right\|_{L^2 (\R^+)} \le \|f\|_{L^2 (\R^+)}$. From
\begin{eqnarray*}
                    \widetilde{\mathcal{K}}(t) \varphi (x,y) &=& \int_{-\infty}^{\infty} \int_0^{\infty} \varphi(z,w) \widetilde{K}_t (x,y,z,w) \, dw \, dz  \\
                    &=& \int_{-\infty}^{\infty} \int_0^{\infty} \varphi(z,w) \left[ \int_{-\infty}^{\infty} \int_{-\infty}^{\infty} e^{-i(\xi^2-\eta^2)t+ix\xi-y|\eta|-i\xi z-|\eta| w} \,d\eta \, d\xi \right] \, dw \, dz \\
                    &=& 4 \int_0^{\infty} e^{-y\eta} \, \int_{-\infty}^{\infty} e^{ix\xi} \, \left[\int_{-\infty}^{\infty} \int_0^{\infty} e^{-i\xi z-\eta w} e^{-i(\xi^2-\eta^2)t} \varphi(z,w) \, dw \, dz \right] \, d\xi \, d\eta \\
                    &=& C \int_0^{\infty} e^{-y\eta} \, \F_{\xi}^{-1} \left[\int_{-\infty}^{\infty} \int_0^{\infty} e^{-i\xi z-\eta w} e^{-i(\xi^2-\eta^2)t} \varphi(z,w) \, dw \, dz \right] (x) \, d\eta \\
                    &=& C \int_0^{\infty} e^{-y\eta} \, \F_{\xi}^{-1} \left[\F_z \left[\int_0^{\infty} e^{-w\eta} e^{-i(\xi^2-\eta^2)t} \varphi(z,w) \, dw \right] (\xi) \right] (x) \, d\eta\, , \\
\end{eqnarray*}
we have
\begin{align}
\left\|\widetilde{\mathcal{K}}(t)\varphi \right\|& _{L^2_{xy}(\R \times \R^+)}^2  = \int_{-\infty}^{\infty} \left\|\widetilde{\mathcal{K}}(t)\varphi \right\|_{L^2_y (\R^+)}^2 \, dx \nonumber \\
                    &= \int_{-\infty}^{\infty} \left\|\int_0^{\infty} e^{-y\eta} \, \F_{\xi}^{-1} \left[ e^{-i(\xi^2-\eta^2)t} \int_0^{\infty} e^{-w\eta} \F_z \left[\varphi(z,w) \right] (\xi) \, dw \right] (x) \, d\eta \, \right\|^2_{L^2_y (\R^+)} \, dx \nonumber \\
                    &\le \int_{-\infty}^{\infty} \left\|\F_{\xi}^{-1} \left[ e^{-i(\xi^2-\eta^2)t} \int_0^{\infty} e^{-w\eta} \F_z \left[\varphi(z,w) \right] (\xi) \, dw \right] (x) \, \right\|^2_{L^2_{\eta} (\R^+)} \, dx  \nonumber \\
                    &= \int_0^{\infty} \left\|\F_{\xi}^{-1} \left[ e^{-i(\xi^2-\eta^2)t} \int_0^{\infty} e^{-w\eta} \F_z \left[\varphi(z,w) \right] (\xi) \, dw \right] (\cdot) \, \right\|^2_{L^2_x (\R)} \, d\eta \nonumber \\
                    &= \int_0^{\infty} \left\|\int_0^{\infty} e^{-w\eta} \F_z \left[\varphi(z,w) \right](\cdot) \, dw \, \right\|^2_{L^2_{\xi} (\R)} \, d\eta \nonumber \\
                    &= \int_{-\infty}^{\infty} \left\|\int_0^{\infty} e^{-w\eta} \F_z \left[\varphi(z,w) \right](\xi) \, dw \, \right\|^2_{L^2_{\eta} (\R^+)} \, d\xi \nonumber \\
                    &\le \left\|\F_z \left[\varphi(z,w) \right] \right\|^2_{L^2_{\xi w} (\R \times \R^+)} =\|\varphi\|^2_{L^2_{z,w} (\R \times \R^+)}\, .\label{3.23}
\end{align}
By (\ref{3.22}), (\ref{3.23}) and interpolation, we obtain
                \begin{equation}
                    \left\|\widetilde{\mathcal{K}}(t)\varphi \right\|_{L^r_{xy}(\R \times \R^+)} \lesssim t^{-2\left(\frac{1}{2}-\frac{1}{r}\right)} \|\varphi\|_{L^{r'}(\R \times \R^+)}\, . \label{3.24}
                \end{equation}
Then, for $\phi \in C_c \left([0,T]; \, L_{xy}^{r'}(\R \times \R^+)\right)$, a classical density argument leads to
\begin{equation}
\left\|\widetilde{\mathcal{K}}(t)\phi \right\|_{L^r_{xy}(\R \times \R^+)} \lesssim t^{-2\left(\frac{1}{2}-\frac{1}{r}\right)} \|\phi(t)\|_{L^{r'}(\R \times \R^+)}\label{3.25}
\end{equation}
if $t> 0 $ is fixed. Thus,
                \begin{align*}
                    \left\|\int_0^t \widetilde{\mathcal{K}}(t-s)\phi(s) \, ds \, \right\|_{L^r_{xy}} &\le C \int_0^T |t-s|^{-2 \left(\frac{1}{2}-\frac{1}{r}\right)} \|\phi(s)\|_{L^{r'}_{xy}} \, ds
                    = C \int_0^T |t-s|^{\frac{-2}{q}} \|\phi(s)\|_{L^{r'}_{xy}} \, ds\, .
                \end{align*}
By Riesz potential inequality,
                \begin{eqnarray*}
                    \left\|\int_0^{(\cdot)} \widetilde{\mathcal{K}}(\cdot-s)\phi(s) \, ds \, \right\|_{L^q_t \left([0,T]; \, L^r_{xy}\right)} &\le& \left\|\int_0^T \widetilde{\mathcal{K}}(\cdot-s)\phi(s) \, d s \, \right\|_{L^q_t \left([0,T]; \, L^r_{xy} (\R \times \R^+)\right)} \nonumber \\
                    &\lesssim& \|\phi\|_{L^{q'}_t \left([0,T]; \, L^{r'}_{xy} (\R \times \R^+)\right)}
                \end{eqnarray*}
or
                \begin{equation} 
                    \left\|\int_0^T \int_{-\infty}^{\infty} \int_0^{\infty} \phi(z,w,s) \widetilde{K}_{t-s} (x,y,z,w) \, dz \, dw \, ds \, \right\|_{L^q_t \left([0,T]; \, L^r_{xy}\right)} \lesssim \|\phi\|_{L^{q'}_t \left([0,T]; \, L^{r'}_{xy}\right)}\, .\label{3.26}
                \end{equation}
To find the estimate of $\|\mathcal{K}_T\phi\|^2_{L^2}$, by (\ref{3.26}) we have
\begin{align}
                   \|\mathcal{K}_T& \phi\|_{L^2(\R \times \R^+)}^2 = \int_0^T \int_0^T \left(\int_{-\infty}^{\infty} \int_{-\infty}^{\infty} \int_0^{\infty} \int_0^{\infty} \overline{\phi(x,y,s)} \phi(z,w,\sigma) \widetilde{K}_{s-\sigma} (x,y,z,w) dw \, dy \, dz \, dx \right) \, ds \, d\sigma \nonumber \\
                    &= \int_0^T  \int_{-\infty}^{\infty} \int_0^{\infty} \overline{\phi(x,y,s)} \left[\int_0^T \left(\int_{-\infty}^{\infty} \int_0^{\infty} \phi(z,w,\sigma) \widetilde{K}_{s-\sigma} (x,y,z,w) \, dw \, dz \right) \, d\sigma \right] \, dy \, dx \, ds \nonumber\\
                    &\le C \|\phi\|_{L^{q'}([0,T]; L^{r'}(\R \times \R^+))} \cdot \left\|\int_0^T \int_{-\infty}^{\infty} \int_0^{\infty} \phi(z,w,\sigma) \widetilde{K}_{s-\sigma} (x,y,z,w) \, dz \, dw \, d\sigma \, \right\|_{L^q([0,T]; L^r(\R \times \R^+))} \nonumber\\
                    &\le C \|\phi\|_{L^{q'}([0,T]; L^{r'}(\R \times \R^+))} \cdot \|\phi\|_{L^{q'}([0,T]; L^{r'}(\R \times \R^+))} = C \|\phi\|_{L^{q'}([0,T]; L^{r'}(\R \times \R^+))}^2\, .\label{3.27}
                \end{align}
Finally, from (\ref{3.17}) and (\ref{3.15}), it is deduced that
                \begin{equation*}
                    \int_0^T \left<\mathcal{K}(t)\psi, \phi(t)\right>_{L^2_{xy} (\R \times \R^+)} \, dt \lesssim \|\psi\|_{L^2(\R \times \R^+)} \cdot \|\phi\|_{L^{q'}_t \left([0,T]; \, L^{r'}_{xy}(\R \times \R^+)\right)}
                \end{equation*}
or
                \begin{equation*}
                    \|\mathcal{K}(t)\psi\|_{L^q([0,T]; L^r(\R \times \R^+))} \le C \|\psi\|_{L^2(\R \times \R^+)}\, ,
                \end{equation*}
which yields
                \begin{equation*}
                    \|W_{b_2}[h]\|_{L^q_t\left(\R^+; \, L^r_{xy}(\R \times \R^+)\right)} = \|\mathcal{K}(t)\Psi_h\|_{L^q_t\left(\R^+; \, L^r_{xy}(\R \times \R^+)\right)} \le C \cdot \|\Psi_h\|_{L^2_{(\xi, \eta)}}\, .
                \end{equation*}
The proof of (\ref{3.12})  is then completed. Similar to (\ref{3.11.5}), we have
\begin{align*}
\|\Psi_h\|^2_{L^2} &= \|\widehat{\Psi_h}(\xi,\eta)\|^2_{L^2} \eqsim \int_{-\infty}^{\infty} \int_0^{\infty} \eta^2 \left|\widetilde{h}\big (\xi,-i(\xi^2-\eta^2)\big ) \right|^2 \, d\eta \, d\xi \\
&\eqsim \int_{-\infty}^{\infty} \int_0^{\infty} \eta^2 \left|\int_0^{\infty} e^{i(\xi^2-\eta^2)t} \int_{-\infty}^{\infty} e^{-i\xi x} h(x,t) \, dx \, dt\right|^2 d\eta \, d\xi \\
&= C \int_{-\infty}^{\infty} \int_0^{\infty} \sqrt{\beta} \left|\int_0^{\infty} \int_{-\infty}^{\infty} e^{-i\beta t} e^{i\xi^2t} e^{-i\xi x} h(x,t) \, dx \, dt \right|^2 d\beta \,d\xi\, ,
                \end{align*}
which implies
                \begin{equation}
                    \|W_{b_2}[h]\|_{L^q_t \left([0,T]; \, L^r_{xy}(\R \times \R^+)\right)} \lesssim \left\{\int_{-\infty}^{\infty} \int_0^{\infty} \sqrt{\beta} \left|\int_0^{\infty} \int_{-\infty}^{\infty} e^{-i\beta t} e^{i\xi^2t} e^{-i\xi x} h(x,t) \, dx \, dt \right|^2 d\beta \,d\xi \right\}^{\frac{1}{2}}\, .\label{3.28}
                \end{equation}

Next, we turn to study the case with $ s > 0$, i.e.,
                \begin{align}
                     \|W_{b_2}& [h]\|_{L^q_t \left([0,T]; \, W^{s,r}_{xy}(\R \times \R^+)\right)} \nonumber \\
                    &\lesssim \left\{\int_{-\infty}^{\infty} \int_{-\infty}^0 (1+|\beta|+\xi^2)^s \sqrt{|\beta|} \left|\int_0^{\infty} \int_{-\infty}^{\infty} e^{i\left(\beta+\xi^2\right)t} e^{-i\xi x} h(x,t) \, dx \, dt \right|^2 \, d\beta \, d\xi \right\}^{\frac{1}{2}}\, .\label{3.29}
                \end{align}
Write the multi-index $\alpha = (\alpha_1, \, \alpha_2)^T \in \Z^2$, i.e., $(\xi\eta)^{\alpha} = \xi^{\alpha_1} \eta^{\alpha_2}$, and let $\alpha_1$, $\alpha_2\ge 0$ with $|\alpha| = \alpha_1 + \alpha_2 = m$. Note that
\begin{eqnarray*}
                   D^{\alpha}_{xy} W_{b_2}[h] &\eqsim& \int_{-\infty}^{\infty} \int_0^{\infty} e^{-i(\xi^2-\eta^2)t+ix\xi-y\eta} \cdot [(i\xi)(-\eta)]^{\alpha} \cdot \eta \widetilde{h}\big (\xi, -i(\xi^2-\eta^2)\big ) \, d\eta \, d\xi \\
                   &=& \int_{-\infty}^{\infty} \int_0^{\infty} e^{-i(\xi^2-\eta^2)t+ix\xi-y\eta} \Psi_{h, \alpha} (\xi, \eta) \, d\eta \, d\xi\, ,
\end{eqnarray*}
where $\Psi_{h, \alpha} (\xi, \eta) = [(i\xi)(-\eta)]^{\alpha} \cdot \eta \widetilde{h}\left (\xi, -i(\xi^2-\eta^2)\right )$ for $\eta \ge 0$ and otherwise $\Psi_{h, \alpha}=0$.
Thus, $\left|(\xi\eta)^{\alpha}\right| \le C(m) \left ( 1 + |\xi|^2 + |\eta|^2 \right ) ^{m/2} $ and the inequality with $s=0$ imply that
\begin{align*}
                    \sum_{|\alpha|=m} & \left\|D^{\alpha}_{xy} W_{b_2}[h]\right\|^2_{L^q_t\left ([0,T]; \, L^r_{xy}(\R \times \R^+)\right )} \lesssim \sum_{|\alpha|=m} \|\Psi_{h, \alpha}\|^2_{L^2} \\
                   &\le C(m) \int_{-\infty}^{\infty} \int_0^{\infty} \left (1+ \beta+\xi^2\right )^m \sqrt{\beta} \left|\int_0^{\infty} e^{-i(\beta-\xi^2)t} \int_{-\infty}^{\infty} e^{-i\xi x} h(x,t) \, dx \, dt \right|^2 \,d\beta \, d\xi \\
                   &= C(m) \int_{-\infty}^{\infty} \int_{-\infty}^0 \left (1+ |\beta|+\xi^2\right )^m \sqrt{|\beta|} \left|\int_0^{\infty} e^{i(\beta+\xi^2)t} \int_{-\infty}^{\infty} e^{-i\xi x} h(x,t) \, dx \, dt \right|^2 \,d\beta \, d\xi\, .
\end{align*}
A simple interpolation argument leads to (\ref{3.29}).

To prove (\ref{3.9}),  let $\lambda = -\left(\beta+\xi^2\right)$, which gives $\beta = -\left(\lambda+\xi^2\right)$, then
\begin{align}
 \|W_b [h]& \|^2_{L^q_t\left ([0,T]; \, W^{s,r}_{xy}(\R \times \R^+)\right )} \nonumber \\
                &\le C \int_{-\infty}^{\infty} \int_{-\infty}^{\infty} \left (1+|\beta|+\xi^2\right )^s \sqrt{|\beta|} \left|\int_0^{\infty} e^{i(\beta+\xi^2)t} \int_{-\infty}^{\infty} e^{-i\xi x} h(x,t) \, dx \, dt \right|^2 \,d\beta \,d\xi \nonumber\\
                &= C \int_{-\infty}^{\infty} \int_{-\infty}^{\infty} \left(1+|\lambda+\xi^2|+\xi^2\right)^s \sqrt{|\lambda+\xi^2|} \left|\int_0^{\infty} e^{-i\lambda t} \int_{-\infty}^{\infty} e^{-i\xi x} h(x,t) \, dx \, dt\right|^2 \,d\lambda \,d\xi \nonumber\\
                &\leq C \int_{-\infty}^{\infty} \int_{-\infty}^{\infty} \left(1+|\lambda|+\xi^2\right)^s \sqrt{|\lambda+\xi^2|} \left|\int_0^{\infty} e^{-i\lambda t} \int_{-\infty}^{\infty} e^{-i\xi x} h(x,t) \, dx \, dt\right|^2 \,d\lambda \,d\xi \nonumber \\
                &\leq C \| h\|^2_{{\cal H}_s} \nonumber
            \end{align}
where $h$ can be first chosen as a smooth function of compactly supported with respect to $t$ in $[0,T]$ and then a density argument can be applied. Thus, we finish the proof of  (\ref{3.9}). This completes the proof of the proposition.
\end{proof}


%

Now, we derive the estimates for the solution $v(x,y,t) = W_{\R^2}(t)\phi (x,y)$ in (\ref{2.2}), where
        \begin{equation}
            W_{\R^2}(t) \phi(x,y) = \int_{-\infty}^{\infty} \int_{-\infty}^{\infty} e^{-i(\xi^2+\eta^2)t+i(\xi x + \eta y)} \widehat{\phi}(\xi,\eta) \, d\xi \, d\eta\, , \label{3.32}
        \end{equation}
        (see \emph{Section 2.2} of \cite{cazenave}).
The following estimates are needed later.
\begin{prop}\label{prop3.4} 
        Let $\frac{1}{q}+\frac{1}{r} = \frac{1}{2}$ where $r\in [2, \infty)$.
\begin{align}
& \left\|W_{\R^2}\phi\right\|_{L^q_t \left([0,T]; \, W^{s,r}_{xy}(\R^2)\right)} \lesssim \left\|\phi\right\|_{H^s_{xy}(\R^2)}\, ,\label{3.33}\\
& \left\|W_b \left[(W_{\R^2}\phi)\big |_{y=0}\right] \, \right\|_{L^q_t \left([0,T]; \, W^{s,r}_{xy}(\R^2)\right)} \lesssim \|\phi\|_{H^s(\R^2)}\, .\label{3.34}
\end{align}

\end{prop}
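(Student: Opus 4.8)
The plan is to handle the two estimates separately: \eqref{3.33} is nothing but the classical Strichartz estimate for the free Schr\"odinger group, while \eqref{3.34} follows by composing the trace estimate of Lemma \ref{lemm2.2} with the boundary-operator estimate of Proposition \ref{prop3.2}.

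For \eqref{3.33} I would simply invoke the homogeneous Strichartz estimate for $W_{\R^2}(t)=e^{it\Delta}$ on $L^2(\R^2)$, already used in the proof of Proposition \ref{prop3.2} and contained in \emph{Remark 2.3.8} of \cite{cazenave}. To pass from $s=0$ to a general $s\ge 0$, one uses that the Fourier multiplier with symbol $(1+|\boldsymbol{\xi}|^2)^{s/2}$ in the variables $(x,y)$ commutes with $W_{\R^2}(t)$; applying the $s=0$ estimate to $(1+|\boldsymbol{\xi}|^2)^{s/2}\widehat{\phi}$ then gives $\|W_{\R^2}\phi\|_{L^q_t([0,T];W^{s,r}_{xy}(\R^2))}\lesssim \|\phi\|_{H^s(\R^2)}$, where the estimate over $\R$ in time has been restricted to $[0,T]$. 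For integer $s$ one may equivalently commute the derivatives $D^\alpha_{xy}$ with $|\alpha|\le s$ through the group, which is precisely the device already used for $W_{b_2}$ in Proposition \ref{prop3.2}.

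For \eqref{3.34}, set $v_b:=\left.\big(W_{\R^2}(t)\phi\big)\right|_{y=0}$. By Lemma \ref{lemm2.2}, $\phi\in H^s(\R^2)$ forces $v_b\in{\cal H}^s(\R^2)$ with $\|v_b\|_{{\cal H}^s(\R^2)}\le C\|\phi\|_{H^s(\R^2)}$; this is exactly the reason for working with ${\cal H}^s$ rather than the smaller space ${\cal W}^s$. Then I would apply Proposition \ref{prop3.2} with boundary datum $h=v_b$, which yields $\|W_b[v_b]\|_{L^q_t([0,T];W^{s,r}_{xy}(\R\times\R^+))}\lesssim \|v_b\|_{{\cal H}^s(\R^2)}$ for every admissible pair $(q,r)$ with $r\in[2,\infty)$, and chain the two bounds to obtain \eqref{3.34}. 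The only bookkeeping point is that $W_b$ is built from a Laplace transform in $t$, so strictly one first proves the bound with $v_b$ replaced by a function compactly supported in $t\in(0,T)$ (or $v_b$ restricted to $[0,T]$) and then passes to the limit by density, exactly as in the closing line of the proof of Proposition \ref{prop3.2}; note also that the $W_{b_1}$-component of $W_b[v_b]$ is itself of the form $W_{\R^2}(t)\Phi_{v_b}$, so its contribution on the whole plane is already covered by \eqref{3.33}.

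Since all the hard analytic work is encapsulated in Lemma \ref{lemm2.2} and Proposition \ref{prop3.2}, I do not expect a genuine obstacle in Proposition \ref{prop3.4}; the main thing to be careful about is the matching of ambient function spaces — that the trace of the whole-plane evolution lands \emph{exactly} in the space ${\cal H}^s$ on which $W_b$ has been shown bounded — together with the minor density argument needed to legitimize the Laplace-transform representation of $W_b[v_b]$.
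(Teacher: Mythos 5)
Your proposal is correct and follows essentially the same route as the paper: \eqref{3.33} is exactly the classical Strichartz estimate from \cite{cazenave} (with the standard commutation of $(1+|\boldsymbol{\xi}|^2)^{s/2}$ through the group, which is routine), and \eqref{3.34} is obtained, as in the paper, by chaining the trace bound of Lemma \ref{lemm2.2} with the boundary-operator estimate of Proposition \ref{prop3.2}. Your extra remarks on the density argument and on the $W_{b_1}$-component are consistent with the details already supplied in the proof of Proposition \ref{prop3.2}.
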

\begin{proof}
         (\ref{3.33})  is proved in \emph{Section 2.3} of \cite{cazenave} as the classic Strichartz estimate and
(\ref{3.34}) follows from Lemma \ref{lemm2.2} and Proposition \ref{prop3.2}.


\end{proof}

Finally, we derive the estimates for the nonhomogeneous terms in (\ref{2.3}). Let $f: \, \R^2 \times [0,T] \to \C$ be an extension of $g$, i.e. $f(x,y,t) = g(x,y,t)$ for $y\ge0$. We will show that the solution $ u = \Phi_f $ of  (\ref{2.3})
satisfies some estimates needed later, where
it is well-known that
        \begin{equation}
            \Phi_f(x,y,t) = i \left(\int_0^t W_{\R^2}(t-\tau)f(\tau) \, d\tau \right) (x,y) = i \int_0^t \int_{-\infty}^{\infty} \int_{-\infty}^{\infty} e^{-i(\xi^2+\eta^2)(t-\tau)+i(\xi x + \eta y)} \widehat{f^{xy}}(\xi,\eta,\tau) \, d\xi \, d\eta \, d\tau\, .\label{3.37}
        \end{equation}
\begin{prop}\label{prop3.5} 
Let $\frac{1}{q}+\frac{1}{r} = \frac{1}{2}$ and  $\frac{1}{\gamma}+\frac{1}{\rho} = \frac{1}{2}$ where $r$, $\rho\in [2, \infty)$. Then,
\begin{align}
& \left\|\Phi_f \, \right\|_{L^q_t \left([0,T]; \, W^{s,r}_{xy}(\R^2)\right)} \lesssim \left\|f \, \right\|_{L^{\gamma'}_t \left([0,T]; \, W^{s,\rho'}_{xy}(\R^2)\right)} \, ,\label{3.38}\\
& \left\|W_b \left[\Phi_f|_{y=0}\right] \, \right\|_{L^q_t \left([0,T]; \, W^{s,r}_{xy}(\R^2)\right)} \lesssim \|f\|_{L^1_t \left([0,T]; \, H^s_{xy}(\R^2)\right)}\, .\label{3.39}
\end{align}
\end{prop}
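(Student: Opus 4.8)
## Proof proposal for Proposition \ref{prop3.5}

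The plan is to establish the two estimates separately, each by reducing to facts already proved in the excerpt. For \eqref{3.38}, I would observe that $\Phi_f = i\int_0^t W_{\R^2}(t-\tau)f(\tau)\,d\tau$ is exactly the Duhamel term for the free Schr\"odinger propagator on $\R^2$, and the inhomogeneous (retarded) Strichartz estimate for $W_{\R^2}$ is standard. For $s=0$ this is precisely the dual--$TT^*$ consequence of the homogeneous estimate \eqref{3.33}, i.e. the Christ--Kirchheim/Keel--Tao argument, and it is stated in \emph{Section 2.3} of \cite{cazenave}; one gets $\|\Phi_f\|_{L^q_t([0,T];L^r_{xy}(\R^2))}\lesssim \|f\|_{L^{\gamma'}_t([0,T];L^{\rho'}_{xy}(\R^2))}$ for admissible $(q,r)$ and $(\gamma,\rho)$. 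For $s>0$ one commutes the operator $D^\alpha_{xy}$ (or the Fourier multiplier $(1+|\xi|^2+|\eta|^2)^{s/2}$) with $W_{\R^2}(t)$, since the propagator is itself a Fourier multiplier and therefore commutes with all constant-coefficient derivatives, and applies the $s=0$ estimate to $D^\alpha_{xy}f$; summing over $|\alpha|=m$ for integer $s=m$ and interpolating in the standard way (as the excerpt does right after \eqref{3.28}) gives the fractional-$s$ version. This part presents no real obstacle.

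For \eqref{3.39}, the key point is that the trace $q_b := \Phi_f|_{y=0}$ must be shown to lie in ${\cal H}^s(0,T)$ with the right bound, after which Proposition \ref{prop3.2} (the estimate \eqref{3.9} for $W_b$) finishes the job. So the plan is: first prove
\[
\|q_b\|_{{\cal H}^s(\R^2)} \;\lesssim\; \|f\|_{L^1_t([0,T];H^s_{xy}(\R^2))},
\]
then invoke $\|W_b[q_b]\|_{L^q_t([0,T];W^{s,r}_{xy})}\lesssim \|q_b\|_{{\cal H}^s(\R^2)}$ from Proposition \ref{prop3.2}. To get the trace estimate I would write $\Phi_f(x,y,t) = i\int_0^t W_{\R^2}(t-\tau)f(\tau)\,d\tau$ and use Minkowski's integral inequality to pull the $\tau$-integral outside: $\|q_b\|_{{\cal H}^s(\R^2)} \le \int_0^T \|\,[W_{\R^2}(\cdot-\tau)f(\tau)]|_{y=0}\,\|_{{\cal H}^s(\R^2)}\,d\tau$. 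For each fixed $\tau$, $[W_{\R^2}(\cdot-\tau)f(\tau)]|_{y=0}$ is the trace of a free solution with data $W_{\R^2}(-\tau)f(\tau)\in H^s(\R^2)$ (the group is unitary on $H^s$), so Lemma \ref{lemm2.2} applies and bounds this by $C\|W_{\R^2}(-\tau)f(\tau)\|_{H^s(\R^2)} = C\|f(\tau)\|_{H^s(\R^2)}$. Integrating in $\tau$ over $[0,T]$ yields exactly the $L^1_t H^s_{xy}$ norm on the right, completing the argument.

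The one technical subtlety worth care is the time-domain bookkeeping: Lemma \ref{lemm2.2} is stated for the trace over all of $\R$ in $t$, whereas $q_b$ is defined on $[0,T]$; since the ${\cal H}^s(0,T)$ norm is the restriction norm and $W_{\R^2}(\cdot-\tau)f(\tau)$ is a genuine global-in-time free solution for each $\tau$, restricting to $\R\times(0,T)$ only decreases the norm, so the estimate passes through. As in the rest of Section 3, one should first run the argument for $f$ smooth and compactly supported in $t\in(0,T)$ and then extend by density. The mild obstacle, if any, is organizing the commutation of derivatives with the trace map in the $s>0$ case for \eqref{3.39}; this is handled exactly as in the proof of Proposition \ref{prop3.2}, by noting $D^\alpha_{xy}$ commutes with $W_{\R^2}$ and with taking traces in the tangential variable $x$ (the only derivatives that survive on the boundary act on $x$ and $t$, and normal derivatives are converted via the equation), then applying the $s=0$ trace estimate to the differentiated data and interpolating.
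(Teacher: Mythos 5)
Your treatment of \eqref{3.38} is fine and coincides with the paper's (both simply cite the inhomogeneous Strichartz estimate from \cite{cazenave}, with derivatives commuted through the multiplier $W_{\R^2}(t)$). The problem is in your proof of \eqref{3.39}. The trace is $q_b(x,t)=i\int_0^{t}[W_{\R^2}(t-\tau)f(\tau)]|_{y=0}\,d\tau$, a \emph{retarded} integral: the upper limit depends on $t$. Minkowski's inequality therefore does not give the bound you wrote; applied correctly it gives
\begin{equation*}
\|q_b\|_{{\cal H}^s(\R^2)}\;\le\;\int_0^T\big\|\,\zeta_{\{t>\tau\}}(\cdot)\,[W_{\R^2}(\cdot-\tau)f(\tau)]\big|_{y=0}\,\big\|_{{\cal H}^s(\R^2)}\,d\tau\,,
\end{equation*}
with a sharp time cutoff inside the norm, and Lemma \ref{lemm2.2} does not apply to the truncated function. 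The restriction-norm remark in your last paragraph does not repair this: the only obvious cutoff-free candidate, $i\int_0^T[W_{\R^2}(t-\tau)f(\tau)]|_{y=0}\,d\tau$, does not restrict to $q_b$ on $\R\times(0,T)$, so you are genuinely stuck with estimating multiplication by a Heaviside function in $t$ on ${\cal H}^s$. Since the ${\cal H}^s$ weight behaves in the time frequency like $|\lambda|^{s/2+1/4}$, such multiplication fails to be bounded once $s/2+1/4\ge 1/2$, i.e.\ for all $s\ge 1/2$, and even for $s<1/2$ it would need a separate (Bourgain-space type) argument that you do not supply. So the central step of your proof of \eqref{3.39} has a real gap, and your closing remark that the $s>1/2$ case is handled ``exactly as in Proposition \ref{prop3.2}'' does not address it, because the obstruction sits in the trace estimate, before Proposition \ref{prop3.2} is ever invoked.

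The paper circumvents exactly this difficulty by a different ordering of the steps: it first proves the commutation identity \eqref{3.40}, $W_b[\Phi_f|_{y=0}]=i\int_0^t W_b\big[[W_{\R^2}(\cdot)f(\tau)]|_{y=0}\big](t-\tau,x,y)\,d\tau$, via a uniqueness argument for the linear IBVP, and only then applies Minkowski --- but in the $L^q_t([0,T];W^{s,r}_{xy})$ norm of the output, where the truncation $\zeta_{[\tau,\infty)}(t)$ is harmless; each frozen-$\tau$ term is then bounded by $\|f(\tau)\|_{H^s}$ using precisely the ingredients you had in mind (Lemma \ref{lemm2.2} plus Proposition \ref{prop3.2}), and the range $s\ge 1/2$ is handled by moving derivatives onto $f$ through the explicit formulas and interpolating. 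To fix your argument you would either need to prove boundedness of sharp time truncation on ${\cal H}^s$ (possible at best for $s<1/2$, plus an extra argument for larger $s$) or adopt the paper's commutation identity.
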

 \begin{proof}
The proof of (\ref{3.38}) can be found as the Strichartz estimate in \emph{Section 2.3} of \cite{cazenave}.
For (\ref{3.39}),  we first show that
\begin{align}
W_b \left[\Phi_f|_{y=0}\right] = i \left(\int_0^t W_b\left [ [W_{\R^2}(t)f(\tau)]\Big |_{y=0} \right ](t-\tau, x, y )   \, d\tau \right)\, .\label{3.40}
\end{align}
Here, we note that $f(x, y, \tau)$ is a function of $(x, y)$ with $\tau $ as a fixed parameter and
$W_{R^2} (t) f (\tau)  $ is a function of $( x, y, t )$ (again $\tau$ fixed), which implies that
 $ W_b [ W_{R^2} (t) f (\tau) |_{y=0} ] (t, x, y) $ is a function of $( x, y, t)$ ($\tau$ as parameter).
Let $v $ be the right side of (\ref{3.40}). Then, $v$ satisfies
\begin{align*}
 & iv_t +  v_{xx} + v_{yy} =  - W_b\left [ W_{\R^2}(t )f(\tau )\Big |_{y=0}\right ] \Big | _{ \tau = t} ( 0 , x, y) + i  \int_0^t \bigg ( i \left ( W_b\left [ \left [W_{\R^2}(t)f(\tau) \right ]\Big |_{y=0}\right ] (t-\tau , x, y) \right )_t   +\\
&\qquad
\left ( W_b\left [ \left [W_{\R^2}(t)f(\tau)\right ]\Big |_{y=0}\right ] (t-\tau , x, y)\right )_{xx}
 + \left ( W_b\left [ \left [W_{\R^2}(t)f(\tau)\right ]\Big |_{y=0} \right ] (t-\tau , x, y) \right )_{yy}  \bigg )
 \, d\tau \\
  & = - W_b\left [ W_{\R^2}(t )f(\tau )\Big |_{y=0}\right ] \Big | _{ \tau = t} ( 0 , x, y) = 0 \, ,\\
 & v|_{t =0 } = 0 \, ,\quad v |_{y = 0 } = i \int_0^t \left [ W_{\R^2}(t-\tau)f(\tau)\right ]\Big |_{y=0}   \, d\tau\, .
\end{align*}
Thus, by the uniqueness of solution for linear homogeneous Schr\"odinger equation with zero
initial condition and nonhomogeneous boundary condition,
$$
v = W_b \left [ i \int_0^t \left [ W_{\R^2}(t-\tau)f(\tau)\right ] \, d \tau \Big |_{y=0} \right ]=W_b \left[\Phi_f|_{y=0}\right]\, .
$$
For $ s < 1/2$, by the Minkowski inequality and Proposition 3.5, we have
\begin{align*}
 \left\|W_b \left[\Phi_f|_{y=0}\right] \, \right\|&_{L^q_t\left([0,T]; \, W_{xy}^{s,r}(\R \times \R^+)\right)}\\
 & =   \left\| \int_0^T  \zeta_{[0, t]} (\tau) W_b\left [ [W_{\R^2}(t)f(\tau)]\Big |_{y=0} \right ](t-\tau, x, y ) \, d\tau
\, \right\|_{L^q_t\left([0,T]; \, W_{xy}^{s,r}(\R \times \R^+)\right)} \\
&\leq \int_0^{T} \left\| \zeta_{[\tau, \infty)} (t)W_b\left [ [W_{\R^2}(t)f(\tau)]\Big |_{y=0} \right ](t-\tau, x, y )
\, \right\|_{L^q_t\left([0,T]; \, W_{xy}^{s,r}(\R \times \R^+)\right)} \, d\tau \\
&=  \int_0^{T} \left\|  W_b\left [ [W_{\R^2}(t)f(\tau)]\Big |_{y=0} \right ](t, x, y )
\, \right\|_{L^q_t\left([0,T-\tau ]; \, W_{xy}^{s,r}(\R \times \R^+)\right)} \, d\tau \\
& \leq C\int_0^{T} \left\| f(\tau, x, y )
\, \right\|_{H_{xy}^{s}(\R^2)} \, d\tau =  \|f\|_{L^1_t \left([0,T]; \, H^s_{xy}(\R^2)\right)}\, ,
 \end{align*}
where $\zeta $ is a characteristic function, which gives \eqref{3.39}. For $ s > 1/2$ and s an integer, by the form of $W_b [h]$ defined in Proposition \ref{prop3.1},
we can take the derivatives inside $W_b$, use the form of $\Phi_f ( x, y, t) $ in \eqref{3.37} to move the derivatives to $f$, and then apply \eqref{3.39} with $s=0$. A classical  interpolation theorem yields  \eqref{3.39} for non-integer $s > 0$.
Thus, the proof of the estimates with  the nonhomogeneous term is completed.
    \end{proof}

\section{Local Well-posedness}

In the following, we will show by {\em contraction mapping principle}  that the integral equation
    \begin{align}
        u(x,y,t) =& W_b \left[h - W_{\R^2}(\cdot) \phi \big |_{y=0} - i \left(\int_0^{\cdot} W_{\R^2}(\cdot-\tau) f(\tau) \, d\tau \right)\Big |_{y=0} \right](x,y,t) \nonumber \\
        \quad & + W_{\R^2}(t) \phi (x,y) + \, i \left(\int_0^t W_{\R^2}(t-\tau) f(\tau) \, d\tau \right) (x,y) = \mathcal{A}[u](x,y,t)\label{4.0}
    \end{align}
has a solution in some function spaces with $H^s (\R \times \R^+ )$, i.e., $ \mathcal{A}[u](x,y,t)$ has a fixed point, where $f(u) = \lambda |u|^{p-2} u \  \text{for } p \ge 3 \text{ and } (x,y,t) \in \R \times \R^+ \times (0,T)$.

Define some function spaces as follows.
Let $(q,r)$ be an admissible pair and
    \begin{equation*}
        \X_T^s := C_t \left([0,T]; \, H^s_{xy}(\R \times \R^+)\right) \bigcap L^q_t \left([0,T]; \, W^{s,r}_{xy}(\R \times \R^+)\right)
    \end{equation*}
with $ \|u\|_{\X_T^s} = \sup_{t\in[0,T]} \|u(t)\|_{H^s_{xy}(\R \times \R^+)} + \|u\|_{L^q_t\left([0,T]; \, W^{s,r}_{xy}(\R \times \R^+)\right)}$, as well as
    \begin{equation*}
        \mathcal{Z}_T^s := C_t \left([0,T]; \, H^s_{xy}(\R \times \R^+)\right)
    \end{equation*}
with the regular $\sup$-norm in the time variable, i.e., $\|u\|_{\mathcal{Z}_T^s} = \sup_{t\in[0,T]} \|u(t)\|_{H^s_{xy}(\R \times \R^+)}$.

For some $M>0$, define closed balls in $\X_T^s$ and $\mathcal{Z}_T^s$ as
    \begin{equation*}
        B_M^{\X^s} := \left\{u: \, \|u\|_{\X_T^s} \le M\right\} \quad\text{and } \quad B_M^{\mathcal{Z}^s} := \left\{u: \, \|u\|_{\mathcal{Z}_T^s} \le M\right\}\, .
    \end{equation*}
First, we introduce the following Lemmas.
\begin{lemma}\label{lemm4.1} 
Let $0<s<1$ and $\alpha$ be a nonnegative multi-index with $|\alpha|=s$. If $u: \R^2 \to \C$ and $f\in C^1(\C)$, then
        $$
            \|D^{\alpha} f(u)\|_{L^{r_2}} \lesssim \|f'(u)\|_{L^{r_1}} \|D^{\alpha} u\|_{L^r}
       $$
for $\frac{1}{r_2} = \frac{1}{r_1} + \frac{1}{r}$ with $1<r$, $r_1$, $r_2<\infty$.
In particular, if $|f'(u)|$ is uniformly bounded, then
       $$
            \|D^s f(u)\|_{L^r} \lesssim \|f'(u)\|_{L^{\infty}} \|D^s u\|_{L^r}\, .
       $$
\end{lemma}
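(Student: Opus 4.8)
The plan is to reduce the first inequality to a fractional-derivative estimate of Leibniz/chain-rule type, and then obtain the second inequality as an immediate specialization. For $0<s<1$, the operator $D^s = D^\alpha$ (with $|\alpha|=s$ understood as a fractional derivative of order $s$, i.e.\ $\widehat{D^s u}(\xi)=|\xi|^s\widehat{u}(\xi)$ or the equivalent Gagliardo-type Besov characterization) admits a fractional chain rule: for $f\in C^1(\C)$ one has the pointwise-type bound
\[
\|D^s f(u)\|_{L^{r_2}} \lesssim \|f'(u)\|_{L^{r_1}}\,\|D^s u\|_{L^{r}},\qquad \tfrac{1}{r_2}=\tfrac1{r_1}+\tfrac1r,
\]
valid for $1<r,r_1,r_2<\infty$. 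First I would recall the Gagliardo seminorm representation
\[
\|D^s g\|_{L^{r_2}}^{r_2} \eqsim \int_{\R^2}\left(\int_{\R^2}\frac{|g(x)-g(x+z)|^2}{|z|^{2+2s}}\,dz\right)^{r_2/2}dx,
\]
so that the problem is reduced to estimating the finite differences $f(u(x))-f(u(x+z))$.

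The key step is the elementary inequality, valid for $f\in C^1(\C)$,
\[
|f(u(x))-f(u(x+z))| \le \big(|f'(u(x))| + |f'(u(x+z))|\big)\,|u(x)-u(x+z)|,
\]
or more precisely one uses $f(a)-f(b)=\int_0^1 f'(b+\theta(a-b))(a-b)\,d\theta$ together with the fact that along the segment $|f'|$ is controlled by $\sup$ of its endpoint-type values after a maximal-function argument; the cleanest route is to dominate $|f'(u(x)+\theta(u(x+z)-u(x)))|$ by $Mf'(u)(x)+Mf'(u)(x+z)$ where $M$ is a suitable (Hardy–Littlewood) maximal operator, following the standard Christ–Weinstein fractional chain rule. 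Substituting this bound into the Gagliardo integral, pulling the $z$-integration onto the factor $|u(x)-u(x+z)|^2/|z|^{2+2s}$, and applying Minkowski's integral inequality in the $z$-variable isolates the quantity $\big(\int |u(x)-u(x+z)|^2|z|^{-2-2s}dz\big)^{1/2}$, which is exactly the integrand defining $\|D^s u\|_{L^r}$ pointwise in $x$. One is then left with an integral of the form $\int_{\R^2}\big(Mf'(u)(x)\big)^{r_2}\,\big(D^s u\big)(x)^{r_2}\,dx$, which by Hölder's inequality with exponents $r_1/r_2$ and $r/r_2$ is bounded by $\|Mf'(u)\|_{L^{r_1}}^{r_2}\,\|D^s u\|_{L^{r}}^{r_2}$, and finally by the Hardy–Littlewood maximal theorem (here $r_1>1$ is used) $\|Mf'(u)\|_{L^{r_1}}\lesssim\|f'(u)\|_{L^{r_1}}$. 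Taking $r_2$-th roots gives the first claim. For the second claim, if $|f'(u)|\le C$ uniformly, then in the line above one simply bounds $|f'(u(x)+\theta(\cdots))|\le C$ directly, with no maximal function needed, and the Gagliardo integral collapses at once to $C\,\|D^s u\|_{L^r}$; equivalently it is the endpoint case $r_1=\infty$, $r_2=r$ of the first inequality.

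The main obstacle is making the fractional chain rule rigorous, i.e.\ justifying the pointwise domination of $|f'|$ along segments by maximal functions of the endpoint values — this is the one genuinely nontrivial point and is exactly where the hypotheses $1<r_1$ (so the maximal theorem applies) and $f\in C^1$ (so the mean value form of the difference is available) are used. Everything else — the Gagliardo representation of $\|D^s\cdot\|_{L^r}$ for $0<s<1$, Minkowski's integral inequality, and Hölder — is routine. I would therefore either cite the Christ–Weinstein fractional chain rule directly or reproduce its short proof via the maximal-function argument sketched above, and then the specialization to bounded $f'$ is immediate.
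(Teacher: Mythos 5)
Your proposal is correct and takes essentially the same route as the paper: the paper gives no proof at all, simply citing the fractional chain rule of Christ and Weinstein (Proposition 3.1 of \cite{Weinstein_KdV_DP_SA}), and your sketch is just the standard maximal-function proof of that cited result. Citing it directly, as you offer to do, is exactly what the paper does.
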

\noindent This lemma is the chain rule for fractional derivatives (see Proposition 3.1 in \cite{Weinstein_KdV_DP_SA}).
\begin{lemma}\label{lemm4.2} 
Let $0<s<1$, $u$, $v: \R^2 \to \C$. Then
$$
            \|D^{\alpha} (uv)\|_{L^{r_2}} \lesssim \|v\|_{L^{r_1}} \|D^{\alpha} u\|_{L^r} + \|u\|_{L^{\tilde{r}_1}} \|D^{\alpha} v\|_{L^{\tilde{r}}}
$$
for $\frac{1}{r_2} = \frac{1}{r_1} + \frac{1}{r} = \frac{1}{\tilde{r}_1} + \frac{1}{\tilde{r}}$ with $1<r$, $r_1$, $\tilde{r}$, $\tilde{r}_1$, $r_2<\infty$.
\end{lemma}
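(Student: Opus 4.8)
The final statement, Lemma \ref{lemm4.2}, is the fractional Leibniz (Kato--Ponce) inequality in the range $0<s<1$, and I would prove it by a Littlewood--Paley paraproduct decomposition; throughout, $D^\alpha$ with $|\alpha|=s$ is interpreted (as in Lemma \ref{lemm4.1}) as the Riesz derivative $D^s=(-\Delta)^{s/2}$ on $\R^2$. Fix a dyadic partition of unity, let $P_j$ be the associated Littlewood--Paley projections and $P_{<j}=\sum_{k<j}P_k$, and apply Bony's decomposition
\[
uv = \sum_j P_{<j-2}u\,P_j v \;+\; \sum_j P_j u\,P_{<j-2}v \;+\; \sum_{|j-k|\le 2} P_j u\,P_k v \;=:\; \Pi_1+\Pi_2+\Pi_3 .
\]
The plan is to estimate each of the three paraproducts separately and add; $\Pi_1$ and $\Pi_2$ will each be controlled by one of the two terms on the right-hand side of the lemma, and $\Pi_3$ by either one.

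For the ``low--high'' term $\Pi_1$, in which $v$ carries the high frequency $\sim 2^j$, the $j$-th summand has spectrum in an annulus of size $\sim 2^j$, so $D^s$ acts on it essentially as multiplication by $2^{sj}$. Using the Littlewood--Paley square function theorem and the vector-valued $L^{r_2}(\ell^2_j)$-boundedness of the family $\big(2^{-sj}D^s\widetilde P_j\big)_j$ (both valid since $1<r_2<\infty$), then the pointwise bound $\sup_j|P_{<j}u|\lesssim \mathcal M u$ for the Hardy--Littlewood maximal operator, H\"older with $\tfrac1{r_2}=\tfrac1{\tilde r_1}+\tfrac1{\tilde r}$, the $L^{\tilde r_1}$-boundedness of $\mathcal M$, and the square-function characterization $\big\|(\sum_j 2^{2sj}|P_j v|^2)^{1/2}\big\|_{L^{\tilde r}}\eqsim\|D^s v\|_{L^{\tilde r}}$, one obtains
\[
\|D^s\Pi_1\|_{L^{r_2}} \lesssim \Big\|\,\sup_j|P_{<j}u|\,\big(\sum_j 2^{2sj}|P_jv|^2\big)^{1/2}\,\Big\|_{L^{r_2}} \lesssim \|u\|_{L^{\tilde r_1}}\,\|D^s v\|_{L^{\tilde r}}.
\]
The ``high--low'' term $\Pi_2$ is treated identically with the roles of $u$ and $v$ interchanged, giving $\|D^s\Pi_2\|_{L^{r_2}}\lesssim \|v\|_{L^{r_1}}\|D^s u\|_{L^{r}}$.

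The main obstacle is the ``high--high'' diagonal term $\Pi_3=\sum_{|j-k|\le 2}P_j u\,P_k v$: its $j$-th summand has spectrum only in a \emph{ball} of radius $\sim 2^j$ rather than an annulus, so $D^s$ no longer acts diagonally and a square function cannot be inserted directly. Here I would use the standard auxiliary fact that if $s>0$, $1<p<\infty$ and $\operatorname{supp}\widehat{F_j}\subseteq\{|\xi|\le C2^j\}$, then $\big\|D^s\sum_j F_j\big\|_{L^p}\lesssim \big\|(\sum_j 2^{2sj}|F_j|^2)^{1/2}\big\|_{L^p}$; this is proved by writing $D^s\sum_j F_j=\sum_k P_k D^s\sum_{j\gtrsim k}F_j$, using $\|D^sP_k\|_{L^p\to L^p}\lesssim 2^{sk}$ to sum the geometric tail $\sum_{j\ge k}2^{s(k-j)}$ (this is where $s>0$ is essential), and then the square function theorem together with Cauchy--Schwarz. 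Applying it with $F_j=P_j u\,\widetilde P_j v$ and then $\sup_j|\widetilde P_j v|\lesssim \mathcal M v$, H\"older, the $L^{r_1}$-boundedness of $\mathcal M$, and $\big\|(\sum_j 2^{2sj}|P_j u|^2)^{1/2}\big\|_{L^r}\eqsim\|D^s u\|_{L^r}$ gives $\|D^s\Pi_3\|_{L^{r_2}}\lesssim \|v\|_{L^{r_1}}\|D^s u\|_{L^r}$, and adding the three estimates proves the lemma. Every tool invoked --- the Fefferman--Stein vector-valued maximal inequality, Littlewood--Paley square functions, and the $2^{sk}$ operator-norm bound for $D^sP_k$ --- requires only $1<r,\tilde r,r_1,\tilde r_1,r_2<\infty$ and $0<s<1$, so the hypotheses as stated suffice; alternatively the inequality is classical and could simply be quoted as the Kato--Ponce / Kenig--Ponce--Vega fractional Leibniz rule.
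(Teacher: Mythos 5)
Your argument is correct, but it is far more self-contained than what the paper does: the paper offers no proof at all for this lemma, simply quoting it as the product (Leibniz) rule for fractional derivatives, Proposition 3.3 of Christ--Weinstein \cite{Weinstein_KdV_DP_SA} (exactly the fallback you mention in your last sentence). Your route --- Bony's paraproduct decomposition, treating the two low--high pieces via the square-function characterization of $\|D^s\cdot\|_{L^p}$ together with the Fefferman--Stein maximal bound for the low-frequency factor, and handling the high--high diagonal with the standard lemma that for $s>0$ ball-supported pieces can be resummed at the cost of the weighted square function $\bigl(\sum_j 2^{2sj}|F_j|^2\bigr)^{1/2}$ --- is the standard modern proof of the Kato--Ponce/Kenig--Ponce--Vega estimate, and each step you invoke (vector-valued square functions, $\|D^sP_k\|_{L^p\to L^p}\lesssim 2^{sk}$, $\mathcal M$-bounds) indeed needs only $1<r,r_1,\tilde r,\tilde r_1,r_2<\infty$ and $0<s<1$, matching the hypotheses. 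Two small points worth making explicit if you write this out in full: you should say whether $D^s$ and the $P_j$ are homogeneous or inhomogeneous (the equivalence $\|(\sum_j 2^{2sj}|P_jv|^2)^{1/2}\|_{L^p}\eqsim\|D^sv\|_{L^p}$ is cleanest in the homogeneous calculus, modulo polynomials), and in the paper's application the inequality is used for functions on $\R\times\R^+$ after an extension operator, which the paper handles separately following the lemma. What your approach buys is a transparent, checkable proof with the exact exponent constraints visible; what the paper's citation buys is brevity, since the estimate is classical.
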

\noindent This lemma is the product rule for fractional derivatives (see Proposition 3.3 in \cite{Weinstein_KdV_DP_SA}).
\smallskip

Lemmas \ref{lemm4.1} and \ref{lemm4.2} are valid for functions in $\R^2$. For functions in $\R\times \R^+$, an extension operator
$E$ such that $Eu \in W^{s, p } (\R^2 ) $ and $Eu | _{\R \times \R^+ } = u $ for any $ u \in W^{s, p } (\R\times \R^+ )$ satisfying
$$
\| Eu \|_{ W^{s, p } (\R^2 )} \leq C \| u \|_{ W^{s, p } (\R\times \R^+ )}\,
$$
can be used (see Chapter 5, \cite{Adams_Sobo}). Therefore, Lemmas \ref{lemm4.1} and \ref{lemm4.2} can be used for functions in $\R\times \R^+ $ except that the derivatives on the right hand sides of inequalities are replaced by $W^{|\alpha|, p }$-norms of $u$ and $v$.

\begin{thm}\label{theo4.3} 
        Let $\mu >0 $ be a constant so that $\displaystyle \|\varphi\|_{H^s(\R \times \R^+)} +
        \|h\|_{{\cal H}^s (0,T)} \le \mu$
        and $\varphi, h$ satisfy some compatibility conditions.
        \begin{itemize}
        \item[(a)]
            For $0 \le s <1$ and $ 3 \leq p < (4-2s)/(1-s) $, there exists a $T>0$ such that for a given admissible pair $(q,r)$, there is a unique solution $u \in \X^s_T$ of (\ref{4.0}). In particular, we have
            \begin{align}
                & \left\|\mathcal{A}[u]-\mathcal{A}[v]\right\|_{\X^s_T} \le (1/2)  \, \|u-v\|_{\X^s_T} \, , \label{4.1} \\
                & \left\|\mathcal{A}[u]\right\|_{\X^s_T} \le M \label{4.2}
            \end{align}
           for any $u$ and $v \in B_M^{\X^s}$. Moreover, if $ p = (4-2s)/(1-s)$, then $\mu$ must be small.
        \item[(b)]
            For $s=1$, there exists a $ T>0$ such that for any admissible pair $(q,r)$
            with $ r > 2$ the integral equation (\ref{4.0}) has a unique solution $u \in \X^1_T$. In particular, we have
            \begin{align}
                & \left\|\mathcal{A}[u]-\mathcal{A}[v]\right\|_{\X^1_T} \le (1/2) \, \|u-v\|_{\X^1_T} \, , \label{4.3} \\
                & \left\|\mathcal{A}[u]\right\|_{\X^1_T} \le M \label{4.4}
            \end{align}
for any $u$ and $v \in B_M^{\X^1}$.
        \item[(c)]
            For $s>1$, we assume that $p\ge s+1$ if $s\in\Z$ or $p\ge[s]+2$ if $s\notin\Z$ only when $p$ is not an even integer. There exists a $ T>0$ such that the integral equation (\ref{4.0}) has   a unique solution $u \in \mathcal{Z}^s_T = C_t \left([0,T]; \, H^s_{xy}(\R \times \R^+)\right)$. Also, for $u$ and $v \in B_M^{\mathcal{Z}^s}$
            \begin{align}
                & \left\|\mathcal{A}[u]-\mathcal{A}[v]\right\|_{\mathcal{Z}^0_T} \le (1/2) \, \|u-v\|_{\mathcal{Z}^0_T} \, , \label{4.5} \\
                &  \left\|\mathcal{A}[u]\right\|_{\mathcal{Z}^s_T} \le M \, .\label{4.6}
            \end{align}
        \end{itemize}
\end{thm}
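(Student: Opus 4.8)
The plan is to prove all three parts by a \emph{contraction mapping} argument for the operator $\mathcal{A}$ of (\ref{4.0}); the only ingredient beyond the linear estimates of Section~3 is a nonlinear estimate for $f(u)=\lambda|u|^{p-2}u$ supplied by Lemmas~\ref{lemm4.1}--\ref{lemm4.2}. I would first split $\mathcal{A}[u]=\mathcal{L}+\mathcal{N}[u]$ into the part
\[
\mathcal{L}:=W_b[h]-W_b\big[(W_{\R^2}(\cdot)\phi)\big|_{y=0}\big]+W_{\R^2}(t)\phi ,
\]
which does not depend on $u$, and the nonlinear part $\mathcal{N}[u]:=\Phi_f-W_b\big[\Phi_f\big|_{y=0}\big]$ with $f=\lambda|u|^{p-2}u$. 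Proposition~\ref{prop3.2} (used also with the admissible pair $(\infty,2)$ to recover the $C_tH^s$ component of the $\X^s_T$-norm), Proposition~\ref{prop3.4}, and the hypothesis $\|\varphi\|_{H^s}+\|h\|_{\mathcal{H}^s}\le\mu$ give $\|\mathcal{L}\|_{\X^s_T}\le C_0\mu$ with $C_0$ independent of $T$ and $u$, while the continuity in $t$ of each summand follows by a density argument as in Section~2. For $\mathcal{N}[u]$, estimates (\ref{3.38}) and (\ref{3.39}) reduce everything to bounding $\|f\|_{L^{\gamma'}_tW^{s,\rho'}_{xy}}+\|f\|_{L^1_tH^s_{xy}}$ for a suitable auxiliary admissible pair $(\gamma,\rho)$.

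For $0\le s<1$ the nonlinear estimate is the core of part (a). Extending to $\R^2$ by the operator $E$ and applying Lemma~\ref{lemm4.1} with $f'(u)\sim|u|^{p-2}$ gives, pointwise in $t$,
\[
\|D^s f(u)\|_{L^b_{xy}}\lesssim\|\,|u|^{p-2}\,\|_{L^c_{xy}}\,\|D^s u\|_{L^r_{xy}}=\|u\|_{L^{c(p-2)}_{xy}}^{p-2}\,\|D^s u\|_{L^r_{xy}},\qquad\tfrac1b=\tfrac1c+\tfrac1r ,
\]
after which one controls $\|u\|_{L^{c(p-2)}_{xy}}$ by the Sobolev embeddings $H^s(\R^2)\hookrightarrow L^{2/(1-s)}(\R^2)$ and $W^{s,r}(\R^2)\hookrightarrow L^\theta(\R^2)$, distributing the $p-2$ factors of $u$ between the $L^\infty_tH^s$ and $L^q_tW^{s,r}$ norms and using H\"older in $t$. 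The hard part will be exactly this bookkeeping: one must choose the admissible pair $(q,r)$ so that \emph{all} the H\"older and Sobolev relations close simultaneously and so that a positive power $T^{a(s,p)}$ of the time interval is left over. A direct computation shows $a(s,p)>0$ precisely in the $\dot H^s$-subcritical range $3\le p<(4-2s)/(1-s)$ (equivalently $s>s_c=(p-4)/(p-2)$) and $a(s,p)=0$ in the critical case $p=(4-2s)/(1-s)$. The outcome is $\|\mathcal{N}[u]\|_{\X^s_T}\le C_1T^{a(s,p)}M^{p-1}$ for $u\in B_M^{\X^s}$.

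For the difference, the elementary inequality $|f(u)-f(v)|\lesssim(|u|^{p-2}+|v|^{p-2})|u-v|$ together with Lemma~\ref{lemm4.2} for the fractional-derivative piece yields, by the same bookkeeping, $\|\mathcal{N}[u]-\mathcal{N}[v]\|_{\X^s_T}\le C_2T^{a(s,p)}M^{p-2}\|u-v\|_{\X^s_T}$ for $u,v\in B_M^{\X^s}$. I would then set $M:=2C_0\mu$ and pick $T>0$ small enough that $C_1T^{a}M^{p-1}\le M/2$ and $C_2T^{a}M^{p-2}\le1/2$; this gives (\ref{4.1})--(\ref{4.2}), so $\mathcal{A}$ maps $B_M^{\X^s}$ into itself and is a contraction there, and the Banach fixed point theorem yields the unique $u\in\X^s_T$. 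In the critical case $p=(4-2s)/(1-s)$ the factor $T^{a}$ is absent, so one instead makes $\mu$ (hence $M$) small so that $C_1M^{p-1}\le M/2$ and $C_2M^{p-2}\le1/2$. This proves (a); part (b) is the same with $s=1$, where Lemmas~\ref{lemm4.1}--\ref{lemm4.2} are replaced by the ordinary Leibniz rule and every $p\ge3$ is $\dot H^1$-subcritical, so $a(1,p)>0$ for any admissible pair with $2<q<\infty$, giving (\ref{4.3})--(\ref{4.4}).

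For part (c), $s>1$, I would drop the $L^q_tW^{s,r}$ norm and work in $\mathcal{Z}^s_T=C_t([0,T];H^s_{xy})$, since $H^s(\R^2)\hookrightarrow L^\infty(\R^2)$. The Moser-type estimate $\|\,|u|^{p-2}u\,\|_{H^s(\R^2)}\lesssim\|u\|_{L^\infty}^{p-2}\|u\|_{H^s(\R^2)}$ holds for integer $s$ with no restriction, and for non-integer $s$ (and $p$ not an even integer) precisely under the stated conditions $p\ge s+1$ or $p\ge[s]+2$, which guarantee that $z\mapsto|z|^{p-2}z$ is smooth enough; combined with Propositions~\ref{prop3.4}--\ref{prop3.5} at $(q,r)=(\infty,2)$ this gives $\|\mathcal{N}[u]\|_{\mathcal{Z}^s_T}\le CT^{b}M^{p-1}$ and hence (\ref{4.6}). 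To close the contraction without losing derivatives I would estimate the difference in the weaker norm $\mathcal{Z}^0_T=C_tL^2$: using $|f(u)-f(v)|\lesssim(\|u\|_{L^\infty}^{p-2}+\|v\|_{L^\infty}^{p-2})|u-v|$, $H^s\hookrightarrow L^\infty$, and (\ref{3.39}) with $s=0$, one gets $\|\mathcal{N}[u]-\mathcal{N}[v]\|_{\mathcal{Z}^0_T}\le CT^{b}M^{p-2}\|u-v\|_{\mathcal{Z}^0_T}$, i.e.\ (\ref{4.5}); since $B_M^{\mathcal{Z}^s}$ is closed in the $\mathcal{Z}^0_T$-metric, $\mathcal{A}$ has a unique fixed point there, which is the solution in $\mathcal{Z}^s_T$. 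In all cases the ``compatibility conditions'' enter only to guarantee that the arguments of $W_b$ lie in $\mathcal{H}^s(0,T)$, which is automatic from Lemma~\ref{lemm2.2} and (\ref{3.39}).
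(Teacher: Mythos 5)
Your proposal is correct and follows essentially the same route as the paper: a contraction argument for $\mathcal{A}$ based on the linear and boundary-integral estimates of Propositions~\ref{prop3.2}--\ref{prop3.5}, the fractional chain/product rules of Lemmas~\ref{lemm4.1}--\ref{lemm4.2} combined with Sobolev/Gagliardo--Nirenberg embeddings and H\"older in time for the nonlinearity (with the critical case $p=(4-2s)/(1-s)$ handled by taking $\mu$ small), and for $s>1$ the standard two-norm contraction with the difference measured only in $C_tL^2_{xy}$. The paper merely makes your ``bookkeeping'' explicit by fixing the admissible pair $r=2(p-1)/(1+s(p-2))$, $q=2(p-1)/((1-s)(p-2))$, which produces exactly the time factor $T^{1-(1-s)(p-2)/2}$ that you denote $T^{a(s,p)}$.
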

\begin{proof}
For $ 0\leq s < 1$, let
\begin{equation}
                r = \frac{2(p-1)}{1+s(p-2)} \, \text{, } \qquad q = \frac{2(p-1)}{(1-s)(p-2)} \, , \label{4.8}
\end{equation}
and assume that
\begin{equation} 
                3 \le p \le \frac{4-2s}{1-s}\, .\label{4.9}
\end{equation}
It is easy to verify that $r>2$ and $(q,r)$ is an admissible pair.
We will use the contraction mapping theorem.
First, let $3 \le p < \frac{4-2s}{1-s}$, the subcritical case.
In \cite{cazenave} (the proof of \emph{Theorem 4.6.1}), we find the estimate
                \begin{equation}
                    \left\|f(u)\right\|_{L^{q'}_t\left([0,T]; L^{r'}_{xy}(\R \times \R^+)\right)} \lesssim T^{\frac{q-p}{q}} \|u\|_{L^q_t\left([0,T]; L^r_{xy}(\R \times \R^+)\right)}^{p-1}\, . \label{4.10}
                \end{equation}
Let $\alpha \in \R^+ \times \R^+ $ be a multi-index such that $|\alpha|=s$. We know that $u: \R^2 \to \R^2$ and $f\in C^1(\C)$. For $0\le s < 1$, the chain rule for the fractional derivative here gives
                \begin{equation*}
                    \|D^{\alpha} f(u)\|_{L^{r_2}} \lesssim \|f'(u)\|_{L^{r_1}} \| u\|_{W^{s,r}}
                \end{equation*}
for $\frac{1}{r_2} = \frac{1}{r_1} + \frac{1}{r}$ with $1<r$, $r_1$, $r_2<\infty$.
Let $u_R$ and $u_I$ be the real and imaginary parts of $u$, respectively, with which $f(u) = \lambda |u|^{p-2} u = \lambda u_R \left(u_R^2 + u_I^2\right)^{\frac{p}{2}-1} + i \lambda u_I \left(u_R^2 + u_I^2\right)^{\frac{p}{2}-1}$. Therefore
\begin{equation} 
                    f'(u) = \lambda
                    \left(
                    \begin{array}{ll}
                        (p-2)|u|^{p-4} u_R^2 + |u|^{p-2} & (p-2)|u|^{p-4} u_R u_I \\
                        (p-2)|u|^{p-4} u_R u_I & (p-2)|u|^{p-4} u_I^2 + |u|^{p-2}
                    \end{array}
                    \right)\, ,\label{4.11}
\end{equation}
which yields that $|f'(u)| \lesssim |u|^{p-2} < \infty$ for $p\ge2$.  In fact, for $k\in\Z^+$, we continue differentiating to have higher order of derivatives of $f$. If $p\ge k+1$, it is observed that $(k-1)$-th derivative $f^{(k-1)}$ can be composed of terms in the form $|u|^{a_1}u_R^{a_2}u_I^{a_3}$ where $a_1+a_2+a_3=p-k$. Through a simple calculation, $$\partial_{u_R} \left(|u|^{a_1}u_R^{a_2}u_I^{a_3}\right) = a_1|u|^{a_1-2}u_R^{a_2+1}u_I^{a_3} + a_2|u|^{a_1}u_R^{a_2-1}u_I^{a_3}.$$ Thus $f^{(k)}$ is term of $|u|^{b_1}u_R^{b_2}u_I^{b_3}$ where $b_1+b_2+b_3=p-k-1$. Moreover, for $p\ge k+1$
        \begin{equation*}\label{NLSE_IB_2d_nonlin_Derivative_kth_est} 
            |f^{(k)}(u)| \lesssim |u|^{p-k-1} \, .
        \end{equation*}

For $u \in B_M^{\X^s}$,
                \begin{align*}
                     \left\|D^{\alpha} f(u)(t)\right\|_{L^2} &\lesssim \left\|f'(u)(t)\right\|_{L^{\frac{2r}{r-2}}} \| u(t)\|_{W^{s,r}} \\
                    &\lesssim \left\||u(t)|^{p-2}\right\|_{L^{\frac{2r}{r-2}}} \| u\|_{W^{s,r}} = \|u(t)\|_{L^{\frac{2r(p-2)}{r-2}}}^{p-2} \| u(t)\|_{W^{s,r}}\, .
                \end{align*}
Since $r = \frac{2(p-1)}{1+s(p-2)}$ by (\ref{4.8}), then $\frac{r-2}{2r(p-2)} = \frac{1}{r} - \frac{s}{2}$. Thus, according to Gagliardo-Nirenberg inequality for fractional derivatives \cite{HMOW2011}, one can obtain that
                \begin{equation*}
                    \left\|D^{\alpha} f(u)(t)\right\|_{L^2} \lesssim \| u(t)\|_{W^{s,r}}^{p-2} \cdot \| u(t)\|_{W^{s,r}}\, .
                \end{equation*}
Because (\ref{4.9}) exactly leads to $q \ge \frac{q(p-2)}{q-1}$, it follows that by taking norm with respect to $t$,
\begin{align}
                     \left\|D^{\alpha} f(u)\right\|& _{L^1_t\left([0,T]; L^2_{xy}(\R \times \R^+)\right)} \lesssim \| u\|_{L^{\frac{q(p-2)}{q-1}}_t\left([0,T]; W^{s,r}_{xy}(\R \times \R^+)\right)}^{p-2} \cdot \| u\|_{L^q_t\left([0,T]; W^{s,r}_{xy}(\R \times \R^+)\right)} \nonumber \\
                    &\lesssim T^{\frac{q-1}{q}-\frac{p-2}{q}} \| u\|_{L^q_t\left([0,T]; W^{s,r}_{xy}(\R \times \R^+)\right)}^{p-1} = T^{\frac{q-p+1}{q}} \| u\|_{L^q_t\left([0,T]; W^{s,r}_{xy}(\R \times \R^+)\right)}^{p-1}\, ,\label{4.14}
\end{align}
Also it is straightforward to see that
                \begin{equation}
                    \left\|f(u)\right\|_{L^1_t\left([0,T]; L^2_{xy}(\R \times \R^+)\right)} \lesssim T^{\frac{q-p+1}{q}} \|u\|_{L^q_t\left([0,T]; L^r_{xy}(\R \times \R^+)\right)}^{p-1}\, .\label{4.15}
                \end{equation}
Hence,  (\ref{4.14}) and (\ref{4.15}) imply
\begin{align}
                    \left\|f(u)\right\|_{L^1_t\left([0,T]; H^s_{xy}(\R \times \R^+)\right)} \lesssim& T^{\frac{q-p+1}{q}} \|u\|_{L^q_t\left([0,T]; W^{s,r}_{xy}(\R \times \R^+)\right)}^{p-1} \nonumber \\
                    =& T^{1 - \frac{(1-s)(p-2)}{2}} \|u\|_{L^q_t\left([0,T]; W^{s,r}_{xy}(\R \times \R^+)\right)}^{p-1}\, .\label{4.16}
\end{align}

Let $u, v \in \X^s_T$ and
$w = u \cdot \theta + v \cdot (1-\theta)$ for $\theta\in[0,1]$. Then
    \begin{equation*}
        |u|^{p-2}u - |v|^{p-2}v = \int_0^1 \left[\dfrac{p}{2} |w|^{p-2} + \left(\dfrac{p}{2}-1\right) |w|^{p-4} w^2\right] \cdot (u-v) \, d\theta \, .
    \end{equation*}
By Lemma \ref{lemm4.2},
    \begin{align*}
        \Big \|D^{\alpha} \big [|u|^{p-2}u(t) - & |v|^{p-2}v(t)\big ] \Big \|_{L^{r'}} \\
        =& \left\|\int_0^1 D^{\alpha}\left[\frac{p}{2} |w|^{p-2} + \left(\frac{p}{2}-1\right) |w|^{p-4} w^2\right] \cdot (u-v) \, d\theta \right\|_{L^{r'}} \\
        \le& \sup_{\theta\in[0,1]} \left\|D^{\alpha}\left[\left(\frac{p}{2} |w|^{p-2} + \left(\frac{p}{2}-1\right) |w|^{p-4} w^2\right) \cdot (u-v)\right] \right\|_{L^{r'}} \\
        \le& \sup_{\theta\in[0,1]} \left\| \left[\frac{p}{2} |w|^{p-2} + \left(\frac{p}{2}-1\right) |w|^{p-4} w^2\right] \right\|_{W^{s, r_1}} \cdot \|u-v\|_{L^{r_2}} \\
        & + \sup_{\theta\in[0,1]} \left\|\frac{p}{2} |w|^{p-2} + \left(\frac{p}{2}-1\right) |w|^{p-4} w^2 \right\|_{L^{\frac{r}{r-2}}} \cdot \| u-v\|_{W^{s,r}} \, .
    \end{align*}
For each $t\in[0,T]$ and the admissible pair $(q,r)$ in (\ref{4.8}), we notice that $\frac{1}{r} - \frac{s}{2} = \frac{1-s}{2(p-1)}$. Therefore $W^{s,r} \hookrightarrow L^{\frac{2(p-1)}{1-s}}$. Let $r_2 = \frac{2(p-1)}{1-s}$ so that $$\dfrac{1}{r_1} = \frac{1}{2} - \frac{1}{r_2} = \frac{p+s-2}{2(p-1)} = \frac{1}{r} + \frac{(1-s)(p-3)}{2(p-1)} \, .$$ Moreover, it is clear that $\frac{2\rho(p-2)}{\rho-2} = \frac{2(p-1)}{1-s}$. Thus for $p>3$, we note that the derivative of $\frac{p}{2} |w|^{p-2} + \left(\frac{p}{2}-1\right) |w|^{p-4} w^2$ is bounded by $|w|^{p-3}$. By Lemma \ref{lemm4.1},
    \begin{align*}
         \Big \|D^{\alpha} &\big [|u|^{p-2}u(t) -  |v|^{p-2}v(t)\big ] \Big \|_{L^2} \\
        \lesssim& \sup_{\theta\in[0,1]} \left\||w(t)|^{p-3} \right\|_{L^{\frac{2(p-1)}{(1-s)(p-3)}}} \cdot \| w(t)\|_{W^{s,r}} \cdot \|u(t)-v(t)\|_{L^{\frac{2(p-1)}{1-s}}} \\
        & + \sup_{\theta\in[0,1]} \|w(t)\|_{L^{\frac{r(p-2)}{r-2}}}^{p-2} \cdot \| u(t)-v(t)\|_{W^{s,r}} \\
        =& \sup_{\theta\in[0,1]} \|w(t)\|_{L^{\frac{2(p-1)}{1-s}}}^{p-3} \cdot \| w(t)\|_{W^{s,r}} \cdot \|u(t)-v(t)\|_{L^{\frac{2(p-1)}{1-s}}} + \sup_{\theta\in[0,1]} \|w(t)\|_{L^{\frac{2(p-1)}{1-s}}}^{p-2} \cdot \| u(t)-v(t) \|_{W^{s,r}} \\
        \lesssim& \left(\| u(t)\|_{W^{s,r}}^{p-2} + \| v(t)\|_{W^{s,r}}^{p-2}\right) \cdot \|u(t)-v(t)\|_{W^{s,r}}\, .
    \end{align*}
For $p=3$, we see that $\frac{1}{2} = \frac{1+s}{4} + \frac{1-s}{4} = \frac{1}{r} + \frac{1-s}{4}$ and $W^{s,r} \hookrightarrow L^{\frac{4}{1-s}}$. By Lemma (\ref{lemm4.1}), it is shown that
    \begin{align*}
         & \left\|D^{\alpha} \left[|u|^{p-2}u(t) - |v|^{p-2}v(t)\right] \right\|_{L^2} \\
        &\lesssim \sup_{\theta\in[0,1]} \left\| w(t) \right\|_{W^{s,r}} \cdot \|u(t)-v(t)\|_{L^{\frac{4}{1-s}}} + \sup_{\theta\in[0,1]} \|w(t)\|_{L^{\frac{4}{1-s}}} \cdot \| u(t)-v(t)\|_{W^{s,r}} \\
        &\lesssim \left(\| u(t)\|_{W^{s,r}}^{p-2} + \| v(t)\|_{W^{s,r}}^{p-2}\right) \cdot \|(u(t)-v(t))\|_{W^{s,r}} \, .
    \end{align*}
Because of (\ref{4.8}) and (\ref{4.9}), we have $q \ge \frac{q(p-2)}{q-1}$ and
\begin{align*}
                     & \left\| D^\alpha (f(u)-f(v)) \right\|_{L^1_t\left([0,T]; L^2_{xy}(\R \times \R^+) \right)} \\
                    &\quad \lesssim \left(\|u\|_{L^{\frac{q(p-2)}{q-1}}_t\left([0,T]; W^{s,r}_{xy}(\R \times \R^+) \right)}^{p-2} + \|v\|_{L^{\frac{q(p-2)}{q-1}}_t\left([0,T]; W^{s,r}_{xy}(\R \times \R^+) \right)}^{p-2} \right) \cdot \|u-v\|_{L^q_t\left([0,T]; W^{s,r}_{xy}(\R \times \R^+) \right)} \\
                    &\quad \le T^{\frac{q-p+1}{q}} \left(\|u\|_{L^q_t\left([0,T]; W^{s,r}_{xy}(\R \times \R^+) \right)}^{p-2} + \|v\|_{L^q_t\left([0,T]; W^{s,r}_{xy}(\R \times \R^+) \right)}^{p-2} \right) \cdot \|u-v\|_{L^q_t\left([0,T]; W^{s,r}_{xy}(\R \times \R^+) \right)}\\
                    &\quad  \lesssim T^{1 - \frac{(1-s)(p-2)}{2}} \left(\|u\|_{L^q_t\left([0,T]; W^{s,r}_{xy}(\R \times \R^+) \right)}^{p-2} + \|v\|_{L^q_t\left([0,T]; W^{s,r}_{xy}(\R \times \R^+) \right)}^{p-2} \right) \|u-v\|_{L^q_t\left([0,T]; W^{s,r}_{xy}(\R \times \R^+)\right)}\, .
\end{align*}

Now, we choose $u$ in $B_M^{\X^s}$, that is, $\|u\|_{\X^s_T} \le M$. We study the operator ${\mathcal{A}}(u) $ in (\ref{4.0}) with the estimate
                \begin{equation*}
                    \left\|\mathcal{A}[u]\right\|_{\X^s_T} = \left\|\mathcal{A}[u]\right\|_{L^{\infty}_t\left([0,T]; H^s_{xy}(\R \times \R^+)\right)} + \left\|\mathcal{A}[u]\right\|_{L^{q}_t\left([0,T]; W^{s,r}_{xy}(\R \times \R^+)\right)} \, .
                \end{equation*}
By applying \emph{Proposition~\ref{prop3.2}}, \emph{\ref{prop3.4}}, and \emph{\ref{prop3.5}}, we obtain
\begin{align*}
                    &  \left\|\mathcal{A}[u]\right\|_{\X^s_T} \\
                    &\quad \le \|W_b [h]\|_{\X^s_T} + \left\|W_{\R^2}\phi \right\|_{\X^s_T} + \left\|\Phi_f \right\|_{\X^s_T} + \left\|W_b \left[(W_{\R^2}\phi)\big | _{y=0}\right] \right\|_{\X^s_T} + \left\|W_b \left[(\Phi_f)\big |_{y=0}\right] \right\|_{\X^s_T} \\
                    &\quad \lesssim \|h\|_{{\cal H}^s (0,T)} + \left\|\phi \right\|_{H^s_{xy}} + \left\|f \right\|_{L^1_t \left([0,T]; H^s_{xy} \right)} \\
                    &\quad \le \mu + \left\|f \right\|_{L^1_t \left([0,T]; H^s_{xy} \right)} \\
                    &\quad {\lesssim} \mu + T^{1 - \frac{(1-s)(p-2)}{2}} \|u\|_{L^{q}_t\left([0,T]; W^{s,r}_{xy}(\R \times \R^+)\right)}^{p-1} \\
                    &\quad \le \mu + T^{1 - \frac{(1-s)(p-2)}{2}} \|u\|_{\X^s_T}^{p-1}\, ,
\end{align*}
where ${(\ref{4.16})}$ has been used.
Since $u \in B_M^{\X^s}$, for some $C_0>0$,
                \begin{equation}
                    \left\|\mathcal{A}[u]\right\|_{\X^s_T} \le C_0 \left(\mu + T^{1 - \frac{(1-s)(p-2)}{2}} M^{p-1} \right)\, .\label{4.17}
                \end{equation}
Then, select $M$ sufficiently large such that $M>C_0\mu$ and let $T$ be small enough with
                \begin{equation}
                    0 < T \le \left(\frac{M-C_0\mu}{C_0 M^{p-1}}\right)^{\frac{2}{2-(1-s)(p-2)}}\, .\label{4.18}
                \end{equation}
This implies that for $\|u\|_{\X^s_T} \le M$, $\left\|\mathcal{A}[u]\right\|_{\X^s_T} \le M$, which gives (\ref{4.2}).

Moreover, for $\left\|\mathcal{A}[u]-\mathcal{A}[v]\right\|_{\X^s_T}$  and the estimate (\ref{4.1}), let $\Phi_f(u)$ denote the operator on the nonlinearity $f$ given by $u$, where $f \in L^1_t \left([0,T]; H^s_{xy}\right)$.
If we choose $u$ and $v$ in $B_M^{\X^s}$, then
                \begin{align*}
                    & \left\|\mathcal{A}[u]-\mathcal{A}[v]\right\|_{\X^s_T} \\
                    &\quad \le \left\|\Phi_f(u)-\Phi_f(v) \right\|_{\X^s_T} + \left\|W_b \left[(\Phi_f(u)-\Phi_f(v))\big |_{y=0}\right] \right\|_{\X^s_T} \\
                    &\quad \lesssim \ \left\|f(u)-f(v) \right\|_{L^1_t \left([0,T]; H^s_{xy} \right)} \\
                    &\quad {\lesssim} \ + T^{1 - \frac{(1-s)(p-2)}{2}} \left(\|u(t)\|_{L^{q}_t\left([0,T]; W^{s,r}_{xy} \right)}^{p-2} + \|v(t)\|_{L^{q}_t\left([0,T]; W^{s,r}_{xy} \right)}^{p-2} \right) \|u-v\|_{L^{q}_t\left([0,T]; W^{s,r}_{xy} \right)} \\
                    &\quad \lesssim \ T^{1 - \frac{(1-s)(p-2)}{2}} M^{p-2} \|u-v\|_{L^{q}_t\left([0,T]; W^{s,r}_{xy} \right)} \\
                    &\quad \le \ T^{1 - \frac{(1-s)(p-2)}{2}} M^{p-2} \|u-v\|_{\X^s_T}\, ,
                \end{align*}
i.e., for some constant $C_1$,
                \begin{equation}
                    \left\|\mathcal{A}[u]-\mathcal{A}[v]\right\|_{\X^s_T} \le C_1 T^{1 - \frac{(1-s)(p-2)}{2}} M^{p-2} \|u-v\|_{\X^s_T}\, .\label{4.19}
                \end{equation}
Choose $T$ and $M$ satisfying (\ref{4.18}) and
$C_1 T^{1 - \frac{(1-s)(p-2)}{2}} M^{p-2} \le (1/2) ,$
which yields (\ref{4.1}), i.e. $$\left\|\mathcal{A}[u]-\mathcal{A}[v]\right\|_{\X^s_T} \le (1/2) \|u-v\|_{\X^s_T}.$$

Now we can prove the existence and uniqueness.
 Consider  (\ref{2.6}) or (\ref{4.0}) in $\X^s_T$. By (\ref{4.1}) and (\ref{4.2}), we know that
$\mathcal{A}$ is a contraction in $ B_M^{\X^s}$, which yields a unique fixed point $u\in B_M^{\X^s}$ for $\mathcal{A}$
using contraction mapping theorem. Thus,  (\ref{2.6}) has a unique solution in $\X^s_T$.

For the critical case with $p = \frac{4-2s}{1-s}$, we  let $r =\frac{2p}{p-2}$ and $q=p$. Note that in (\ref{4.17}), we need that $$
                    \left\|\mathcal{A}[u]\right\|_{\X^s_T} \le C_0 \left(\mu +  M^{p-1} \right) \, .
$$
Thus, instead of (\ref{4.17}) and (\ref{4.19}, for $u$, $v \in B_{2 C_0 \mu}^{\X^s} := \left\{u: \, \|u\|_{\X^s_T} \le 2 C_0 \mu \right\}$, we have
\begin{equation} 
                    \left\|\mathcal{A}[u]\right\|_{\X^s_T} \le {C_0} \left(\mu + C_1\mu^{p-1} \right) \label{4.20}
\end{equation}
and
\begin{equation}
                    \left\|\mathcal{A}[u]-\mathcal{A}[v]\right\|_{\X^0_T} \le \widetilde{C_1} \mu^{p-2} \|u-v\|_{\X^0_T}\, .\label{4.21}
\end{equation}
As long as  $T > 0 $  finite, we can choose $\mu$ small enough so that  ${C_0} \left(\mu + C_1\mu^{p-1} \right) \le 2C_0\mu$ and $\widetilde{C_1} \mu^{p-2} \le (1/2)$. Hence, (\ref{4.1}) and (\ref{4.2}) also hold for this case. This leads to a conclusion that there is a fixed point of (\ref{2.6}) in $B_{2 C_0\mu}^{\X^s}$. By contraction mapping theorem, the existence and uniqueness of this problem is also guaranteed. Here, we note that the initial and boundary conditions are small. However, for the initial and boundary conditions not small, the arguments in Sections 4.5 and 4.7 of \cite{cazenave} can be carried out similarly and are omitted here.

For $ s = 1$, pick $u$ in $B_M^{\X^1}$. Let $(q,r)$ be any admissible pair  with $r > 2$.
\begin{align*}
\left\|\nabla f(u)(t)\right\|_{L^2} &\lesssim \left\|f'(u)(t)\right\|_{L^{\frac{2r}{r-2}}} \|\nabla u(t)\|_{L^r} \lesssim \left\||u(t)|^{p-2}\right\|_{L^{\frac{2r}{r-2}}} \|\nabla u\|_{L^r}\\
 & = \|u(t)\|_{L^{\frac{2r(p-2)}{r-2}}}^{p-2} \|\nabla u(t)\|_{L^r} \lesssim \|u(t)\|_{H^1}^{p-2} \|\nabla u(t)\|_{L^r}
                \end{align*}
by the Sobolev embedding theorem (here, note that we can let $ p > 2$ by choosing $r$ near 2). Thus,
                \begin{equation*}
                    \left\|f(u)(t)\right\|_{H^1} \lesssim  \|u(t)\|_{H^1}^{p-2} \|u(t)\|_{W^{1,r}}\, .
                \end{equation*}
Consequently, adding the norm and applying the Holder's inequality with respect to $t$ show that
                \begin{align}
                      \left\|f(u)\right\|&_{L^1_t  \left([0,T]; H^1_{xy}(\R \times \R^+)\right)} \lesssim \|u\|_{L^{\frac{q(p-2)}{q-1}}_t \left([0,T]; H^1_{xy}(\R \times \R^+)\right)}^{p-2} \cdot \|u\|_{L^q_t \left([0,T]; W^{1,r}_{xy}(\R \times \R^+)\right)} \nonumber \\
                    &\lesssim T^{\frac{q-1}{q}} \|u\|_{L^{\infty}_t \left([0,T]; H^1_{xy}(\R \times \R^+)\right)}^{p-2} \|u\|_{L^q_t \left([0,T]; W^{1,r}_{xy}(\R \times \R^+)\right)}\, .\label{4.22}
                \end{align}
Moreover, Sobolev embedding theorem gives
                \begin{align*}
                \Big \|\nabla & \big [|u|^{p-2}u(t) - |v|^{p-2}v(t)\big ] \Big \|_{L^2} \\
                &= \left\|\int_0^1 \nabla \left[\dfrac{p}{2} |w(t)|^{p-2} + \left(\dfrac{p}{2}-1\right) |w(t)|^{p-4} w^2(t)\right] \cdot (u(t)-v(t)) \, d\theta \right\|_{L^2} \\
                &\le \sup_{\theta\in[0,1]} \left\|\nabla \left[\left(\dfrac{p}{2} |w(t)|^{p-2} + \left(\dfrac{p}{2}-1\right) |w(t)|^{p-4} w^2(t)\right) \cdot (u(t)-v(t))\right] \right\|_{L^2} \\
                &\le \sup_{\theta\in[0,1]} \left\|\nabla \left[\dfrac{p}{2} |w(t)|^{p-2} + \left(\dfrac{p}{2}-1\right) |w(t)|^{p-4} w^2(t)\right] \right\|_{L^{r_1}} \cdot \|u(t)-v(t)\|_{L^{\frac{2r_1}{r_1-2}}} \\
                &\qquad  + \sup_{\theta\in[0,1]} \left\|\dfrac{p}{2} |w(t)|^{p-2} + \left(\dfrac{p}{2}-1\right) |w(t)|^{p-4} w^2(t) \right\|_{L^{\frac{2r}{r-2}}} \cdot \| (u(t)-v(t))\|_{W^{1,r}}
                \end{align*}
where $r_1 > 2 $ is to be chosen later.   Notice that
                \begin{align*}
                 \Big \|\nabla \Big [\frac{p}{2} & |w(t)|^{p-2} + \left(\frac{p}{2}-1\right) |w(t)|^{p-4} w^2(t)\Big ] \Big \|_{L^{r_1}} \\
                &\le \left|\dfrac{p}{2}\right| \cdot \left\|\nabla |w(t)|^{p-2} \right\|_{L^{r_1}} + \left|\dfrac{p}{2}-1\right| \cdot \left\|\nabla |w(t)|^{p-4} w^2(t) \right\|_{L^{r_1}} \\
                &\lesssim \left\||w(t)|^{p-4} |w(t) + \overline{w}(t)| \cdot |\nabla w(t)| \right\|_{L^{r_1}} + \left\||w(t)|^{p-3} \cdot |\nabla w(t)| \right\|_{L^{r_1}} \, .
                \end{align*}
Hence
                \begin{align*}
                 \Big \|\nabla & \big [|u(t)|^{p-2}u(t) - |v(t)|^{p-2}v(t)\big ] \Big \|_{L^2} \\
                &\lesssim \sup_{\theta\in[0,1]} \left\||w(t)|^{p-3} \cdot |\nabla w(t)| \right\|_{L^{r_1}} \cdot \|u(t)-v(t)\|_{L^{\frac{2r_1}{r_1-2}}} + \sup_{\theta\in[0,1]} \|w(t)\|_{L^{\frac{2r(p-2)}{r-2}}}^{p-2} \cdot \| (u(t)-v(t))\|_{W^{1,r}}\, .
                \end{align*}
If $p=3$, let $r_1=r$, which gives
                \begin{align*}
                \Big \|\nabla & \big [|u(t)|^{p-2}u(t) - |v(t)|^{p-2}v(t)\big ] \Big \|_{L^2} \\
                &\lesssim \sup_{\theta\in[0,1]} \| w(t)\|_{W^{1,r}} \cdot \|u(t)-v(t)\|_{W^{1,r}} + \sup_{\theta\in[0,1]} \|\nabla w(t)\|_{L^r} \cdot \|\nabla (u(t)-v(t))\|_{L^r} \\
                &\le \left(\|\nabla u(t)\|_{L^r} + \|\nabla v(t)\|_{L^r}\right) \cdot \| u(t)-v(t)\|_{W^{1,r}}\, .
                \end{align*}
If $p>3$, let $2<r_1<r$ and obtain
                \begin{align*}
               \Big \|\nabla & \big [|u(t)|^{p-2}u(t) - |v(t)|^{p-2}v(t)\big ] \Big \|_{L^2} \\
                &\lesssim \sup_{\theta\in[0,1]} \left\||w(t)|^{p-3} \cdot |\nabla w(t)| \right\|_{L^{r_1}} \cdot \|u(t)-v(t)\|_{L^{\frac{2r_1}{r_1-2}}} + \sup_{\theta\in[0,1]} \|w(t)\|_{L^{\frac{2r(p-2)}{r-2}}}^{p-2} \cdot \| u(t)-v(t)\|_{W^{1,r}} \\
                &\le \sup_{\theta\in[0,1]} \|w(t)\|_{L^{\frac{rr_1(p-3)}{r-r_1}}}^{p-3} \cdot \|\nabla w(t)\|_{L^r} \cdot \|u(t)-v(t)\|_{L^{\frac{2r_1}{r_1-2}}} + \sup_{\theta\in[0,1]} \|w(t)\|_{L^{\frac{2r(p-2)}{r-2}}}^{p-2} \cdot \| u(t)-v(t)\|_{W^{1,r}} \\
                &\le \left(\| u(t)\|_{W^{1,r}}^{p-2} + \| v(t)\|_{W^{1,r}}^{p-2}\right) \cdot \|u(t)-v(t)\|_{W^{1,2}}  + \left(\|u(t)\|_{W^{1,2}}^{p-2} + \| v(t)\|_{W^{1,2}}^{p-2}\right) \cdot \|u(t)-v(t)\|_{W^{1,r}}\, .
                \end{align*}
Therefore,
                \begin{eqnarray*}
                \left\||u(t)|^{p-2}u(t) - |v(t)|^{p-2}v(t)\right\|_{H^1} &\lesssim& \left(\|u(t)\|_{W^{1,r}}^{p-2} + \| v(t)\|_{W^{1,r}}^{p-2}\right) \cdot \|u(t)-v(t)\|_{H^1} \\
                && + \left(\|u(t)\|_{H^1}^{p-2} + \| v(t)\|_{H^1}^{p-2}\right) \cdot \|u(t)-v(t)\|_{W^{1,r}}
                \end{eqnarray*}
Integrating with respect to $t$ implies
                \begin{align}
                     \left\|f(u)-f(v)\right\|&_{L^1_t\left([0,T]; H^1_{xy}(\R \times \R^+)\right)}\nonumber \\
                    &= T^{\frac{q-1}{q}} \left(\|u\|_{L^q_t\left([0,T]; W^{1,r}_{xy}(\R \times \R^+) \right)}^{p-2} + \|v\|_{L^{q}_t\left([0,T]; W^{1,r}_{xy}(\R \times \R^+) \right)}^{p-2} \right) \|u-v\|_{L^{\infty}_t\left([0,T]; H^1_{xy}(\R \times \R^+)\right)}\nonumber \\
                    &+ T^{\frac{q-1}{q}} \left(\|u\|_{L^{\infty}_t\left([0,T]; H^1_{xy}(\R \times \R^+) \right)}^{p-2} + \|v\|_{L^{\infty}_t\left([0,T]; H^1_{xy}(\R \times \R^+) \right)}^{p-2} \right) \|u-v\|_{L^q_t\left([0,T]; L^r_{xy}(\R \times \R^+)\right)} \, .\label{4.23}
                \end{align}
Therefore, by (\ref{4.22}),
                \begin{align}
                   \left\|\mathcal{A}[u]\right\|_{\X^1_T}
                    &\le \|W_b [h] \|_{\X^1_T} + \left\|W_{\R^2}\phi \right\|_{\X^1_T} + \left\|\Phi_f \right\|_{\X^1_T} + \left\|W_b \left[(W_{\R^2}\phi)_{y=0}\right] \right\|_{\X^1_T} + \left\|W_b \left[(\Phi_f)_{y=0}\right] \right\|_{\X^1_T} \nonumber \\
                    &\lesssim \|h\|_{{\cal H}^s (0,T)} + \left\|\phi \right\|_{H^s_{xy}} + \left\|f \right\|_{L^1_t \left([0,T]; H^s_{xy} \right)} \nonumber \\
                    &\le \mu + \left\|f \right\|_{L^1_t \left([0,T]; H^s_{xy} \right)}\
                        {\lesssim} \ \mu + T^{\frac{q-1}{q}} \|u\|_{L^{\infty}_t \left([0,T]; H^1_{xy}(\R \times \R^+)\right)}^{p-2} \|u\|_{L^q_t \left([0,T]; W^{1,r}_{xy}(\R \times \R^+)\right)} \nonumber \\
                    &\le C_0 \left ( \mu + T^{\frac{q-1}{q}} \|u\|_{\X^1_T}^{p-1}\right )\, \label{4.24}
                \end{align}
for some $C_0>0$. Furthermore, by (\ref{4.23}),
                \begin{align*}
                    & \left\|\mathcal{A}[u]-\mathcal{A}[v]\right\|_{\X^1_T} \le \left\|\Phi_f(u)-\Phi_f(v) \right\|_{\X^1_T} + \left\|W_b \left[(\Phi_f(u)-\Phi_f(v))_{y=0}\right] \right\|_{\X^1_T} \\
                    &\qquad \lesssim  \left\|f(u)-f(v) \right\|_{L^1_t \left([0,T]; L^2_{xy} \right)}  \le C_1 T^{\frac{q-1}{q}} M^{p-2} \|u-v\|_{\X^1_T}\, .
                \end{align*}
In order to apply the contraction mapping theorem, we need
                \begin{equation}
                    C_1 T^{\frac{q-1}{q}} M^{p-2} \le 1/2\, ,\label{4.25}
                \end{equation}
and
                \begin{equation}
                    0 < T \le \left(\frac{M-C_0\mu}{C_0 M^{p-1}}\right)^{\frac{q}{q-1}}\, .\label{4.26}
                \end{equation}
Therefore, by above estimates and the contraction mapping theorem, we show that there is a unique solution to (\ref{2.6}) in $\X^1_T$.

The case for $ s > 1$ has been studied for the pure initial value problem (\ref{1.2}), whose well-posedness was
obtained in $\mathcal{Z}^s$ with $p\ge s+1$ if $s\in\Z$ or $p\ge[s]+2$ if $s\notin\Z$ only when $p$ is not an even integer (see Section 4.10 in \cite{cazenave}, where only integer derivatives are considered, which can be easily extended to fractional derivatives using chain rules and product rules for fractional derivatives). For the IBVP (\ref{2.6}), the only different part is the estimates for the boundary integral operator. However, for $ s > 1$, Propositions \ref{prop3.2}, \ref{prop3.4} and \ref{prop3.5} with
$q = \infty, r =2 $ imply that
                \begin{align*}
                    &\|W_b [h]\|_{\mathcal{Z}^s_T} + \left\|W_b \left[(W_{\R^2}\phi)_{y=0}\right] \right\|_{\mathcal{Z}^s_T} + \left\|W_b \left[(\Phi_f)_{y=0}\right] \right\|_{\mathcal{Z}^s_T} \\
                    &\lesssim \|h\|_{{\cal H}^s (0,T)} + \left\|\phi \right\|_{H^s_{xy}} + \left\|f \right\|_{L^1_t \left([0,T]; H^s_{xy} \right)} \, ,
                \end{align*}
and
                \begin{align*}
                    & \left\|W_b \left[(\Phi_f(u)-\Phi_f(v))\big |_{y=0}\right] \right\|_{\mathcal{Z}^0_T} \lesssim \left\|f(u)-f(v) \right\|_{L^1_t \left([0,T]; L^2_{xy} \right)} \, .
                \end{align*}
Hence, the proof in Section 4.10 of \cite{cazenave} can be applied and we can obtain that for $p\ge s+1$ if $s\in\Z$ or $p\ge[s]+2$ if $s\notin\Z$, (\ref{4.3}) and (\ref{4.4}) holds. Thus, the contraction mapping principle yields a fixed point $u \in \mathcal{Z}^0_T$, which then gives
a unique solution of (\ref{2.6}) in $\mathcal{Z}^s_T$ for $s>1$ since $\mathcal{Z}^s_T$ is a reflexive Banach space.

Here, for $p$ an even integer, $f\in C^{\infty}(\C)$, i.e., there is a unique solution $u \in C_t \left([0,T]; \, H^s_{xy}(\R \times \R^+)\right)$ for any $s>1$ if the initial and boundary conditions are in appropriate spaces discussed above. However, for general $s > 1$ we can only allow $s\le p-1$ if $s\in\Z$ or $[s] \le p-2$ if $s\notin\Z$ so that the existence and uniqueness hold for $u \in C_t \left([0,T]; \, H^s_{xy}(\R \times \R^+)\right)$.
Finally, Definition \ref{defi1.2} guarantees the existence and uniqueness for problem (\ref{2.1}).
    \end{proof}

Now, we discuss the maximum existence interval of solutions found in Theorem \ref{theo4.3}.
\begin{prop}\label{prop4.4}
        Assume that a unique solution $u$ to the problem (\ref{2.1}) exists in $(a)$ $\X^s_T$ if $0 \le s < 1$ and $3 \le p < \frac{4-2s}{1-s}$, or $s=1$ and $3 \le p < \infty$, or $(b)$ $\mathcal{Z}^s_T$ if $p\ge s+1$ for $s\in\Z$ or $p\ge[s]+2$ for $s\notin\Z$ or when $p$ is an even integer,  for $t \in [ 0, T] $ with $\varphi \in H^s (\R \times \R^+)$ and $h \in {\cal H}^s (0,T_0)$ with any $T_0 > 0$. Let $T_{\max}= \sup T$ and suppose $T_{\max}<\infty$. Also, define $u^{\ast}$ on $[0,T_{\max})$ as the solution of (\ref{2.1}) in $\X^s_{T_{\max}}$, with $u^{\ast}(t) = u(t)$ whose existence and uniqueness have been proved in \emph{Proposition~\ref{theo4.3}}. Then $\displaystyle \lim_{t\uparrow T_{\max}} \|u(t)\|_{H^s(\R \times \R^+)} = \infty$.
    \end{prop}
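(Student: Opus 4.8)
The plan is to establish this blow-up alternative by the standard contradiction argument, the only genuinely new ingredient being the bookkeeping needed to restart the boundary value problem~\eqref{2.1} at an interior time. Suppose $T_{\max}<\infty$ but the assertion fails; then there exist a finite constant $M_0$ and a sequence $t_n\uparrow T_{\max}$ with $\|u(t_n)\|_{H^s(\R\times\R^+)}\le M_0$ for every $n$. The idea is to use each $u(t_n)$ as a new initial datum and invoke Theorem~\ref{theo4.3} to continue the solution beyond $T_{\max}$.

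The first key point is that the existence time $T^{*}$ delivered by Theorem~\ref{theo4.3} in cases (a) (with $3\le p<(4-2s)/(1-s)$, hence subcritical) and (b) depends on the data only through the bound $\mu$ on $\|\varphi\|_{H^s}+\|h\|_{{\cal H}^s}$, together with the fixed structural constants $p,s$ and the constants from Propositions~\ref{prop3.2}, \ref{prop3.4} and~\ref{prop3.5}; this is transparent from the selections of $M$ and $T$ in~\eqref{4.18} and in~\eqref{4.25}--\eqref{4.26}. The second point is that restarting at $t_n$ is legitimate: by the $t$-translation invariance of the equation, $u(\cdot,\cdot,t_n+\cdot)$ solves~\eqref{2.1} on $[0,\delta]$ with initial datum $u(\cdot,\cdot,t_n)$ and boundary datum $h_n(x,\cdot):=h(x,t_n+\cdot)$. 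Since ${\cal H}^s(0,T)$ is the restriction of ${\cal H}^s(\R^2)$ to $\R\times(0,T)$ and is translation invariant, $\|h_n\|_{{\cal H}^s(0,T_0-t_n)}\le C\,\|h\|_{{\cal H}^s(0,T_0)}$ with $C$ independent of $n$, and the compatibility conditions required by Theorem~\ref{theo4.3} are inherited automatically because $u(x,0,t_n)=h(x,t_n)=h_n(x,0)$ and the traces of $u(t_n)$ match those encoded in $W_b$. Hence $\|u(t_n)\|_{H^s}+\|h_n\|_{{\cal H}^s}\le\mu_0:=M_0+C\|h\|_{{\cal H}^s(0,T_0)}$ for all $n$, so Theorem~\ref{theo4.3} produces one and the same $\delta=\delta(\mu_0,p,s)>0$ and a unique solution of~\eqref{2.1} on $[t_n,t_n+\delta]$ (in $\X^s$ in case (a), in $\mathcal{Z}^s$ in case (b)) coinciding with $u$ at $t=t_n$.

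Next I would fix $n$ so large that $T_{\max}-t_n<\delta$. On the overlap $[t_n,T_{\max})$ the restarted solution and $u^{*}$ both solve~\eqref{2.1}, so by the uniqueness part of Theorem~\ref{theo4.3} they agree there; concatenating them yields a solution of~\eqref{2.1} on $[0,t_n+\delta]$, an interval strictly larger than $[0,T_{\max})$, which contradicts $T_{\max}=\sup T$. The argument is identical in case (b) with $\X^s$ replaced by $\mathcal{Z}^s$. Therefore $\displaystyle\lim_{t\uparrow T_{\max}}\|u(t)\|_{H^s(\R\times\R^+)}=\infty$, as claimed.

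The step I expect to require the most care is the gluing within the integral formulation~\eqref{4.0}: one must check that concatenating $u$ on $[0,t_n]$ with the restarted piece on $[t_n,t_n+\delta]$ again satisfies~\eqref{4.0} on the whole interval, which amounts to a Duhamel/semigroup composition identity for $W_{\R^2}$ and for $W_b$ of the type already exploited in~\eqref{3.40}. Equivalently, one can bypass this by working with the mild-solution formulation of Definition~\ref{defi1.2} together with its uniqueness. I anticipate that, beyond this routine bookkeeping and the verification that the translated boundary datum $h_n$ stays in ${\cal H}^s$ uniformly in $n$, no new analytic estimate is needed.
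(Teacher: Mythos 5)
Your argument is exactly the ``classical extension procedure'' that the paper invokes (and omits): contradiction via a bounded sequence $t_n\uparrow T_{\max}$, restarting at $t_n$ with the translated boundary datum, and using that the local existence time in Theorem~\ref{theo4.3} depends only on the size of the data in the subcritical/regular cases, which is also why the paper excludes the critical exponent. The proposal is correct and matches the paper's intended proof, including its remark that $h$ must be defined for all large times so that the translated data $h(x,t_n+\cdot)$ remain admissible.
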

\begin{proof}
The proof can be easily obtained using classical extension procedure and is omitted. Here, we note that $h$ has to be defined for any large time $t > 0$. Moreover,
we are not able to construct the blow-up statement with the critical case $p = \frac{4-2s}{1-s}$ with $0 \leq s < 1$ because the existence and uniqueness results are only for small initial and boundary data.
\end{proof}

The following continuous dependence of solutions on the initial and boundary data can be proved as well.
\begin{prop}\label{prop4.5} 
Let $0 \le s < p-1$ for $s\in\Z$ or $0 \le [s] \le p-2$ for $s\notin\Z$ ($ 0 \leq s < \infty$ for $p$ even) and $\{\varphi_n\}$ be a sequence of functions in $H^s(\R \times \R^+)$ and $\varphi \in H^s(\R \times \R^+)$ so that $\varphi_n \to \varphi$ as $n \to \infty$ in $H^s(\R \times \R^+)$. Let $ h $ be a function and $\{h_n\}$ be a sequence of functions
such that
$$
h , h_n \in {\cal H}^s (0,T) \quad\mbox{and}\quad  h_n \to h \quad \mbox {as\ } n \to \infty
$$
in ${\cal H}^s (0,T)$ for any $T>0$. Let $u_n$ be the solutions to the equation (\ref{2.6}) with $u_n(x,y,0)=\varphi_n(x,y)$ and  $u_n(x,0,t)=h_n(x,t)$ and $u$ be the solution with $u(x,y,0)=\varphi(x,y)$ and $u(x,0,t)=h(x,t)$, respectively. Then $u_n \to u$ as $n \to \infty$ in $X_T$ with $\left\|u_n\right\|_{X_T}\le M$ where $X_T=\X^s_T$ for $0 \leq s < 1$ and $3 \leq p < \frac{4-2s}{1-s}$ or $\X^1_T$ for $s =1$ or $\mathcal{Z}^s_T$ for $s > 1$, respectively.
\end{prop}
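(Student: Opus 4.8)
The plan is to re-run the contraction-mapping scheme of Theorem~\ref{theo4.3}, this time applied to the difference $u_n-u$, exploiting that the solution map (\ref{4.0}) is a genuine \emph{contraction} on the relevant ball. As a preliminary step I would record a uniform bound: since $\varphi_n\to\varphi$ in $H^s(\R\times\R^+)$ and $h_n\to h$ in ${\cal H}^s(0,T)$, the quantities $\|\varphi_n\|_{H^s(\R\times\R^+)}+\|h_n\|_{{\cal H}^s(0,T)}$ are bounded by a single $\mu$. Feeding this $\mu$ into Theorem~\ref{theo4.3} yields one $T^*>0$ and one $M>0$ (with the smallness proviso active only in the critical case $p=(4-2s)/(1-s)$) such that every $u_n$, as well as $u$, lies in $B_M^{\X^s}\subset\X^s_{T^*}$, respectively $B_M^{\mathcal{Z}^s}\subset\mathcal{Z}^s_{T^*}$ when $s>1$; this already gives $\|u_n\|_{X_T}\le M$. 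It then suffices to prove $u_n\to u$ on $[0,T^*]$, since the passage to an arbitrary common interval of existence is the usual finite-iteration continuation argument of Proposition~\ref{prop4.4}. Relabel $T^*$ as $T$.

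For $0\le s\le1$, denote by $\mathcal{A}_n$ the operator (\ref{4.0}) built from $(\varphi_n,h_n)$ and split
$$
u_n-u=\bigl(\mathcal{A}_n[u_n]-\mathcal{A}_n[u]\bigr)+\bigl(\mathcal{A}_n[u]-\mathcal{A}[u]\bigr).
$$
Since $u_n,u\in B_M^{\X^s}$ and the nonlinearity $f$ entering $\mathcal{A}_n$ and $\mathcal{A}$ is the same, the first bracket depends on $u_n,u$ only through $f(u_n)-f(u)$ (the $h_n$- and $\phi_n$-terms cancel), so it is estimated exactly as in (\ref{4.19}) (or (\ref{4.3}) when $s=1$): after shrinking $T$ as in Theorem~\ref{theo4.3} one gets $\|\mathcal{A}_n[u_n]-\mathcal{A}_n[u]\|_{X_T}\le\tfrac12\|u_n-u\|_{X_T}$. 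In the second bracket the nonlinear terms cancel identically and only the linear data survive,
$$
\mathcal{A}_n[u]-\mathcal{A}[u]=W_{\R^2}(t)(\phi_n-\phi)+W_b\!\left[(h_n-h)-\bigl(W_{\R^2}(\cdot)(\phi_n-\phi)\bigr)\big|_{y=0}\right],
$$
where, the extension operator being linear and bounded, $\phi_n-\phi$ is an extension of $\varphi_n-\varphi$ with comparable norm. Propositions~\ref{prop3.2} and \ref{prop3.4} then give $\|\mathcal{A}_n[u]-\mathcal{A}[u]\|_{X_T}\lesssim\|\varphi_n-\varphi\|_{H^s(\R\times\R^+)}+\|h_n-h\|_{{\cal H}^s(0,T)}$. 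Absorbing $\tfrac12\|u_n-u\|_{X_T}$ on the left yields $\|u_n-u\|_{X_T}\lesssim\|\varphi_n-\varphi\|_{H^s(\R\times\R^+)}+\|h_n-h\|_{{\cal H}^s(0,T)}\to0$.

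The case $s>1$ is where I expect the real work to lie, because Theorem~\ref{theo4.3}(c) only supplies a contraction in the $\mathcal{Z}^0_T$-metric, not in $\mathcal{Z}^s_T$. Running the previous paragraph's split with the contraction estimate (\ref{4.5}) first gives $u_n\to u$ in $\mathcal{Z}^0_T=C_t([0,T];L^2_{xy})$. Interpolating this against the uniform bound $\|u_n-u\|_{\mathcal{Z}^s_T}\le 2M$ gives $u_n\to u$ in $\mathcal{Z}^{s'}_T$ for every $s'\in[0,s)$; choosing $1<s'<s$ and using the embedding $H^{s'}\hookrightarrow L^\infty$ (valid for $s'>1$ in the plane) produces $\|u_n-u\|_{C_t([0,T];L^\infty_{xy})}\to0$. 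Finally I would revisit the $H^s$-level estimate for $f(u_n)-f(u)$: by the chain and product rules for (fractional) derivatives used in the proof of Theorem~\ref{theo4.3}(c), $\|f(u_n)-f(u)\|_{L^1_t([0,T];H^s_{xy})}$ splits into a finite sum of terms, each being either a uniformly bounded quantity times $\|u_n-u\|_{C_t L^\infty_{xy}}$, which tends to $0$, or a term $C\,T^{\theta}M^{p-2}\|u_n-u\|_{\mathcal{Z}^s_T}$ with $\theta>0$, which is absorbed after a further shrinking of $T$. Inserting this back into (\ref{4.0}) via Propositions~\ref{prop3.2}, \ref{prop3.4} and \ref{prop3.5} gives $\|u_n-u\|_{\mathcal{Z}^s_T}\to0$.

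The only remaining subtlety is the critical exponent $p=(4-2s)/(1-s)$ with $0\le s<1$, where Theorem~\ref{theo4.3} is conditional on smallness of the data. Since $\varphi_n\to\varphi$ and $h_n\to h$, for all large $n$ the data $(\varphi_n,h_n)$ lie in a ball barely larger than the original small one, so all $u_n$ and $u$ can be placed in a common small ball on a fixed interval and the absorption argument above runs unchanged with the constants of (\ref{4.20})--(\ref{4.21}).
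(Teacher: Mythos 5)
Your argument for $0\le s\le 1$ is essentially the paper's own proof: both rest on the contraction estimate from the proof of (\ref{4.1}) (respectively (\ref{4.3})) applied to $u_n-u$, the linear estimates of Propositions \ref{prop3.2} and \ref{prop3.4} for the data differences $\varphi_n-\varphi$ and $h_n-h$, absorption of the factor $\tfrac12\|u_n-u\|_{\X^s_T}$ after shrinking $T$, and iteration over time steps $T,2T,\dots$ up to the common existence interval guaranteed by Theorem \ref{theo4.3} and Proposition \ref{prop4.4}; writing the two contributions as separate brackets $\mathcal{A}_n[u_n]-\mathcal{A}_n[u]$ and $\mathcal{A}_n[u]-\mathcal{A}[u]$ is only a cosmetic repackaging of the paper's single inequality. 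The genuine divergence is the case $s>1$: the paper disposes of it in one sentence by citing the continuous-dependence argument of Section 4.10 of Cazenave's book (extended to the IBVP), whereas you correctly observe that (\ref{4.5}) only contracts in the $\mathcal{Z}^0_T$ metric and supply a self-contained upgrade — convergence in $\mathcal{Z}^0_T$, interpolation against the uniform $\mathcal{Z}^s_T$ bound to get $\mathcal{Z}^{s'}_T$ convergence, the embedding $H^{s'}\hookrightarrow L^\infty$ for $s'>1$, and then a chain/product-rule decomposition of $f(u_n)-f(u)$ in $L^1_t H^s$ into terms that either vanish with $\|u_n-u\|_{C_tL^\infty_{xy}}$ (or with intermediate $\mathcal{Z}^{s'}_T$ norms) or are absorbed as $T^{\theta}M^{p-2}\|u_n-u\|_{\mathcal{Z}^s_T}$. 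That is the standard mechanism behind the cited result and buys a proof that does not lean on an external reference, at the cost of some bookkeeping you only sketch. One small remark: your closing paragraph on the critical exponent $p=(4-2s)/(1-s)$ is superfluous, since the proposition is stated only for $3\le p<(4-2s)/(1-s)$ when $0\le s<1$, exactly as in the paper.
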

\begin{proof}
First, we consider $0 \le s < 1$ and $3 \le p < \frac{4-2s}{1-s}$. \emph{Proposition~\ref{theo4.3}} and \emph{Proposition~\ref{prop4.4}} guarantee the existence of a common existence interval $[0, T_c]$ for $u_n$ and $u$  because of the choice of $T_{\max}$ only dependent upon the bounds of initial and boundary data in their respective spaces. Furthermore, from the proof of (\ref{4.1}), we can obtain
            \begin{align*}
                & \left\|u-u_n\right\|_{\X^s_T} = \left\|\mathcal{A}[u]-\mathcal{A}[u_n]\right\|_{\X^s_T} \le C \bigg ( \left\|\varphi-\varphi_n\right\|_{H^s(\R \times \R^+)} \\
                & \qquad + \left\|h-h_n\right\|_{{\cal H}^s (0,T)}  + T^{1 - \frac{(1-s)(p-2)}{2}} M^{p-2} \|u-u_n\|_{\X^s_T}\bigg ) \, .
            \end{align*}
Let $T$ be sufficiently small so that $T^{1 - \frac{(1-s)(p-2)}{2}} M^{p-2} <1/2$. Then
            \begin{equation*}
                \left \|u-u_n\right\|_{\X^s_T} \le 2 C \bigg ( \left\|\varphi-\varphi_n\right\|_{H^s(\R \times \R^+)} +
                 \left\|h-h_n\right\|_{{\cal H}^s (0,T)}  \bigg ) \, .
            \end{equation*}
Since $T$ only depends on the uniform bounds for $u, u_n, \varphi, \varphi_n , h, h_n$ in their respective norms for $t \in [ 0, T_c]$, the above inequality holds for $T, 2T, \dots$ until reaching $T_c$. The continuous dependence is proved for $0 \leq s < 1$.

The case for $s =1$ can be obtained similarly. For $s > 1$, the proof of continuous dependence in $\mathcal{Z}^s_T$ has been given for the purely IVP (\ref{1.2}) (see Section 4.10 in \cite{cazenave}), which can be easily extended to the IBVP (\ref{1.1}).  Thus, the proof of the continuous dependence is completed.
\end{proof}

Hence, Theorem \ref{theo1.3} is proved. Using Theorem \ref{theo1.3}, the proof of Theorem \ref{theo1.4} follows
directly from the argument given in \cite{Sun_CUC_Evo}. Here, we note that the regularity theorems discussed in Sections 5.1-5.4 of \cite{cazenave}
hold for the IBVP (\ref{1.1}), i.e., the persistence of regularity defined in \cite{Sun_CUC_Evo} is valid and unconditional
well-posedness theorems in \cite{Sun_CUC_Evo} can be applied for (\ref{1.1}) (also see Section 4 of \cite{Sun_CUC_Evo}).

\section{Global Well-posedness}\label{sect5}

    In this section, we will discuss the global existence of solutions for (\ref{1.1}) with any $T \in (0,\infty]$.
To prove the global existence, we first derive several identities.
\begin{lemma} \label{lemm5.1}
        If the solution $u$ of (\ref{1.1}) exists for any $ t > 0 $ and is smooth enough, then $u$ satisfies the following identities.
\begin{align}
& (|u|^2)_t = -2 \text{Im} \left[(u_x \overline{u})_x + (u_y \overline{u})_y\right] \, ,\label{5.1}\\
&\left(|u_x|^2+|u_y|^2-\dfrac{2\lambda}{p}|u|^p\right)_t = 2 \text{Re} \left[(\overline{u_t} u_x)_x + (\overline{u_t} u_y)_y\right] \, ,\label{5.2}\\
& \left(|u_y|^2-|u_x|^2+\dfrac{2\lambda}{p}|u|^p\right)_y = -2 \text{Re}(\overline{u}_x u_y)_x - i(u \overline{u_y})_t + i(u \overline{u_t})_y\, . \label{5.3}
\end{align}
\end{lemma}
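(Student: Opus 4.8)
All three identities are pointwise algebraic consequences of the equation in (\ref{1.1}), obtained by multiplying it by a suitably chosen conjugated factor and separating real and imaginary parts. The elementary facts I plan to use repeatedly are: $|u_x|^2$, $|u_y|^2$, $|u|^p$ and $\lambda|u|^{p-2}|u|^2$ are real-valued; $\text{Im}(iz)=\text{Re}\,z$ and $\text{Re}(iz)=-\text{Im}\,z$; the Leibniz identity $\text{Re}(u_{xx}\overline{v})=\text{Re}\big((u_x\overline{v})_x\big)-\text{Re}(u_x\overline{v_x})$ together with its obvious $y$-analogue; and the chain rule $\partial_{\ast}|u|^p=p\,|u|^{p-2}\,\text{Re}(u\,\overline{u_{\ast}})$ for $\ast\in\{t,x,y\}$ (valid pointwise since $|u|^p$ is $C^1$ in $u$ for $p\ge2$). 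All manipulations below are pointwise, hence valid wherever $u$ is of class $C^2$ in $(x,y,t)$, which is what ``smooth enough'' means here.

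For (\ref{5.1}) the plan is to multiply the equation by $\overline{u}$ and take the imaginary part: the nonlinear term drops since $|u|^{p-2}|u|^2$ is real, $\text{Im}(iu_t\overline{u})=\text{Re}(u_t\overline{u})=\tfrac12(|u|^2)_t$, and $\text{Im}(u_{xx}\overline{u})=\text{Im}\big((u_x\overline{u})_x\big)$, $\text{Im}(u_{yy}\overline{u})=\text{Im}\big((u_y\overline{u})_y\big)$ because $|u_x|^2$ and $|u_y|^2$ are real; rearranging gives (\ref{5.1}). For (\ref{5.2}) I would multiply by $\overline{u_t}$ and take the real part: $\text{Re}(i|u_t|^2)=0$, $\text{Re}(u_{xx}\overline{u_t})=\text{Re}\big((u_x\overline{u_t})_x\big)-\tfrac12(|u_x|^2)_t$ and likewise for the $y$-term, and $\text{Re}(\lambda|u|^{p-2}u\,\overline{u_t})=\tfrac{\lambda}{p}(|u|^p)_t$ by the chain rule; collecting terms and multiplying through by $-2$ yields (\ref{5.2}), using $(\overline{u_t}u_x)_x=(u_x\overline{u_t})_x$.

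For (\ref{5.3}), which is the step I expect to need the most care, the plan is to multiply the equation by $\overline{u_y}$ and again take the real part, using $\text{Re}(iu_t\overline{u_y})=-\text{Im}(u_t\overline{u_y})$, $\text{Re}(u_{yy}\overline{u_y})=\tfrac12(|u_y|^2)_y$, $\text{Re}(u_{xx}\overline{u_y})=\text{Re}\big((u_x\overline{u_y})_x\big)-\tfrac12(|u_x|^2)_y$, and $\text{Re}(\lambda|u|^{p-2}u\,\overline{u_y})=\tfrac{\lambda}{p}(|u|^p)_y$; after multiplying by $2$ this produces
\[
\Big(|u_y|^2-|u_x|^2+\tfrac{2\lambda}{p}|u|^p\Big)_y=-2\,\text{Re}(u_x\overline{u_y})_x+2\,\text{Im}(u_t\overline{u_y}),
\]
and the first term on the right already agrees with the corresponding term in (\ref{5.3}) since $\text{Re}(u_x\overline{u_y})=\text{Re}(\overline{u}_x u_y)$.

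The only genuinely nontrivial point left is to rewrite the source term $2\,\text{Im}(u_t\overline{u_y})$ in conservation-law form. For this I would expand $-i(u\overline{u_y})_t+i(u\overline{u_t})_y$: the two mixed second-order terms $u\,\overline{u_{yt}}$ and $u\,\overline{u_{ty}}$ cancel by equality of mixed partials (here the $C^2$ hypothesis enters), leaving $-iu_t\overline{u_y}+iu_y\overline{u_t}=-i\big(u_t\overline{u_y}-\overline{u_t}\,u_y\big)=2\,\text{Im}(u_t\overline{u_y})$. Substituting this identity into the displayed equation gives exactly (\ref{5.3}). Apart from this cancellation the whole argument is bookkeeping; the only place requiring attention is keeping the signs from $\text{Re}(iz)$ versus $\text{Im}(iz)$ straight.
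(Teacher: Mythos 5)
Your proof is correct and takes essentially the same route as the paper: pointwise multiplier identities (multiply the equation by $\overline{u}$, $\overline{u_t}$, $\overline{u_y}$ respectively, separate real/imaginary parts, and use the equation together with equality of mixed partials to put the source terms in divergence form). In fact for (\ref{5.1}) your choice of multiplier $\overline{u}$ is the right one --- the paper's text says ``multiplying by $\overline{u_t}$,'' which is an apparent typo --- and your rewriting of $2\,\mbox{Im}(u_t\overline{u_y})$ as $-i(u\overline{u_y})_t+i(u\overline{u_t})_y$ matches the paper's corresponding step in the proof of (\ref{5.3}).
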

\begin{proof}
The first identity (\ref{5.1}) can be obtained by multiplying both sides of the equation (\ref{2.1}) by $\overline{u_t}$ and then retaining the imaginary parts.
For the proof of (\ref{5.2}), first we know that
            \begin{eqnarray*}
                (|u_x|^2)_t &=& (u_x \overline{u}_x)_t = 2 \text{Re} (u_{xt} \overline{u}_x) = 2 \text{Re} (\overline{u_t} u_x)_x - 2 \text{Re} (\overline{u_t} u_{xx}) \, ,\\
                (|u_y|^2)_t &=& 2 \text{Re} (\overline{u_t} u_y)_y - 2 \text{Re} (\overline{u_t} u_{yy})\, .
            \end{eqnarray*}
Add them together to have
            \begin{eqnarray*}
                (|u_x|^2+|u_y|^2)_t &=& 2 \text{Re} \left[(\overline{u_t} u_x)_x+(\overline{u_t} u_y)_y\right] - 2 \text{Re} (\overline{u_t} \Delta u) \\
                &=& 2 \text{Re} \left[(\overline{u_t} u_x)_x+(\overline{u_t} u_y)_y\right] + 2 \text{Re} \left(i|u_t|^2 + \lambda\overline{u_t} u |u|^{p-2}\right) \\
                &=& 2 \text{Re} \left[(\overline{u_t} u_x)_x+(\overline{u_t} u_y)_y\right] + \dfrac{2\lambda}{p} \cdot \dfrac{p}{2} \cdot \left(|u|^2\right)^{\frac{p}{2}-1}\left(|u|^2\right)_t \\
                &=& 2 \text{Re} \left[(\overline{u_t} u_x)_x+(\overline{u_t} u_y)_y\right] + \left(\dfrac{2\lambda}{p} |u|^p\right)_t\, ,
            \end{eqnarray*}
which gives (\ref{5.2}), where (\ref{2.1}) has been used. The proof of (\ref{5.3}) is as follows.
            \begin{eqnarray*}
                 -\left(|u_y|^2-|u_x|^2\right)_y & =& (u_y \overline{u}_x)_x + (\overline{u_y} u_x)_x - u_y(\overline{u}_{xx}+\overline{u}_{yy}) - \overline{u_y}(u_{xx}+u_{yy}) \\
                &=& 2\text{Re}(u_y \overline{u}_x)_x + u_y(-i\overline{u_t}+\lambda \overline{u}|u|^{p-2}) + \overline{u_y}(i u_t+\lambda u|u|^{p-2}) \\
                &=& 2\text{Re}(u_y \overline{u}_x)_x + i(u_t \overline{u_y} - \overline{u_t} u_y) + \lambda |u|^{p-2} (u_y \overline{u} + \overline{u_y} u) \\
                &=& 2\text{Re}(u_y \overline{u}_x)_x + i\left[(u \overline{u_y})_t - (\overline{u_t} u)_y\right] + \dfrac{2\lambda}{p} \cdot \dfrac{p}{2} \cdot \left(|u|^2\right)^{\frac{p}{2}-1}\left(|u|^2\right)_y \\
                &=& 2\text{Re}(u_y \overline{u}_x)_x + i(u \overline{u_y})_t - i(\overline{u_t} u)_y + \left(\dfrac{2\lambda}{p}|u|^p\right)_y\, .
            \end{eqnarray*}
\end{proof}

Next, we derive an {\it a-priori} estimate of the solution for (\ref{2.1}) in $H^1(\R\times\R^+)$.
\begin{prop}\label{prop5.2} 
        Assume that either $3 \le p < \infty$ if $\lambda<0$ or $3 \le p \le \frac{10}{3}$ if $\lambda>0$. Let $T>0$ be given. If $u$ is a solution of (\ref{2.1}) in $C_t \left([0,T]; \, H^1_{xy}(\R \times \R^+)\right)$, then there exists a nondecreasing function $\phi: \, \R^+ \to \R^+$  such that for $\varphi \in H^1(\R\times\R^+)$ and $h \in H^1(\R\times\R^+)$,
        \begin{equation}
            \sup_{t\in[0,T]} \|u(t)\|_{H^1(\R\times\R^+)} \le \phi\left (\|\varphi\|_{H^1(\R\times\R^+)}+\|h\|_{H^1(\R\times\R^+)}\right )\, . \label{5.4}
        \end{equation}
\end{prop}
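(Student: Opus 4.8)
The plan is to derive (\ref{5.4}) from the three differential identities of Lemma~\ref{lemm5.1} by integrating each over the half-plane $\R\times\R^+$ and carefully tracking the fluxes through the boundary $y=0$. It suffices to argue for solutions $u$ that are smooth and rapidly decaying in $(x,y)$, so that all integrations by parts are legitimate and the boundary contributions at $x=\pm\infty$ and $y=+\infty$ vanish; the general $H^1$ case then follows by approximation, using the continuous dependence and uniqueness of Section~4. Write $\mathcal M(t):=\|u(t)\|_{L^2(\R\times\R^+)}^2$, $\mathcal E(t):=\|\nabla u(t)\|_{L^2(\R\times\R^+)}^2-\frac{2\lambda}{p}\|u(t)\|_{L^p(\R\times\R^+)}^p$, and let $D(T)$ denote any quantity bounded in terms of $\|\varphi\|_{H^1}$, $\|h\|_{H^1}$ and $T$ only. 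Integrating (\ref{5.1}) and (\ref{5.2}) over $\R\times\R^+$, the $x$-divergences disappear and the $y$-divergences leave one boundary integral each:
\[
\frac{d}{dt}\mathcal M(t)=2\,\text{Im}\!\int_{\R}u_y(x,0,t)\,\overline{h(x,t)}\,dx,\qquad
\frac{d}{dt}\mathcal E(t)=2\,\text{Re}\!\int_{\R}\overline{h_t(x,t)}\,u_y(x,0,t)\,dx .
\]
Both right-hand sides contain the trace $u_y(x,0,t)$, which is \emph{not} prescribed by the data; the crux is to control $\|u_y(\cdot,0,t)\|_{L^2_x(\R)}^2$. For that I would use the ``sideways'' identity (\ref{5.3}): integrating it in $x$ (killing the $x$-divergence) and then in $y$ over $(0,\infty)$, the term $i(u\overline{u_t})_y$ produces $-i\int_{\R} h\overline{h_t}\,dx$ at $y=0$, while $-i(u\overline{u_y})_t$ becomes $-i\frac{d}{dt}Q(t)$ with $Q(t):=\text{Im}\int_{\R}\int_0^\infty u\,\overline{u_y}\,dy\,dx$. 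Taking real parts gives an identity of the form $\|u_y(\cdot,0,t)\|_{L^2_x(\R)}^2=\mathcal F(t)-\frac{d}{dt}Q(t)$, where $\mathcal F(t)$ involves only $h$ (through $\|h_x(t)\|_{L^2_x}^2$, $\|h(t)\|_{L^p_x}^p$ and $\text{Im}\int_{\R} h\overline{h_t}\,dx$), so that $\int_0^T|\mathcal F(t)|\,dt\le D(T)$.

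Next I would form the modified functional $\mathcal G(t):=\mathcal M(t)+\mathcal E(t)+Q(t)$. Adding the three identities above and using
$2\,\text{Im}\int u_y\overline{h}+2\,\text{Re}\int\overline{h_t}u_y=2\,\text{Re}\int u_y(\overline{h_t}-i\overline{h})\le\|u_y(\cdot,0,t)\|_{L^2_x}^2+C(\|h(t)\|_{L^2_x}^2+\|h_t(t)\|_{L^2_x}^2)$,
the $\|u_y(\cdot,0,t)\|_{L^2_x}^2$ contributions cancel against the $-\frac{d}{dt}Q$ that is absorbed into $\mathcal G$, leaving $\frac{d}{dt}\mathcal G(t)\le C(\|h(t)\|_{L^2_x}^2+\|h_t(t)\|_{L^2_x}^2+|\mathcal F(t)|)$; hence $\mathcal G(t)\le D(T)$ for all $t\in[0,T]$. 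In the defocusing case $\lambda<0$ this already proves (\ref{5.4}): then $\mathcal E(t)\ge\|\nabla u(t)\|_{L^2}^2$, and by Cauchy--Schwarz $|Q(t)|\le\|u(t)\|_{L^2}\|\nabla u(t)\|_{L^2}\le\tfrac12\|u(t)\|_{H^1}^2$, so $\mathcal G(t)\ge\tfrac12\|u(t)\|_{H^1}^2$ and $\sup_{[0,T]}\|u(t)\|_{H^1}^2\le 2D(T)$ (the function $\phi$ in (\ref{5.4}) is allowed to depend on $T$ as well).

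The substantive case — and where I expect the main difficulty — is the focusing one, $\lambda>0$ and $3\le p\le 10/3$, in which $\mathcal E$ is no longer coercive. Here I would additionally control the mass by itself: combining the mass identity with the trace identity gives $\frac{d}{dt}(\mathcal M(t)+Q(t))\le C(\|h(t)\|_{L^2_x}^2+|\mathcal F(t)|)$, so $\mathcal M(t)+Q(t)\le D(T)$. Using the two-dimensional Gagliardo--Nirenberg inequality $\|u\|_{L^p(\R\times\R^+)}^p\lesssim\|u\|_{L^2}^2\|\nabla u\|_{L^2}^{p-2}$ (valid on the half-space, e.g.\ after even reflection) together with $|Q(t)|\lesssim\|u(t)\|_{L^2}\|\nabla u(t)\|_{L^2}$, the bounds for $\mathcal G(t)$ and for $\mathcal M(t)+Q(t)$ can be reorganized into a single closed inequality for $\|u(t)\|_{H^1}^2$ on $[0,T]$, in which the potential term $\tfrac{2\lambda}{p}\|u(t)\|_{L^p}^p$ and the correction $|Q(t)|$ are absorbed up to powers of $\mathcal M(t)$; a (nonlinear) Gronwall/bootstrap argument then yields (\ref{5.4}). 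The exponent $p=10/3$ is precisely the value at which this nonlinear feedback remains subcritical, so that the resulting differential--integral inequality cannot produce a finite-time singularity — this balance, rather than any of the integration-by-parts steps, is the delicate point. A density argument finally transfers (\ref{5.4}) from smooth solutions to arbitrary $H^1$ data.
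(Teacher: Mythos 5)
Your overall skeleton is the same as the paper's: integrate the three identities of Lemma~\ref{lemm5.1}, use (\ref{5.1}) for the mass, (\ref{5.2}) for the energy, and (\ref{5.3}) to trade the unknown trace $\|u_y(\cdot,0,t)\|_{L^2_x}^2$ for boundary-data terms plus a time derivative of $Q(t)=\mathrm{Im}\int\!\!\int u\,\overline{u_y}$ (in the paper this appears, after integrating in $t$, as the terms $\|u\|_{L^2}\|u_y\|_{L^2}+\|\varphi\|_{L^2}\|\varphi_y\|_{L^2}$ in the trace estimate). Your packaging via the modified functional $\mathcal G=\mathcal M+\mathcal E+Q$ is clean, and your treatment of the defocusing case $\lambda<0$ is correct and arguably tidier than the paper's sequential estimates.

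The focusing case, however, has a genuine gap, and it is exactly the quantitative point on which the restriction $p\le 10/3$ rests. Your mass control is $\mathcal M(t)+Q(t)\le D(T)$, which only gives $\|u(t)\|_{L^2}^2\le D(T)+\|u(t)\|_{L^2}\|\nabla u(t)\|_{L^2}$, i.e.\ $\|u(t)\|_{L^2}\lesssim \sqrt{D(T)}+\|\nabla u(t)\|_{L^2}$ --- \emph{linear} in $\|\nabla u\|_{L^2}$. Feeding this into Gagliardo--Nirenberg, $\|u\|_{L^p}^p\lesssim\|u\|_{L^2}^2\|\nabla u\|_{L^2}^{p-2}$, the potential term scales like $\|\nabla u\|_{L^2}^{p}$ with $p\ge 3>2$, which can never be absorbed into $\|\nabla u\|_{L^2}^2$; under your bounds no exponent $p>2$ is ``subcritical,'' so the closed inequality you invoke does not exist, and the claim that $p=10/3$ is the critical value is not substantiated by your estimates. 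What the paper actually uses is a \emph{sublinear} mass bound, its (\ref{5.6}): integrating the mass flux in time and applying Cauchy--Schwarz in $(x,t)$ gives $\|u(t)\|_{L^2}^2\le\|\varphi\|_{L^2}^2+2\|h\|_{L^2_{xt}}\bigl(\int_0^t\|u_y(\cdot,0,\tau)\|_{L^2_x}^2\,d\tau\bigr)^{1/2}$; the trace integral is $\lesssim\|u\|_{L^2}\|u_y\|_{L^2}+\mathrm{data}$ by (\ref{5.3}), and Young's inequality with exponents $4$ and $4/3$ absorbs the resulting $\|u\|_{L^2}^{1/2}$ into the left side, leaving $\|u(t)\|_{L^2}^2\lesssim\|h\|_{L^2_{xt}}^{4/3}\|u_y(t)\|_{L^2}^{2/3}+\mathrm{data}$. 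It is this exponent $2/3$ that makes the Gagliardo--Nirenberg term $\lesssim\|\nabla u\|^{p-2+2/3}$, absorbable precisely when $p<10/3$. Your pointwise-in-time Young inequality $2\|u_y\|\|h\|\le\|u_y\|^2+\|h\|^2$ spends the full square of the trace and destroys this gain. Finally, the endpoint $p=10/3$ needs an additional argument (the paper partitions $[0,T]$ into subintervals on which $\|h\|_{L^2_{xt}}$ is small and iterates), which your sketch does not address.
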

\begin{proof}
First, we show the $L^2$-norm estimate of the solution over variables $x$ and $y$. By (\ref{5.1}),
\begin{align*}
                 \|u(t)\|_{L^2_{xy}}^2 &= \int_{-\infty}^{\infty} \int_0^{\infty} |u(x,y,t)|^2 \, dy \, dx \\
                &= \int_{-\infty}^{\infty} \int_0^{\infty} |u(x,y,0)|^2 \, dy \, dx + \int_{-\infty}^{\infty} \int_0^{\infty} \int_0^t \left(|u(x,y,\tau)|^2\right)_t \, d\tau \, dy \, dx \\
                &{=} \|\varphi\|_{L^2_{xy}}^2 - 2 \text{Im} \int_{-\infty}^{\infty} \int_0^{\infty} \int_0^t \left[(u_x(x,y,\tau) \overline{u}(x,y,\tau))_x + (u_y(x,y,\tau) \overline{u}(x,y,\tau))_y\right] \, d\tau \, dy \, dx \\
                &= \|\varphi\|_{L^2_{xy}}^2 + \int_{-\infty}^{\infty} \int_0^t 2 \text{Im} \left(u_y(x,0,\tau) \overline{u}(x,0,\tau)\right) \, d\tau \, dx\, ,
            \end{align*}
which gives
            \begin{equation}
                \|u(t)\|_{L^2_{xy}}^2 \le \|\varphi\|_{L^2_{xy}}^2 + 2 \|h\|_{L^2_{xt}} \left(\int_{-\infty}^{\infty} \int_0^t \left|u_y(x,0,\tau)\right|^2 \, d\tau \, dx\right)^{\frac{1}{2}}\, .\label{5.5}
            \end{equation}
Then, we derive the estimate for $u_y(x,0,\tau)$. Integrate (\ref{5.3}) with respect to $x$, $y$ and $t$ to obtain
            \begin{align*}
                \int_{-\infty}^{\infty} & \int_0^t \left|u_y(x,0,\tau)\right|^2 \, d\tau \, dx - \int_{-\infty}^{\infty} \int_0^t \left|u_x(x,0,\tau)\right|^2 \, d\tau \, dx \\
                &= i \int_{-\infty}^{\infty} \int_0^{\infty} \left|u(x,y,t) \overline{u_y}(x,y,t)\right| \, dy \, dx -i \int_{-\infty}^{\infty} \int_0^{\infty} \left|u(x,y,0) \overline{u_y}(x,y,0)\right| \, dy \, dx \\
                & \qquad + i \int_{-\infty}^{\infty} \int_0^t \left|u(x,0,\tau) \overline{u_t}(x,0,\tau)\right| \, d\tau \, dx - \dfrac{2\lambda}{p} \int_{-\infty}^{\infty} \int_0^t \left|u(x,0,\tau)\right|^p \, d\tau \, dx\, ,
            \end{align*}
which implies
            \begin{align*}
            \int_{-\infty}^{\infty} \int_0^t & \left|u_y(x,0,\tau)\right|^2 \, d\tau \, dx = \int_{-\infty}^{\infty} \int_0^t \left|u_x(x,0,\tau)\right|^2 \, d\tau \, dx \\
                &= i \int_{-\infty}^{\infty} \int_0^{\infty} \left|u(x,y,t) \overline{u_y}(x,y,t)\right| \, dy \, dx -i \int_{-\infty}^{\infty} \int_0^{\infty} \left|u(x,y,0) \overline{u_y}(x,y,0)\right| \, dy \, dx \\
                & \qquad + i \int_{-\infty}^{\infty} \int_0^t \left|u(x,0,\tau) \overline{u_t}(x,0,\tau)\right| \, d\tau \, dx - \dfrac{2\lambda}{p} \int_{-\infty}^{\infty} \int_0^t \left|u(x,0,\tau)\right|^p \, d\tau \, dx \\
                &\le \|h_x\|_{L^2_{xt}}^2 + \left(\int_{-\infty}^{\infty} \int_0^{\infty} |u(x,y,t)|^2 \, dy \, dx\right)^{\frac{1}{2}} \cdot \left(\int_{-\infty}^{\infty} \int_0^{\infty} |u_y(x,y,t)|^2 \, dy \, dx\right)^{\frac{1}{2}} \\
                &\qquad + \|\varphi\|_{L^2_{xy}} \cdot \|\varphi_y\|_{L^2_{xy}} + \|h\|_{L^2_{xt}} \cdot \|h_t\|_{L^2_{xt}} + \dfrac{2|\lambda|}{p} \|h\|_{L^p_{xt}}^p \\
                &= \|h_x\|_{L^2_{xt}}^2 + \|u\|_{L^2_{xy}} \cdot \|u_y\|_{L^2_{xy}} + \|\varphi\|_{L^2_{xy}} \cdot \|\varphi_y\|_{L^2_{xy}} + \|h\|_{L^2_{xt}} \cdot \|h_t\|_{L^2_{xt}} + \dfrac{2|\lambda|}{p} \|h\|_{L^p_{xt}}^p\, .
            \end{align*}
It is derived from (\ref{5.5}) that
            \begin{align*}
                 \|u(t)&\|_{L^2_{xy}}^2 \le  \|\varphi\|_{L^2_{xy}}^2 + 2 \|h\|_{L^2_{xt}} \left(\int_{-\infty}^{\infty} \int_0^t \left|u_y(x,0,\tau)\right|^2 \, d\tau \, dx\right)^{\frac{1}{2}} \\
                \le& 2 \|h\|_{L^2_{xt}} \cdot \left(\|u\|_{L^2_{xy}} \|u_y\|_{L^2_{xy}} + \|\varphi\|_{L^2_{xy}} \|\varphi_y\|_{L^2_{xy}} + \|h\|_{L^2_{xt}} \|h_t\|_{L^2_{xt}} + \dfrac{2|\lambda|}{p} \|h\|_{L^p_{xt}}^p + \|h_x\|_{L^2_{xt}}^2\right)^{\frac{1}{2}}  + \|\varphi\|_{L^2_{xy}}^2 \\
                \lesssim& \|u(t)\|_{L^2_{xy}}^{\frac{1}{2}} \left(2 \|h\|_{L^2_{xt}} \|u_y\|_{L^2_{xy}}^{\frac{1}{2}}\right) \\
                & + 2 \|h\|_{L^2_{xt}} \left(\|\varphi\|_{L^2_{xy}}^{\frac{1}{2}} \|\varphi_y\|_{L^2_{xy}}^{\frac{1}{2}} + \|h\|_{L^2_{xt}}^{\frac{1}{2}} \|h_t\|_{L^2_{xt}}^{\frac{1}{2}} + \sqrt{\dfrac{2|\lambda|}{p}} \|h\|_{L^p_{xt}}^{\frac{p}{2}} + \|h_x\|_{L^2_{xt}}\right) + \|\varphi\|_{L^2_{xy}}^2 \\
                \le& \dfrac{1}{4} \|u(t)\|_{L^2_{xy}}^2 + \dfrac{3}{4} \left(2 \|h\|_{L^2_{xt}}\right)^{\frac{4}{3}} \|u_y\|_{L^2_{xy}}^{\frac{2}{3}} \\
                & + 2 \|h\|_{L^2_{xt}} \left(\|\varphi\|_{L^2_{xy}}^{\frac{1}{2}} \|\varphi_y\|_{L^2_{xy}}^{\frac{1}{2}} + \|h\|_{L^2_{xt}}^{\frac{1}{2}} \|h_t\|_{L^2_{xt}}^{\frac{1}{2}} + \sqrt{\dfrac{2|\lambda|}{p}} \|h\|_{L^p_{xt}}^{\frac{p}{2}} + \|h_x\|_{L^2_{xt}}\right) + \|\varphi\|_{L^2_{xy}}^2\, ,
            \end{align*}
which yields
\begin{align}  
                \|u(t)\|_{L^2_{xy}}^2 \le &  \|u_y\|_{L^2_{xy}}^{\frac{2}{3}} \left(2 \|h\|_{L^2_{xt}}\right)^{\frac{4}{3}} \nonumber\\
                & + \dfrac{4}{3} \|h\|_{L^2_{xt}} \left(\|\varphi\|_{L^2_{xy}}^{\frac{1}{2}} \|\varphi_y\|_{L^2_{xy}}^{\frac{1}{2}} + \|h\|_{L^2_{xt}}^{\frac{1}{2}} \|h_t\|_{L^2_{xt}}^{\frac{1}{2}} + \sqrt{\dfrac{2|\lambda|}{p}} \|h\|_{L^p_{xt}}^{\frac{p}{2}} + \|h_x\|_{L^2_{xt}}\right) + \dfrac{4}{3} \|\varphi\|_{L^2_{xy}}^2  \, .\label{5.6}
            \end{align}
With $L^{\infty}L^2$-norm of $u$, we then integrate the identity (\ref{5.2}) with respect to $x$, $y$ and $t$. For the sake of simplicity of notations, we  let $C = C\left(\|\varphi\|_{H^1_{xy}}\right)$ be an increasing function depending on the initial data and  $C (t)= C\left(\|h\|_{H^1_{xt}}\right)$ denote an increasing function for the boundary value and the time $t$. Moreover, $C(0)=0$ and they both equal zero for $\| \varphi\|_{H^1_{xy}}=0$ and $\|h\|_{H^1_{xt}}=0$.

First, assume $\lambda<0$. Apply (\ref{5.2}) and (\ref{5.6}) to obtain
            \begin{align*}
                 & \int_{-\infty}^{\infty}  \int_0^{\infty} \left(|u_y(t)|^2+|u_x(t)|^2\right) \, dy \, dx = \int_{-\infty}^{\infty} \int_0^{\infty} \left(|u_y(x,y,0)|^2+|u_x(x,y,0)|^2\right) \, dy \, dx \\
                & \quad + \int_{-\infty}^{\infty} \int_0^{\infty} \dfrac{2\lambda}{p} |u|^p \, dy \, dx - \int_{-\infty}^{\infty} \int_0^{\infty} \dfrac{2\lambda}{p} |u(x,y,0)|^p \, dy \, dx - 2\text{Re} \int_{-\infty}^{\infty} \int_0^t \overline{u_t}(x,0,\tau) u_y(x,0,\tau) \, d\tau \, dx \\
                &\le \|\varphi\|_{H^1_{xy}}^2 + \dfrac{2|\lambda|}{p} \|\varphi\|_{L^p_{xy}}^p + \int_{-\infty}^{\infty} \int_0^t \left|u_y(x,0,\tau)\right|^2 \, d\tau \, dx + \|h_t\|_{L^2_{xy}}^2 \\
                &\le \|u_y\|_{L^2_{xy}} \cdot \|u\|_{L^2_{xy}} + \left(\|\varphi\|_{L^2_{xy}} \cdot \|\varphi_y\|_{L^2_{xy}} + \|h\|_{L^2_{xt}} \cdot \|h_t\|_{L^2_{xt}} + \dfrac{2|\lambda|}{p} \|h\|_{L^p_{xt}}^p + \|h_x\|_{L^2_{xt}}^2 \right. \\
                & \quad \left. + \|\varphi\|_{H^1_{xy}}^2 + \dfrac{2|\lambda|}{p} \|\varphi\|_{L^p_{xy}}^p + \|h_t\|_{L^2_{xy}}^2 \right) \\
                &\le \dfrac{1}{2} \|u_y\|_{L^2_{xy}}^{\frac{4}{3}} \left(2 \|h\|_{L^2_{xt}}\right)^{\frac{4}{3}} + \|u_y\|_{L^2_{xy}}\left(C+C(t)\right) \\
                & \quad + \left(\|\varphi\|_{L^2_{xy}} \cdot \|\varphi_y\|_{L^2_{xy}} + \|h\|_{L^2_{xt}} \cdot \|h_t\|_{L^2_{xt}} + \dfrac{2|\lambda|}{p} \|h\|_{L^p_{xt}}^p + \|h_x\|_{L^2_{xt}}^2 + \|\varphi\|_{H^1_{xy}}^2 + \dfrac{2|\lambda|}{p} \|\varphi\|_{L^p_{xy}}^p + \|h_t\|_{L^2_{xy}}^2 \right) \\
                &= \dfrac{1}{2} \|u_y\|_{L^2_{xy}}^{\frac{4}{3}} \left(2 \|h\|_{L^2_{xt}}\right)^{\frac{4}{3}} + \|u_y\|_{L^2_{xy}}\left(C+C(t)\right) + C + C(t) \le \dfrac{1}{6} \|u_y\|_{L^2_{xy}}^2 + C + C(t)\, .
            \end{align*}
By moving the terms of $\|u_y\|_{L^2_{xy}}^2$ on the right side to the left side, we conclude that $\|u(t)\|_{H^1(\R\times\R^+)}$ is uniformly bounded by the initial and boundary data, i.e.,
            \begin{equation*}
                 \|u(t)\|_{H^1(\R\times\R^+)} \le C + C(t) := \phi_1\, .
            \end{equation*}
One may notice that during the derivation of above inequality with $C$ and $C(t)$, there are some terms involving $\|\varphi\|_{L^p_{xy}}$ and $\|h\|_{L^p_{xt}}$, which can be bounded by their $H^1$-norms using Sobolev embedding
theorem since $H^1$ is embedded in $L^p$ for any $2 \leq p < \infty $ if the domain is $2$-dimensional.

Next, let us consider the case for $\lambda>0$. Here, we need the following Gagliardo-Nirenberg's inequality,
            \begin{equation*}
                \|u(t)\|_{L^p_{xy}} \lesssim \left(\|u_x(t)\|_{L^2_{xy}} + \|u_y(t)\|_{L^2_{xy}}\right)^{1-\frac{2}{p}} \cdot \|u(t)\|_{L^2_{xy}}^{\frac{2}{p}}\, ,
            \end{equation*}
i.e.,
            \begin{equation}
                \int_{\R^2} |u(t)|^p \, dy \, dx \lesssim \left(\|u_x(t)\|_{L^2_{xy}} + \|u_y(t)\|_{L^2_{xy}}\right)^{p-2} \cdot \|u(t)\|_{L^2_{xy}}^2\, .\label{5.7}
            \end{equation}

If no confusion arises, again we denote ``$\lesssim$" by ``$\le$" in the proof.
From (\ref{5.2}) and using the same functions $C$ and $C(t)$ together with (\ref{5.6}) and (\ref{5.7}) and the estimate obtained for the $L^2$-norm of $u_y ( x, 0, t)$, it is shown that
            \begin{align*}
                  \int_{-\infty}^{\infty}& \int_0^{\infty} \left(|u_y(t)|^2+|u_x(t)|^2\right) \, dy \, dx = \int_{-\infty}^{\infty} \int_0^{\infty} \left(|u_y(x,y,0)|^2+|u_x(x,y,0)|^2\right) \, dy \, dx \\
                & + \int_{-\infty}^{\infty} \int_0^{\infty} \dfrac{2\lambda}{p} |u|^p \, dy \, dx - \int_{-\infty}^{\infty} \int_0^{\infty} \dfrac{2\lambda}{p} |u(x,y,0)|^p \, dy \, dx - 2\text{Re} \int_{-\infty}^{\infty} \int_0^t \overline{u_t}(x,0,\tau) u_y(x,0,\tau) \, d\tau \, dx \\
                \le& \|\varphi\|_{H^1_{xy}}^2 + \dfrac{2|\lambda|}{p} \|\varphi\|_{L^p_{xy}}^p + \int_{-\infty}^{\infty} \int_0^t \left|u_y(x,0,\tau)\right|^2 \, d\tau \, dx + \|h_t\|_{L^2_{xy}}^2 + \dfrac{2\lambda}{p} \int_{-\infty}^{\infty} \int_0^{\infty} |u|^p \, dy \, dx \\
                \le& \dfrac{2\lambda}{p} \int_{-\infty}^{\infty} \int_0^{\infty} |u|^p \, dy \, dx + \dfrac{1}{6} \|u_y\|_{L^2_{xy}}^2 + C + C(t) \\
                {\le}& \left(\|u_x\|_{L^2_{xy}} + \|u_y\|_{L^2_{xy}}\right)^{p-2} \cdot \|u(t)\|_{L^2_{xy}}^2 + \dfrac{1}{6} \|u_y\|_{L^2_{xy}}^2 + C + C(t) \\
                {\le}& \left(\|u_x\|_{L^2_{xy}} + \|u_y\|_{L^2_{xy}}\right)^{p-2} \cdot \left(C(t) \|u_y\|_{L^2_{xy}}^{\frac{2}{3}} + C(t) + C\right) + \dfrac{1}{6} \|u_y\|_{L^2_{xy}}^2 + C + C(t) \\
                \le& C(t) \|u_y\|_{L^2_{xy}}^{\frac{2}{3}} \|u_x\|_{L^2_{xy}}^{p-2} + C(t) \|u_y\|_{L^2_{xy}}^{\frac{3p-4}{3}} + \left(\|u_x\|_{L^2_{xy}} + \|u_y\|_{L^2_{xy}}\right)^{p-2} \cdot (C(t) + C) \\
                & \qquad + \dfrac{1}{6} \|u_y\|_{L^2_{xy}}^2 + C + C(t) \\
                \le& \dfrac{1}{6} \|u_y\|_{L^2_{xy}}^2 + C(t) \|u_x\|_{L^2_{xy}}^{{\frac{3(p-2)}{2}}} + C(t) \|u_y\|_{L^2_{xy}}^{\frac{3p-4}{3}} + \dfrac{1}{2} \|u_x\|_{L^2_{xy}}^2 + \dfrac{1}{6} \|u_y\|_{L^2_{xy}}^2
                 + \dfrac{1}{6} \|u_y\|_{L^2_{xy}}^2 + C + C(t)\, ,
            \end{align*}
from which we can see that
            \begin{equation*}
                 \|u(t)\|_{H^1(\R\times\R^+)} \le C + C(t) := \phi_2
            \end{equation*}
if and only if $\frac{3(p-2)}{2} < 2$ and $\frac{3p-4}{3} < 2$, i.e., $3 \le p < \frac{10}{3}$.
Therefore, in this case, $ \sum_{|\alpha|=1} \left\|D^{\alpha}u(t)\right\|_{L^2}$ or $\|u(t)\|_{H^1_{xy}}$ is uniformly bounded by $C+C(t)$.

If $p=\frac{10}{3}$, we have $\frac{3(p-2)}{2} = \frac{3p-4}{3} = 2$. Thus,
\begin{align*}
                \int_{-\infty}^{\infty}&  \int_0^{\infty} \left(|u_y(t)|^2+|u_x(t)|^2\right) \, dy \, dx \\
                &\le \left(2 \|h\|_{L^2_{xt}}\right)^{\frac{4}{3}} \|u_y\|_{L^2_{xy}}^{\frac{2}{3}} \|u_x\|_{L^2_{xy}}^{\frac{4}{3}} + \left(2 \|h\|_{L^2_{xt}}\right)^{\frac{4}{3}} \|u_y\|_{L^2_{xy}}^2 + \left(\|u_x\|_{L^2_{xy}} + \|u_y\|_{L^2_{xy}}\right)^{\frac{4}{3}} \cdot (C(t) + C) \\
                & \qquad + \frac{1}{6} \|u_y\|_{L^2_{xy}}^2 + C + C(t) \\
                &\le \frac{1}{6} \|u_y\|_{L^2_{xy}}^2 + c \|h\|_{L^2_{xt}}^2 \|u_x\|_{L^2_{xy}}^2 + \left(2 \|h\|_{L^2_{xt}}\right)^{\frac{4}{3}} \|u_y\|_{L^2_{xy}}^2 + \frac{1}{2} \|u_x\|_{L^2_{xy}}^2 + \frac{1}{6} \|u_y\|_{L^2_{xy}}^2 \\
                & \qquad + \frac{1}{6} \|u_y\|_{L^2_{xy}}^2 + C + C(t) \\
                &\le \frac{1}{2} \|u_y\|_{L^2_{xy}}^2 + \left(c \|h\|_{L^2_{xt}}^2 + \frac{1}{2}\right) \|h\|_{L^2_{xt}}^2 \|u_x\|_{L^2_{xy}}^2 + c \|h\|_{L^2_{xt}}^2 \|u_y\|_{L^2_{xy}}^2 + C + C(t) \\
                &= \left(c \|h\|_{L^2_{xt}}^2 + \frac{1}{2}\right) \left(\|u_y\|_{L^2_{xy}}^2 + \|u_x\|_{L^2_{xy}}^2\right) + C + C(t)\, ,
            \end{align*}
where $c$ is a fixed positive constant. The above inequality is equivalent to
            \begin{equation*}
                \left(\frac{1}{2} - c \|h\|_{L^2_{xt}}^2\right) \left(\|u_y\|_{L^2_{xy}}^2 + \|u_x\|_{L^2_{xy}}^2\right) \le C + C(t)\, .
            \end{equation*}
Since it is assumed that $\|h\|_{L^2_{xt}(\R \times [0,T])} < \infty$, we can partition $[0,T]$ into a finite number of subintervals $(t_{j-1},t_j)$ for $j=1$, $\cdots$, $m$ with $\sup_j |t_j-t_{j-1}| \le \delta$ such that $\|h\|_{L^2_{xt}(\R \times [t_{j-1},t_j])} < (1/4c)$. Starting with $[0,t_1]$, we can move forward over one subinterval $(t_{j-1},t_j)$ at a time to obtain a uniform bound for $\| u \|_{H^1}$ and then use $u(t_j)$ as the initial value for the solution on $(t_j,t_{j+1})$ to have a uniform bound for $\| u \|_{H^1}$. By repeating the process until reaching $T$, we prove the uniform bound of $\| u \|_{H^1}$ for $ t \in [0, T]$.
\end{proof}

Finally, the following global well-posedness of the IBVP (\ref{1.1}) can be obtained from Theorem \ref{theo4.3} and {Propositions~\ref{prop4.5}} and {\ref{prop5.2}}.
\begin{thm}\label{theo5.3}
Assume that either $3 \le p < \infty$ if $\lambda<0$ or $3 \le p \le \frac{10}{3}$ if $\lambda>0$. Then,
        (\ref{1.1}) is globally well-posed  in $H^1(\R \times \R^+)$ if $\varphi\in H^1(\R \times \R^+)$ and $h\in H^1_{t,\text{loc}} \left(\R; \, L^2_x(\R)\right)$ $\bigcap L^2_{t, \text{loc}} \left(\R; \, H^{\frac{3}{2}}_x(\R)\right)$.
\end{thm}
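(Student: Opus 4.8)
The plan is to run the standard blow-up alternative, feeding the local theory of Section 4 with the \emph{a priori} bound of Proposition~\ref{prop5.2}; the statement preceding the theorem already indicates that this is exactly the combination of Theorem~\ref{theo4.3} and Propositions~\ref{prop4.5} and \ref{prop5.2}. First I would fix an arbitrary $T\in(0,\infty)$ and data $\varphi\in H^1(\R\times\R^+)$ and $h\in H^1_{t,\mathrm{loc}}(\R;L^2_x(\R))\cap L^2_{t,\mathrm{loc}}(\R;H^{3/2}_x(\R))$. On every bounded window $[0,T_0]$ the restriction of $h$ belongs to ${\cal H}^1(0,T_0)$: indeed $H^1_t([0,T_0];L^2_x)\hookrightarrow H^{3/4}_t([0,T_0];L^2_x)$ and $L^2_t([0,T_0];H^{3/2}_x)\hookrightarrow L^2_t([0,T_0];H^{3/2}_x)$, so $h|_{[0,T_0]}\in{\cal W}^1(0,T_0)\subset{\cal H}^1(0,T_0)$ by the inclusion noted after Lemma~\ref{lemm2.2}; moreover $h|_{[0,T_0]}\in H^1\big(\R\times(0,T_0)\big)$ in the $(x,t)$ variables. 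Theorem~\ref{theo4.3}(b) together with Proposition~\ref{prop4.4} then yields a unique maximal solution $u\in\X^1_{T_{\max}}$ with a maximal time $T_{\max}=T_{\max}(\varphi,h)\in(0,\infty]$ such that either $T_{\max}=\infty$ or $\lim_{t\uparrow T_{\max}}\|u(t)\|_{H^1(\R\times\R^+)}=\infty$. Everything reduces to excluding the blow-up alternative whenever $T_{\max}\le T$.

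The second step is to make the \emph{a priori} estimate of Proposition~\ref{prop5.2} applicable to the $H^1$-solution at hand, since that proposition is proved under a ``smooth enough'' hypothesis that legitimizes Lemma~\ref{lemm5.1} and the associated integrations by parts (in particular the boundary traces $u_y(x,0,t)$ and the products $u\,\overline{u_y}$ appearing in (\ref{5.2})--(\ref{5.3})). To transfer the bound I would approximate: choose smooth, compactly supported data $(\varphi_n,h_n)$ with $\varphi_n\to\varphi$ in $H^1(\R\times\R^+)$ and $h_n\to h$ in ${\cal H}^1(0,T_0)$ and in $H^1(\R\times(0,T_0))$ for every $T_0$. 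Since the local existence time in Theorem~\ref{theo4.3} depends only on the size of the data, for $T_0<T_{\max}$ and $n$ large the solutions $u_n$ exist on $[0,T_0]$, and by Proposition~\ref{prop4.5} (applied on a finite chain of subintervals covering $[0,T_0]$) we have $u_n\to u$ in $\X^1_{T_0}$, hence in $C([0,T_0];H^1(\R\times\R^+))$. For the smooth approximants, Proposition~\ref{prop5.2} gives
\[
\sup_{t\in[0,T_0]}\|u_n(t)\|_{H^1(\R\times\R^+)}\le \phi\!\left(\|\varphi_n\|_{H^1(\R\times\R^+)}+\|h_n\|_{H^1(\R\times(0,T_0))}\right)
\]
with $\phi$ nondecreasing and independent of $n$; letting $n\to\infty$ (the right-hand side converges to $\phi(\|\varphi\|_{H^1}+\|h\|_{H^1(\R\times(0,T_0))})$) yields the same bound for $u$ on $[0,T_0]$.

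The third step closes the argument. Suppose, for contradiction, that $T_{\max}\le T$. Applying Step~2 on each $[0,T_0]$ with $T_0<T_{\max}$ and using monotonicity of $\phi$ gives
\[
\sup_{0\le t<T_{\max}}\|u(t)\|_{H^1(\R\times\R^+)}\le \phi\!\left(\|\varphi\|_{H^1(\R\times\R^+)}+\|h\|_{H^1(\R\times(0,T))}\right)<\infty,
\]
which contradicts $\lim_{t\uparrow T_{\max}}\|u(t)\|_{H^1}=\infty$ from Proposition~\ref{prop4.4}. Hence $T_{\max}>T$; as $T$ was arbitrary, the solution exists on all of $[0,\infty)$. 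Uniqueness and continuous dependence on $(\varphi,h)$ in the respective spaces follow from Theorem~\ref{theo4.3}(b) and Proposition~\ref{prop4.5} applied on each bounded interval, so the IBVP (\ref{1.1}) is globally well-posed in $H^1(\R\times\R^+)$ in the sense of Definition~\ref{defi1.1}.

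The step I expect to be the main obstacle is Step~2: the identities in Lemma~\ref{lemm5.1} and the estimates in Proposition~\ref{prop5.2} are only formal for $H^1$-data, so one must carefully justify that the boundary traces $u_y(x,0,t)$, $u_x(x,0,t)$ and the bilinear boundary/interior terms are well defined and stable under the approximation, and that the constants $C$ and $C(t)$ in the proof of Proposition~\ref{prop5.2} depend only on the $H^1$-norms of the data (via Sobolev embedding $H^1\hookrightarrow L^p$ in two dimensions for $\|\varphi\|_{L^p_{xy}}$, $\|h\|_{L^p_{xt}}$) and not on any higher regularity of $\varphi_n,h_n$. In the focusing borderline case $\lambda>0$, $p=10/3$ there is the additional subtlety, already visible in Proposition~\ref{prop5.2}, that the $H^1$-bound is obtained only after partitioning $[0,T]$ into finitely many short subintervals on which $\|h\|_{L^2_{xt}}$ is below the absorption threshold $1/(4c)$; one must check that this partition, and hence the resulting bound, depends only on $\|h\|_{L^2_{xt}(\R\times[0,T])}$, which is finite by hypothesis, so that the conclusion is again uniform over the approximating sequence.
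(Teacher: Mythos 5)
Your proposal is correct and follows essentially the same route as the paper, which simply obtains Theorem \ref{theo5.3} by combining the local theory of Theorem \ref{theo4.3} with the blow-up alternative of Proposition \ref{prop4.4}, the continuous dependence of Proposition \ref{prop4.5}, and the \emph{a priori} $H^1$ bound of Proposition \ref{prop5.2}. Your added details (the embedding of the hypothesis space into ${\cal H}^1(0,T_0)$ via ${\cal W}^1(0,T_0)$, and the smooth-data approximation used to transfer the estimate of Proposition \ref{prop5.2} to $H^1$ solutions) are consistent with, and if anything more careful than, the paper's brief argument.
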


\bigskip
\noindent{\bf Acknowledgements.} Y. Ran and S. M. Sun were partly supported by National Science Foundation under grant No. DMS-1210979.  B.-Y. Zhang was partially supported by grants from  the Simons Foundation (201615) and NSF  of China (11231007, 11571244). The paper is based upon the first part of Y. Ran's Ph.D. dissertation at Virginia Polytechnic Institute and State University.

\addcontentsline{toc}{section}{References}


\begin{thebibliography}{9}


\bibitem{Adams_Sobo}
{R. Adams and J. Fournier},  \emph {Sobolev Spaces}, 2nd edition, {Elsevier Pte Ltd.}, {2003}.

\bibitem{Audiard-1} {C. Audiard}, {Non-homogeneous boundary value problems for linear
dispersive equations},   \emph{Communications in Partial Differential Equations}, {\bf 37} (2012), {1--37}.

\bibitem{Audiard-2} {C. Audiard}, {On the non-homogeneous boundary value problem for Sch\"dingier equations},
 \emph{Discrete and Continuous Dynamical Systems}, {\bf 33}\,(2013), {
 3861--3884}.

\bibitem{Audiard-3} {C. Audiard},
On the boundary value problem for the Schr\"odinger equation: compatibility conditions and global existence, \emph{Anal. PDE}, {\bf 8}\,  (2015), 1113--1143.

\bibitem{Bergh_Interp}
{J. Bergh and J. Lofstrom}, \emph{Interpolation Spaces, an Introduction}, {Springer-Verlag, Berlin},
{1976}.

\bibitem{Sun_KdV_qp}
{J.  Bona and S.  M.  Sun and B.-Y. Zhang},
{A non-homogeneous BVP for the Korteweg-de Vries equation in a quarter plane},
\emph{Trans. AMS} \textbf{354} (2001), {427-490}.

\bibitem{Sun_CUC_Evo}
{J.  Bona, S. M. Sun and B.-Y. Zhang},
{Conditional and unconditional well-posedness for nonlinear evolution equations},
\emph{Adv. Diff. Eqns.} \textbf{9} (2004), {241--265}.


\bibitem{Sun_NSE_1d}
J.  Bona, S. M. Sun and B.-Y. Zhang,
{Nonhomogeneous boundary value problems of one-dimensional nonlinear {Schr\"odinger} equation}, preprint.



\bibitem{bourgain-1}  J. Bourgain, Fourier transform restriction phenomena for certain
lattice subsets and applications to non-linear evolution equations,
part I: Schr\"{o}dinger equations, \emph{Geom. \&  Funct. Anal. }
\textbf{ 3} (1993), 107--156.


\bibitem{bourgain-2} J.  Bourgain, \emph{Global Solutions of Nonlinear
Schr\"odinger Equations}, Colloqium Publication, Vol. 46, American
Mathematical Society, 1999.

\bibitem{brezis} H. Brezis and T. Gallouet, Nonlinear Schr\"odinger evolution equation,
\emph{Nonlinear Anal.} \textbf{4} (1980), 677--681.

\bibitem{bu1994} C. Bu,  An initial-boundary value problem of the nonlinear Schr\"odinger equation.
 \emph{Appl. Anal.}  \textbf{53}  (1994), 241--254.

\bibitem{bu2000} C. Bu, Nonlinear Schr\"odinger equation on the semi-infinite line, \emph{Chinese Annals of Math.}
\textbf{21} (2000), 1--12.

\bibitem{bu2001} C. Bu, R. Shull, H. Wang, and M. Chu, Well-posedness, decay
estimates and blow-up theorem for the forced NLS, \emph{J. Partial
Diff. Eqns.} \textbf{14} (2001), 61--70.

\bibitem{bu2005} C. Bu, K. Tsutaya and C. Zhang, Nonlinear Schr\"odinger equation with
inhomogeneous Dirichlet boundary data, \emph{J. Math. Phys}.,
\textbf{46}\,(2005), 083504.

\bibitem{bc} R. Carroll and C. Bu, Solution of the forced nonlinear Schr\"odinger equation (NLS)
 using PDE techniques, \emph{Appl. Anal.}
\textbf{41} (1991), 33--51.

\bibitem{cazenave} T. Cazenave, \emph{Semilinear Schr\"odinger Equations}, Amer. Math. Soc., Providence, RI, 2003.

\bibitem{CFH2011} T. Cazenave, D. Fang and Z. Han, Continuous dependence for NLS in fractional order spaces, \emph{Ann.
de L'Inst. H. Poincar\'e Analyse Non Lin\'e.} \textbf{28} (2011), 135--147.

\bibitem{caz-weiss} T. Cazenave and F. Weissler, The Cauchy problem for the critical nonlinear Schr\"odinger
equation in $H^s$,\emph{ Nonlinear Anal. TMA} \textbf{14} (1990),
807--836.

\bibitem{CHA2011}
A. Chabchoub, N.  Hoffmann, and N. Akhmediev, Rogue wave observation in a water wave tank, \emph{Phys. Rev. Lett.} \textbf{106} (2011), 204--502.

\bibitem{Weinstein_KdV_DP_SA}
{F. Christ and M. Weinstein},
{Dispersion of small amplitude solutions of the generalized {Korteweg-de Vries} equation},
\emph{J. Funct. Anal.} \textbf{100} {(1991)}, {87--109}.

\bibitem{co-kenig} {J. Colliander and C.  Kenig}, {The generalized KdV equation on the
half-line}, {\em Communications in Partial Differential Equations} {\bf 27}\,(2002), 2187--2266.

\bibitem{Kenig_KdV_HL}
  {J. Colliander and C. Kenig},
   {The generalized {Korteweg-de Vries} Equation on the half line},
    \emph{Comm. Partial Diff. Eqns.} \textbf{27}
   {(2002)}, {2187--2266}.
\bibitem{ET} M. Erdogan and N. Tzirakis, Regularity properties of the cubic nonlinear Schrodinger equation on the half line, {\em Journal of Functional Analysis} {\bf  271} (2016),  2539--2568.
\bibitem{Fokas_NSE_ib}
  {A. Fokas},
   {An initial-boundary value problem for the nonlinear {Schr\"odinger} equation},
    \emph{Physica D} \textbf{35} {(1989)}, {167--185}.

\bibitem{Fokas_igNSE}
    {J. Lenells and A. Fokas},
    {An integrable generalization of the nonlinear {Schr\"odinger} equation on the half-line and solitons},
  \emph{Inverse Problems} \textbf{25}
    {(2009)}, 115006.

\bibitem{gini-velo}  J. Ginibre and G. Velo, On a class of nonlinear Schr\"odinger equations, \emph{J. Funct. Anal.}
\textbf{32 } (1979), 1--71.


\bibitem{HMOW2011}
{H. Hajaiej and L. Molinet and T. Ozawa and B. Wang},
 {Sufficient and necessary conditions for the fractional
Gagliardo-Nirenberg inequalities and applications to Navier-Stokes and
generalized Boson equations},
 \emph{in ``RIMS Kokyuroku Bessatsu B26: Harmonic Analysis and Nonlinear Partial Differential
Equations''},
edited by T. Ozawa, M. Sugimoto,  {(2011)}, {159--175}.


\bibitem{holmer} J. Holmer, The initial-boundary value problem for the 1-d nonlinear Schr\"odinger
equation on the half-line, \emph{Diff. Integral Eqns.}
\textbf{18}\,(2005),  647--668.


\bibitem{Kenig_Diri_IH_Lip}
    {D. Jerison and C. Kenig},
   {The inhomogeneous {Dirichlet} problem in {Lipschitz} domains},
  \emph{J. Funct. Anal.} \textbf{130}
     {(1995)}, {161--219}.

\bibitem{kam} S. Kamvissis, Semiclassical nonlinear Schr\"odinger
on the half line,  \emph{J. Math. Phys}. \textbf{44} (2003),
5849--5868.

\bibitem{kato-2} T. Kato, On nonlinear Schr\"odinger equations, \emph{Ann. Inst. H. Poincar\'e Phys. Theor.}
 \textbf{46}
(1987),  113--129.

\bibitem{kato-1} T. Kato, On nonlinear Scrh\"odinger equations. II. $H^s$-solutions and unconditional
well-posedness, \emph{J. de Analyse Math.}, \textbf{67} (1995),
281--306.

\bibitem{Kenig_DPE_Osc}
  {C. Kenig and G. Ponce and L. Vega},
  {Oscillatory integrals and regularity of {dispersive} equations}, \emph{Indiana Univ. J.} \textbf{40} {(1989)},
  {33--69}.

\bibitem{Kenig_NSE_Sma}
  {C. Kenig and G. Ponce and L. Vega},
  {Small solutions to nonlinear {Schr\"odinger} equations},
   \emph{Ann. Inst. H. Poincar\'e} \textbf{10}
    {(1993)}, {255--288}.


\bibitem{Pazy_semigroup}
     {A. Pazy},
   \emph{Semigroups of Linear Operators and Applications to Partial Differential Equations},
  {Springer-Verlag, Berlin},
     {1992}.

\bibitem{pere1983}
D.  Peregrine,  Water waves, nonlinear Schr\"odinger equations and their solutions, \emph{J. Austral. Math. Soc. B} \textbf{25} (1983), 16--43.


\bibitem{Stein_Singular}
    {E.  Stein},
     \emph{Singular Integrals and Differentiability Properties of Functions},
   {Princeton University Press}, {Princeton, N.J.}, {1970}.


\bibitem{strauss} W. Strauss and C. Bu, Inhomogeneous boundary value problem for a nonlinear
Schr\"odinger equation, \emph{J. Diff. Eqns.} \textbf{173}\,(2001),
79--91.

\bibitem{tsu-1} M. Tsutsumi, On smooth solutions to the initial-boundary value
problem for the nonlinear Schr\"odinger equations in two space
dimensions, \emph{Nonlinear Anal.} \textbf{13 }(1989), 1051--1056.

\bibitem{tsu-2}  M. Tsutsumi, On global solutions to the initial-boundary value
problem for the nonlinear Schr\"odinger equations in exterior
demains, \emph{Comm. Partial Diff. Eqns.} \textbf{16}
(1991), 885--907.

\bibitem{tsu-3} Y. Tsutsumi, Global solutions of the nonlinear Schr\"odinger
equations in exterior domains, C\emph{omm. Partial Diff.
Eqns.} \textbf{8} (1983), 1337--1374.


\bibitem{tsu-4} Y. Tsutsumi, $L^2$-solutions for nonlinear Schr\"odinger equations
and nonlinear groups, \emph{Funk. Ekva.} \textbf{30} (1987),
115--125.


\end{thebibliography}
\end{document}